\newtheorem{lem}{\bf Lemma}[section]
\newtheorem{prop}[lem]{\bf Proposition}
\newtheorem{thm}[lem]{\bf Theorem}
\newtheorem{rmk}[lem]{\bf Remark}
\newtheorem*{rmkA}{\bf Remark}
\newtheorem{cor}[lem]{\bf Corollary}
\newtheorem{hy}[lem]{\bf Hypothesis}
\newtheorem{mainthm}{Theorem}
\newcommand{\ackname}{Acknowledgements}
  \newenvironment{acknowledgement}{%
    \titlepage
    \null\vfil
    \@beginparpenalty\@lowpenalty
    \begin{center}%
      \bfseries \ackname
      \@endparpenalty\@M
    \end{center}}%
  {\par\vfil\null\endtitlepage}
  \newenvironment{acknowledgement}{%
    \if@twocolumn
      \section*{\ackname}%
    \else
      \small
      \begin{center}%
        {\bfseries \ackname\vspace{-0.5em}\vspace{\z@}}%
      \end{center}%
      \quotation
    \fi}
    {\if@twocolumn\else\endquotation\fi}
\def\thanks#1{\protected@xdef\@thanks{\@thanks\protect\footnotetext{#1}}}
\title{Finite groups of non-prime-power order with exactly four character codegrees
\thanks{\textbf{Keywords}\,\, character theory of finite groups, character codegrees.\\
\textbf{2020 MR Subject Classification}\,\, Primary 20C15\\
The first author and the four author are supported by the NSF of China (No. 12171058, 12301018), the Natural Science Foundation of Jiangsu Province (No. BK20231356) and the Natural Science Foundation of the Jiangsu Province Higher Education Institutions of China (No. 23KJB110002).
}}
\author{Yu Zeng, Mehdi Ghaffarzadeh, Mohsen Ghasemi*\thanks{*Corresponding author.}, Dongfang Yang\\
\small{\emph{In memory of Mehdi Ghaffarzadeh who passed away while writing this paper}}
}
\date{}
\begin{document}

\maketitle

\begin{abstract}
For an irreducible complex character $\chi$ of a finite group $G$, the \emph{codegree} of $\chi$ is defined by $|G:\ker(\chi)|/\chi(1)$, where $\ker(\chi)$ is the kernel of $\chi$.
In this paper, we 
give a detailed characterization of finite groups of non-prime-power order 
with exactly four (irreducible) character codegrees.
\end{abstract}

\section{Introduction}

For an irreducible complex character $\chi$ of a finite group $G$,
the \emph{codegree} of $\chi$ is defined as
$$\mathrm{cod}(\chi) =\frac{|G: \ker(\chi)|}{\chi(1)}.$$
This definition was introduced by Qian in \cite{qian2002}
and first systematically studied by Qian, Wang and Wei \cite{qian2007}.

Since the papers by Isaacs and Passman in the 1960s, the influence of the set of (irreducible) character degrees
on the structure of finite groups has been extensively studied.
As a ``dual" concept of the set of character degrees,
the \emph{set of (irreducible) character codegrees} $\mathrm{cod}(G)$ of a finite group $G$ also plays a significant role in
determining the structure of $G$.
In recent years, there has been a growing interest in exploring the structure of finite groups 
with a small number of (irreducible) character codegrees.
Du and Lewis \cite{du2016} demonstrated that a group of prime power order
 with at most three character codegrees has nilpotency class at most $2$.
Alizadeh et al. \cite{alizadeh2019} characterized finite nonnilpotent groups with at most three character codegrees.
Qian and Zeng \cite{qian2023} completed the classification of finite nonnilpotent groups with exactly three character codegrees.
Croome and Lewis \cite{croome2020}, and Moret\'o \cite{moreto2022} characterized groups of prime power order with exactly four character codegrees.
Liu and Yang \cite{liu2021} classified finite nonsolvable groups with exactly four character codegrees.
Recently, Liu and Song \cite{liu2025} characterized certain finite solvable groups with exactly four character codegrees,
and provided a list of possible sets of character codegrees for finite solvable groups 
with exactly four character codegrees.

Building on the classification of finite nonsolvable groups with exactly four character codegrees by Liu and Yang \cite{liu2021}, this paper aims to provide a detailed characterization of finite groups of non-prime-power order that have exactly four character codegrees.

Before stating the main result of this paper, we introduce some notation.
We use the symbol $\mathsf{SmallGroup}(m,i)$ for the $i$-th group of 
the groups of order $m$ in the Small Groups library of $\mathsf{GAP}$ (\cite{gap});
$(\mathsf{C}_{m})^{n}$ for the direct product of $n$ copies of the cyclic group  $\mathsf{C}_{m}$ of order $m$;
$\mathsf{D}_{2^{n}}$, where $n\geq 3$, for the dihedral group of order $2^{n}$;
$\mathsf{Q}_{2^{n}}$, where $n\geq 3$, for the generalized quaternion group of order $2^{n}$;
$\mathsf{ES}(2^{5}_-)$ for the extraspecial $2$-group 
which is a central product of $\mathsf{D}_8$ and 
$\mathsf{Q}_8$;
$\mathbb{F}_{p^n}$, where $p$ is a prime, for the finite field with $p^{n}$ elements.

Throughout the paper, all groups considered are finite and $p,q,r$ always denote primes.

\begin{mainthm}\label{thmA}
   Let $G$ be a finite group of non-prime-power order.
   Then $G$ has exactly four irreducible character codegrees if and only if one of the following holds.
   \begin{description}
	 \item[(1)] $G=P\times Q$ where $P$ is an elementary abelian $p$-group
       and $Q$ is an elementary abelian $q$-group for distinct primes $p$ and $q$.
      \item[(2)] $G$ is a Frobenius group with complement $P \in \mathrm{Syl}_{p}(G)$ and kernel $N\in \mathrm{Syl}_{q}(G)$,
	   and one of the following holds.
	  \begin{description}
		\item[(2a)] $P\cong \mathsf{Q}_8$ and $N\cong (\mathsf{C}_{q})^{2}$.
		\item[(2b)] $P\cong \mathsf{C}_{p^{2}}$, $N\cong (\mathsf{C}_{q})^{td}$ is a homogeneous $P$-module over $\mathbb{F}_q$ where
		 $t$ is a positive integer and $d$ is the multiplicative order of $q$ modulo $|P|$.
		 \item[(2c)] $P\cong \mathsf{C}_{p}$, $N$ is an abelian group of exponent $q^{2}$, and all $G$-chief factors in $N$ are isomorphic as $P$-modules over $\mathbb{F}_q$.
		 \item[(2d)] $P\cong \mathsf{C}_{p}$, $N$ is an elementary abelian $q$-group, and there are exactly two non-isomorphic $P$-modules over $\mathbb{F}_q$ among all $G$-chief factors in $N$. 
		 \item[(2e)] $P\cong \mathsf{C}_{p}$, $N/\ker(\theta^{G})$ is an ultraspecial $q$-group of order $q^{3d}$ for each nonlinear $\theta\in \mathrm{Irr}(N)$ where $d$ is the multiplicative order of $q$ modulo $|P|$.
		 Also, either $N/N'$
			is an abelian group of exponent $q^{2}$ and all $G$-chief factors in $N/N'$ are isomorphic as a $P$-module,
            or $N/N'$ is an elementary abelian $q$-group and there are exactly two non-isomorphic $P$-modules among all $G$-chief factors in $N/N'$.
		 \item[(2f)] $P\cong \mathsf{C}_{p}$, $N$ has nilpotency class at least $2$, and $N/N'$ is a homogeneous $P$-module over $\mathbb{F}_q$. Also, for each nonlinear $\theta\in \mathrm{Irr}(N)$, 
		 there exists a positive integer $k$ such that
		 $|N:\ker(\theta^G)|/\theta(1)=q^{k}>q^{d}$ where $d$ is the multiplicative order of $q$ modulo $|P|$.
	  \end{description}
	  \item[(3)] $G=N \rtimes P$ where $P\cong \mathsf{C}_{p}$ and $N$ is a semi-extraspecial $q$-group such that $N'=\mathbf{Z}(G)$. 
	  Further, $G/N'$ is a Frobenius group with kernel $N/N'$, 
	  and
	  $N/N'$ is a homogeneous $P$-module over $\mathbb{F}_q$. 
	  \item[(4)] $G=N \rtimes P$ such that $\mathbf{C}_{P}(x)$ is a non-normal subgroup of order $2$ of $P$ for each nontrivial $x \in N$, and one of the following holds.
	  \begin{description}
		 \item[(4a)] $P\cong \mathsf{D}_{8}$ and $N\cong (\mathsf{C}_{3})^{2}$.
		 \item[(4b)] $P\cong \mathsf{SmallGroup}(16,13)$ and $N\cong (\mathsf{C}_{5})^{2}$. 
		 \item[(4c)] $P\cong \mathsf{ES}(2^{5}_{-})$ and $N\cong (\mathsf{C}_{3})^{4}$.
	  \end{description}
	  \item[(5)] $G=H\times C$ 
	  where $H$ is a Frobenius group with complement $P_0\cong \mathsf{C}_{p}$ and kernel $N$
	  such that $N$ is a homogeneous $P_0$-module over $\mathbb{F}_q$,
	  and $C$ is an elementary abelian $p$-group.
	  \item[(6)] $G$ has a normal series $1\lhd V\lhd K\lhd G$ such that $G/V$ is a Frobenius group with complement of order $p$ and cyclic kernel $K/V$
	of prime order $q=\frac{r^{pm}-1}{r^m-1}$,
	and $K$ is a Frobenius group with elementary abelian kernel $V$ of order $r^{pm}$ such that $V$ is minimal normal in $K$.
	  \item[(7)] $G$ is isomorphic to $\mathrm{SL}_2(2^{f})$ for $f\geq 2$.
   \end{description}
\end{mainthm}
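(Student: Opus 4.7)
The plan is to establish both directions by cases, splitting the necessary direction between nonsolvable and solvable groups.

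For sufficiency, I would verify case by case that each group in (1)--(7) has exactly four character codegrees. Case (1) is immediate: $\cod((\mathsf{C}_p)^a \times (\mathsf{C}_q)^b) = \{1,p,q,pq\}$. For the Frobenius-type constructions in (2), (3), and (5) I would invoke the standard description of characters of a Frobenius group $G = N \rtimes P$: every $\chi \in \Irr(G)$ is either lifted from $G/N$ or is of the form $\theta^{G}$ for a nonlinear $\theta \in \Irr(N)$. This reduces the codegree computation to the codegrees of $P$ (read off in $G/N$) together with the numbers $|N:\ker(\theta^{G})|/\theta(1)$ for $\theta\in\Irr(N)$ nonlinear, so the module-theoretic conditions stated in (2b)--(2f) translate exactly into counting the distinct values obtained. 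Case (4) is checked directly from the three explicit small groups listed; case (6) from its normal series and the Singer-cycle action on $V$; case (7) from the well-known character table of $\mathrm{SL}_2(2^f)$, whose codegrees are $1$, $2^{f}(2^{f}-1)$, $2^{f}(2^{f}+1)$ and $2^{2f}-1$.

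For necessity, if $G$ is nonsolvable then by Liu and Yang \cite{liu2021} only $\mathrm{SL}_2(2^f)$ can arise under the non-prime-power hypothesis, giving (7). Assume now $G$ is solvable. By Liu and Song \cite{liu2025}, the set $\cod(G)$ lies in a known finite list of candidate four-element sets. For each candidate set I would reconstruct $G$ as follows. First, the codegrees coming from linear characters identify the cyclic quotients of $G/G'$, which pins down $|G/G'|$ and its prime divisors; the inclusion $\cod(G/N)\subseteq \cod(G)$ for any normal $N$ then forces $G$ to have very few proper normal quotients, which typically places a Frobenius or Camina-type structure on $G$. Second, once a Frobenius decomposition $G = N \rtimes P$ has been extracted, Clifford theory applied to the action of $P$ on $\Irr(N)$ together with the codegree constraint controls (a) the isomorphism type of the complement $P$ (cyclic, generalized quaternion, or one of the three $2$-groups in (4)), (b) the $\mathbb{F}_{q}P$-module structure of $N/N'$, and (c) the nilpotency class and the (semi-)extraspecial or ultraspecial nature of $N$ when $N$ is nonabelian. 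Case (1) emerges when both Sylow subgroups are normal and abelian; case (5) is distinguished from (2) by a nontrivial center intersecting both factors; case (6) is recognized by the presence of a double Frobenius structure and the arithmetic identity $q=(r^{pm}-1)/(r^{m}-1)$ forced by the Singer-cycle action on the chief factor $V$.

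The main obstacle is the fine analysis inside case (2), especially separating (2e) and (2f) from case (3). There $N$ is nonabelian and one must infer from only a handful of codegrees whether $N$ is semi-extraspecial with $N'=\Z(G)$, or whether it merely satisfies the weaker condition $|N:\ker(\theta^{G})|/\theta(1) = q^{k} > q^{d}$ for every nonlinear $\theta\in\Irr(N)$. This requires combining module theory over $\mathbb{F}_q$ (to control the number of non-isomorphic $P$-composition factors in $N/N'$) with detailed character theory of class-two $q$-groups (to read off the degrees of the nonlinear characters of $N$ and their behavior under induction to $G$). Case (4) is also delicate: one must classify the $2$-groups $P$ admitting a faithful action on an elementary abelian $r$-group with $\C_{P}(x)$ a non-normal subgroup of order $2$ for every nontrivial $x$, and show that only $\mathsf{D}_{8}$, $\mathsf{SmallGroup}(16,13)$ and $\mathsf{ES}(2^{5}_{-})$ survive together with the listed kernels.
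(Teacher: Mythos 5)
Your plan has two genuine gaps. First, in the sufficiency direction you propose to treat (2), (3) and (5) uniformly via the character theory of Frobenius groups, asserting that every $\chi\in\Irr(G)$ is either lifted from $G/N$ or induced from a nonlinear $\theta\in\Irr(N)$. This fails precisely in case (3): there $N'=\Z(G)>1$, so $\C_N(P)\neq 1$ and $G=N\rtimes P$ is \emph{not} a Frobenius group (only $G/N'$ is). In that case every nonlinear $\theta\in\Irr(N|N')$ is $G$-invariant (it lies over a central linear character of $N'=\Z(G)$ and has degree $\sqrt{|N:N'|}$ by semi-extraspeciality), so the relevant characters of $G$ are \emph{extensions} of $\theta$, not inductions, and the fourth codegree is $pq\sqrt{|N:N'|}$; an induction-based computation would lose the factor $p$ and would not even produce irreducible characters. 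The paper handles this via the semi-extraspecial criterion (Lemma \ref{lem: semi-extra}) and the analysis of $P$-invariant characters in Proposition \ref{prop: height=2, |cod(G/N)|=2, C=1, not Frob}; your plan as written does not account for it. (Case (5) is also not Frobenius, though there a direct-product codegree argument, as in Lemma \ref{lem: direct product, kernel and codegrees}, repairs it easily.)

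Second, and more seriously, the necessity direction for solvable $G$ is organized around the Liu--Song list of candidate codegree sets, with the group to be ``reconstructed'' from each set. This cannot carry the weight of the classification: the candidate sets are parametric and do not separate the cases --- (2c), (2d) and (2e) all have $\cod(G)=\{1,p,q^{d},q^{2d}\}$, while for (2f) the set is $\{1,p,q^{d},q^{k}\}$ with $k$ undetermined (the authors state explicitly that they could not compute $\cod(G)$ in case (2f)), so knowing the shape of $\cod(G)$ does not pin down $N$, its class, or its module structure. The actual proof instead bounds the Fitting height by $h(G)\le|\cod(G)|-1=3$ (Lemma \ref{lem: qz}), splits by $h(G)$ and by $|\cod(G/N)|$ for $N$ the nilpotent residual, and then runs inductive reductions on $G$-invariant subgroups together with specific machinery you do not identify: the large-orbit Lemma \ref{lem: large orbit}, the Isaacs--Passman classification of $\tfrac12$-transitive actions for case (4), Glauberman correspondence and Lewis's semi-extraspecial characterization for case (3), and the two-step Frobenius arithmetic $q=(r^{pm}-1)/(r^{m}-1)$ for case (6). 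Your sketch acknowledges these difficulties (``typically places a Frobenius or Camina-type structure'') but supplies no argument that would actually force them, so the core of the theorem remains unproved in your proposal.
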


\begin{rmkA}
	{\rm We list below the $\mathrm{cod}(G)$ for the groups $G$ appearing in (1)-(7) of Theorem \ref{thmA}, 
	together with concrete examples for the two sub-cases (2e) and (2f). Denote by $d$ the multiplicative order of $q$ modulo $|P|$. 
\begin{description}
	\item[$\bullet$] (1) $\mathrm{cod}(G)=\{ 1,p,q,pq \}$;
	(2a) $\mathrm{cod}(G)=\{ 1,2,4,q^{2} \}$;
	(2b) $\mathrm{cod}(G)=\{ 1,p,p^{2},q^{d} \}$;
	(2c)-(2e) $\mathrm{cod}(G)=\{ 1,p,q^{d},q^{2d} \}$;
	(2f) $\mathrm{cod}(G)=\{ 1,p,q^{d}, q^{k}\}$ for some $k>d$;
	(3)  $\mathrm{cod}(G)=\{ 1,p,q^{d}, pq \sqrt{|N:N'|} \}$; 
	(4a) $\mathrm{cod}(G)=\{ 1,2,4, 18 \}$;
	(4b) $\mathrm{cod}(G)=\{ 1,2,8, 50 \}$;
	(4c) $\mathrm{cod}(G)=\{ 1,2,8, 162 \}$;
	(5) $\mathrm{cod}(G)=\{ 1,p,q^{d},pq^{d} \}$;
	(6) $\mathrm{cod}(G)=\{ 1,p,q, pr^{pm} \}$;
	(7) $\mathrm{cod}(G)=\{ 1, 2^{2f}-2^{f},2^{2f}+2^{f}, 2^{2f}-1 \}$.
	In fact, we have not been able to determine the precise structure of the groups listed in (2e) and (2f), nor have we been able to determine $\mathrm{cod}(G)$ for groups $G$ satisfying (2f).
  \item[$\bullet$]   An example for (2f):
  Let $S=\mathrm{SU}_3(8)$ and $N\in \mathrm{Syl}_{2}(S)$.
  Then $\mathbf{N}_{S}(N)=N \rtimes C$ where $C\cong \mathsf{C}_{63}$ and $N$ is a Suzuki $2$-group of B-type of order $2^{9}$.
  Let $P\in \mathrm{Syl}_{7}(C)$.
  Then $G:=N \rtimes P$ satisfies (2f).
  In particular, all $G$-chief factors in $N$ are isomorphic as a $P$-module over $\mathbb{F}_2$.
  \item[$\bullet$] Two examples for (2e): 
  
 (i) Let $p=2^{d}-1$ be a Mersenne prime for some odd $d$,
	let $N$ be a Suzuki $2$-group of C-type of order $2^{3d}$, and let $P\cong \mathsf{C}_{p}$ be
	a group acting transitively on the set of involutions of $N$.
	Then $N$ is an ultraspecial $2$-group such that $N/N'=U\times W$, where $U$ and $W$ are non-isomorphic faithful 
	$d$-dimensional irreducible $P$-modules (see e.g. \cite{higman1963,lewis2017}).
	So, $G:=N \rtimes P$ satisfies (2e) with elementary abelian $N/N'$.

	(ii) Let $G_0=N_0  \rtimes P$ be an example for (2f) described above,
	and let $G_1=C \rtimes P$ be a Frobenius group with complement $P$ and kernel $C\cong (\mathsf{C}_{4})^6$.
	Note that $C$ can be chosen such that every $G_0$-chief factor in $N_0$ is isomorphic, as a $P$-module over $\mathbb{F}_2$, 
	to every $G_1$-chief factor in $C$.
	Let $\widetilde{G}=\widetilde{N} \rtimes P$ be a Frobenius group with kernel $\widetilde{N}=N_0\times C$.
	Then every $\widetilde{G}$-chief factor in $\widetilde{N}$ is isomorphic as a $P$-module over $\mathbb{F}_2$.
    Let $N$ be the subdirect product of $N_0$ and $C$ obtained by identifying the $P$-modules
	$N_0/N_0'$ with $C/\Phi(C)$ (see \cite[Kapitel I, 9.11 Satz]{huppertgrouptheory}).
	Then $N$ is a $P$-invariant subgroup of $\widetilde{N}$.
	So, $G:=N \rtimes P$ satisfies (2e) with $N/N'$ having exponent $4$. 
\end{description}
	}
\end{rmkA}

The paper is organized as follows: in Section 2, we collect auxiliary results; in Section 3, we study finite solvable groups with Fitting height 2 having exactly four character codegrees; in Section 4, we
classify finite solvable groups with Fitting height 3 having exactly four character codegrees;
in Section 5, we prove Theorem \ref{thmA}.

\section{Auxiliary results}

Throughout the paper, we follow the standard conventions of \cite{huppertgrouptheory} for group theory and \cite{isaacs1994} for character theory; 
and for $n\in\mathbb N$ and $p$ a prime, we write $n_{p}$ for the largest $p$-power dividing $n$.  
For a finite group $G$, we denote by $G^{\sharp}$ the set of nontrivial elements of $G$, write $\pi(G)$ for the set of primes dividing $|G|$, and let $\exp(G)$ denote the exponent of $G$.  When $N\unlhd G$ and $\theta\in\mathrm{Irr}(N)$, we identify $\chi\in \mathrm{Irr}(G/N)$ with its inflation and view $\mathrm{Irr}(G/N)$ as a subset
of $\mathrm{Irr}(G)$,
denote by $\mathrm{Irr}(G|\theta)$ the set of irreducible characters of $G$ lying over $\theta$; and by $\mathrm{Irr}(G|N)$ we mean the complement of $\mathrm{Irr}(G/N)$ in $\mathrm{Irr}(G)$,
while $\mathrm{Irr}(G)^{\sharp}$ stands for $\mathrm{Irr}(G|G)$.  
Finally,  $\mathrm{cod}(G):=\{ \mathrm{cod}(\chi) : \chi\in \mathrm{Irr}(G) \}$ is the set
of (irreducible) character codegrees of $G$; and for $N\unlhd G$,
$\mathrm{cod}(G|N):=\{ \mathrm{cod}(\chi):\chi\in \mathrm{Irr}(G|N) \}$.
Other notation will be recalled or defined when necessary.

A nonabelian $p$-group $G$ is called \emph{special} if $G'=\mathbf{Z}(G)=\Phi(G)$.  
If, in addition, $|G'|=p$, then $G$ is said to be \emph{extraspecial}.  
When a special $p$-group $G$ further satisfies the requirement that every quotient by a maximal subgroup of its derived subgroup is extraspecial, it is termed \emph{semi-extraspecial}.
In this case, one automatically has $|G'|\le\sqrt{|G:G'|}$.  
Finally, a semi-extraspecial $p$-group $G$ whose derived subgroup attains this upper bound, i.e.\ $|G'|=\sqrt{|G:G'|}$, is called \emph{ultraspecial}.

Now, we start this section with a characterization of semi-extraspecial $p$-groups.

\begin{lem}\label{lem: semi-extra}
    Let $G$ be a $p$-group of nilpotency class $2$.
	Then 
	the following are equivalent.
	\begin{description}
		\item [(1)] $G$ is a semi-extraspecial $p$-group.
		\item [(2)] $\chi(1)=\sqrt{|G:G'|}$ for every $\chi \in \mathrm{Irr}(G|G')$.
		\item [(3)] $|\mathrm{Irr}(G|G')|=|G'|-1$.
	\end{description}
\end{lem}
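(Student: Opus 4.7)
The plan is to establish $(1)\Rightarrow(2)$, $(2)\Leftrightarrow(3)$, and $(2)\Rightarrow(1)$. Since $G$ has class $2$, $G' \le \Z(G)$, so for each $\chi \in \Irr(G)$ there is a unique $\lambda \in \Irr(G')$ with $\chi_{G'} = \chi(1)\lambda$, and $\chi \in \Irr(G\mid G')$ if and only if $\lambda \neq 1_{G'}$. Moreover $G' \le \Z(\chi)$ (since $|\chi(g)| = \chi(1)$ for $g \in G'$), which gives the universal bound $\chi(1)^2 \le |G:\Z(\chi)| \le |G:G'|$.

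For $(2)\Leftrightarrow(3)$, a direct count suffices: comparing $|G| = \sum_\chi \chi(1)^2$ with the contribution $|G/G'|$ from the linear characters yields $\sum_{\chi \in \Irr(G\mid G')}\chi(1)^2 = |G/G'|(|G'|-1)$. Combined with the universal bound, this forces $|\Irr(G\mid G')| \ge |G'|-1$, with equality exactly when every $\chi \in \Irr(G\mid G')$ attains $\chi(1) = \sqrt{|G:G'|}$, which is precisely the content of (2) versus (3).

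For $(1)\Rightarrow(2)$, I would first verify two structural facts about a semi-extraspecial $G$: that $\Z(G) = G'$ (because $\Z(G)/M \le \Z(G/M) = (G/M)' = G'/M$ for every maximal subgroup $M$ of $G'$), and that $G'$ is elementary abelian (first, $G/G'$ is elementary abelian as a quotient of an extraspecial group modulo its commutator subgroup; then the class-$2$ commutator map $[g,\cdot]\colon G/G' \to G'$ is a homomorphism whose image has exponent $p$, so $G'$, being generated by commutators, has exponent $p$). With $G'$ elementary abelian, the kernel $M$ of the nontrivial $\lambda$ associated to any $\chi \in \Irr(G\mid G')$ has index $p$ in $G'$, and $\chi$ is inflated from a nonlinear irreducible character of the extraspecial group $G/M$, forcing $\chi(1) = \sqrt{|G:G'|}$. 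For the converse $(2)\Rightarrow(1)$, fix any maximal $M \le G'$, pick $\lambda \in \Irr(G')$ with $\ker\lambda = M$ and any $\chi \in \Irr(G\mid\lambda)$, and let $\bar\chi$ be its inflation to $\bar G := G/M$. Hypothesis (2) gives $\bar\chi(1)^2 = |\bar G:\bar G'|$, which combined with $\bar\chi(1)^2 \le |\bar G:\Z(\bar\chi)|$ and the chain $\bar G' \le \Z(\bar G) \le \Z(\bar\chi)$ pins down $\Z(\bar G) = \bar G'$, so $\bar G$ is extraspecial.

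The only delicate point is showing that $G'$ is elementary abelian in the $(1)\Rightarrow(2)$ step; the remaining arguments reduce cleanly to a character-degree count combined with the universal inequality $\chi(1)^2 \le |G:\Z(\chi)|$.
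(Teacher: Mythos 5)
Your proof is correct, but it takes a genuinely different route from the paper's. The paper only argues the equivalence of (2) and (3) directly: since $G'\le \mathbf{Z}(G)$, each nontrivial $\lambda\in\mathrm{Irr}(G')$ is $G$-invariant, and (3) is equivalent to every such $\lambda$ being fully ramified with respect to $G/G'$ (via Isaacs, Problem 6.3 and Lemma 2.29); the equivalence with (1) is then quoted from Theorems 1 and 2 of Lewis's semi-extraspecial paper. You instead give a self-contained argument: your count $\sum_{\chi\in\mathrm{Irr}(G|G')}\chi(1)^2=|G:G'|\,(|G'|-1)$ combined with the bound $\chi(1)^2\le|G:\mathbf{Z}(\chi)|\le|G:G'|$ gives (2)$\Leftrightarrow$(3) just as well (it is the fully-ramified criterion in counting form), your verification that $G'$ is elementary abelian lets every $\chi\in\mathrm{Irr}(G|G')$ factor through an extraspecial quotient $G/M$ with $M$ maximal in $G'$, proving (1)$\Rightarrow$(2), and the converse is your equality-forcing chain $\bar\chi(1)^2\le|\bar G:\mathbf{Z}(\bar\chi)|\le|\bar G:\mathbf{Z}(\bar G)|\le|\bar G:\bar G'|$. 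What your approach buys is independence from Lewis's theorems; what the paper's buys is brevity. Two small points are worth making explicit because the paper's definition of semi-extraspecial requires $G$ to be special: in (2)$\Rightarrow$(1), after pinning down $\mathbf{Z}(G/M)=(G/M)'$ of order $p$ you still need $\Phi(G/M)=(G/M)'$, which is automatic since $[x^p,y]=[x,y]^p=1$ forces $p$-th powers into the centre; and specialness of $G$ itself then follows because $\mathbf{Z}(G)M/M\le\mathbf{Z}(G/M)=G'/M$ and $\Phi(G)M/M=\Phi(G/M)=G'/M$ for every maximal $M\le G'$. These are routine and do not affect the correctness of your argument.
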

\begin{proof}
	Since $G$ is of nilpotency class $2$, we have $G'\leq \mathbf{Z}(G)$.
	Hence, every character in $\mathrm{Irr}(G')$ is $G$-invariant and, 
	for distinct $\alpha,\beta\in \mathrm{Irr}(G')$,
	$\mathrm{Irr}(G|\alpha)\cap \mathrm{Irr}(G|\beta)=\varnothing$.
	Therefore, $|\mathrm{Irr}(G|G')|=|G'|-1$ 
    if and only if 
	$\lambda$ is fully ramified with respect to $G/G'$ for
	every character $\lambda\in \mathrm{Irr}(G')^\sharp$,
	or equivalently $\chi(1)=\sqrt{|G:G'|}$ for every $\chi \in \mathrm{Irr}(G|G')$,
	or equivalently $\chi$ vanishes on $G-G'$ for every $\chi \in \mathrm{Irr}(G|G')$ (see \cite[Problem 6.3 and Lemma~2.29]{isaacs1994}). 
	The result now follows from \cite[Theorems 1, 2]{lewis2017}.
\end{proof}

\subsection{Results on character codegrees}

We begin by recalling some well-known facts about character codegrees which will be employed freely in the following.

\begin{lem} \label{lem: basic facts on codegree}
	Let $G$ be a finite group and $\chi\in \mathrm{Irr}(G)$.
	\begin{description}
		\item[(1)] If $N$ is a normal subgroup of $G$ contained in $\ker(\chi)$,
			then the codegrees of $\chi$ in $G$ and in $G/N$ coincide.
		\item[(2)] If $M$ is a subnormal subgroup of $G$, then $\mathrm{cod}(\psi)\mid\mathrm{cod}(\chi)$ for every irreducible constituent $\psi$ of $\chi_M$.
		\item[(3)] If a prime $p$ divides $|G|$, then $p$ divides
			$\mathrm{cod}(\chi)$ for some $\chi\in \mathrm{Irr}(G)$.
			\item[(4)] $|G:\ker(\chi)|\leq \mathrm{cod}(\chi)^{2}$, with equality if and only if 
		$\chi=1_G$.	
	\end{description}
\end{lem}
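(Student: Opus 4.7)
These four facts are standard in codegree theory---all appear explicitly in Qian--Wang--Wei \cite{qian2007}---so I only outline the strategy.

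Item (1) is immediate from the definition of inflation: if $N\le\ker(\chi)$, then $\chi$ factors as a character of $G/N$ of degree $\chi(1)$ and kernel $\ker(\chi)/N$, and $|(G/N):(\ker(\chi)/N)|=|G:\ker(\chi)|$. Item (4) is a one-line application of the sum-of-squares identity to the faithful irreducible character $\chi$ of $H:=G/\ker(\chi)$: the inequality $\chi(1)^{2}\le|H|$ is immediate, with equality iff $\Irr(H)=\{1_{H}\}$, i.e.\ $\chi=1_{G}$. Dividing by $\chi(1)^{2}$ rearranges this to $|G:\ker(\chi)|\le\cod(\chi)^{2}$.

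For (2), an induction along a subnormal series $M=M_{0}\lhd M_{1}\lhd\cdots\lhd M_{n}=G$ (picking a suitable irreducible constituent of $\chi_{M_{i}}$ at each step) reduces the claim to the normal case $M\lhd G$. Appealing to (1), we may replace $G$ by $G/\ker(\chi)$ and assume $\chi$ is faithful, so $\cod(\chi)=|G|/\chi(1)$. Let $T=I_{G}(\psi)$ and take the Clifford correspondent $\varphi\in\Irr(T|\psi)$ with $\chi=\varphi^{G}$, so that $\varphi_{M}=e\psi$ for the ramification index $e=\varphi(1)/\psi(1)$. A direct manipulation of the definitions yields
\[
\frac{\cod(\chi)}{\cod(\psi)}\;=\;\frac{|T:M|\cdot|\ker(\psi)|}{e},
\]
and the standard Clifford fact $e\mid|T:M|$ shows that this ratio is a positive integer.

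For (3), induct on $|G|$ and fix a minimal normal subgroup $N$. If $p\nmid|N|$, apply induction to $G/N$ and lift through (1). If $p\mid|N|$ and $N$ is abelian, then $N$ is elementary abelian of exponent $p$, any nontrivial $\lambda\in\Irr(N)$ has $\cod(\lambda)=p$, and (2) propagates this to any $\chi\in\Irr(G|\lambda)$. If instead $N=S^{k}$ with $S$ nonabelian simple and $p\mid|S|$, then the identity $|S|=\sum_{\eta}\eta(1)^{2}$ forbids every nontrivial $\eta\in\Irr(S)$ from satisfying $\eta(1)_{p}=|S|_{p}$---else $|S|_{p}^{2}\mid|S|-1$, which together with $|S|_{p}\mid|S|$ forces $|S|_{p}\mid\gcd(|S|,|S|-1)=1$, contradicting $p\mid|S|$. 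Taking such an $\eta$, tensoring with trivial characters on the other factors, and invoking (2), we obtain a $\chi\in\Irr(G)$ with $p\mid\cod(\chi)$. The principal technical ingredient is the Clifford-theoretic bookkeeping in (2); the remaining parts are either immediate or reduce to the elementary counting just described.
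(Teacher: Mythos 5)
Your proposal is correct and follows the same route as the paper: part (4) is exactly the paper's own computation ($|G:\ker(\chi)|=\chi(1)\,\mathrm{cod}(\chi)\leq \mathrm{cod}(\chi)^{2}$, with equality only for $\chi=1_G$), while for (1)--(3) the paper simply cites \cite[Lemma 2.1]{liang16}, and the arguments you supply are the standard ones behind that citation. In particular your reduction of (2) to the normal, faithful case, the identity $\mathrm{cod}(\chi)/\mathrm{cod}(\psi)=|T:M|\cdot|\ker(\psi)|/e$ with $e\mid |T:M|$, and the minimal-normal-subgroup induction with the counting argument for the nonabelian simple case in (3) are all sound, so there is no gap.
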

\begin{proof}
		We refer to \cite[Lemma 2.1]{liang16} for the proofs of parts (1), (2) and (3).
       
        For part (4), observe that
$|G:\ker(\chi)|=\chi(1)\cdot\mathrm{cod}(\chi)\le\mathrm{cod}(\chi)^{2}$,
and equality forces $\chi(1)=\mathrm{cod}(\chi)=|G:\ker(\chi)|^{1/2}$, which occurs exactly when $\chi=1_{G}$.
\end{proof}

Let a finite group $A$ act via automorphisms on a finite group $G$.
We say that $A$ acts \emph{Frobeniusly} on $G$ if  
$g^{a}\neq g$ whenever $g\in G^\sharp$ and $a\in A^\sharp$ (see e.g. \cite[Page 177]{isaacsgrouptheory}).

\begin{lem}[\mbox{\cite[Theorem A]{qian2007}}]\label{lem: qww}
	Let $G$ be a finite nonabelian group of order divisible by $p$.
	If  $p$ divides
	no member in $\mathrm{cod}(G|G')$, then $P\in \mathrm{Syl}_{p}(G)$ acts Frobeniusly on $G'$.
\end{lem}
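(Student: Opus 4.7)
The plan is to prove the contrapositive: assume some $1\ne y\in P$ centralizes some $1\ne x\in G'$, and exhibit a nonlinear $\chi\in\mathrm{Irr}(G)$ with $p\mid \mathrm{cod}(\chi)$. The key arithmetic observation is the equivalence
\[
p\nmid \mathrm{cod}(\chi)\ \Longleftrightarrow\ \chi(1)_{p}=|G:\ker(\chi)|_{p},
\]
so the task reduces to finding a nonlinear $\chi$ whose degree fails to absorb the full $p$-part of $|G/\ker(\chi)|$. I would proceed by induction on $|G|$: by Lemma~\ref{lem: basic facts on codegree}(1) the hypothesis is inherited by every quotient, so if $K\unlhd G$ is nontrivial with $x,y\notin K$ and $K<G'$, applying induction to $G/K$ already yields the desired character. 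Iterating, one may assume $G'$ has a unique minimal normal subgroup $N$ of $G$, and $x\in N$; this $N$ is either elementary abelian or a direct product of isomorphic nonabelian simple groups.

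If $N$ is an elementary abelian $p$-group, the sub-case is immediate: for any $\lambda\in\mathrm{Irr}(N)^{\sharp}$ and any constituent $\chi$ of $\lambda^{G}$, Lemma~\ref{lem: basic facts on codegree}(2) gives $p=\mathrm{cod}(\lambda)\mid \mathrm{cod}(\chi)$, and $\chi$ is nonlinear since $N\le G'$. The main work is when $N$ is elementary abelian of characteristic $q\ne p$. Viewing $N$ as an $\mathbb{F}_{q}[G]$-module, because $y$ has the nonzero fixed vector $x$, a Brauer-permutation (or duality) argument yields a nontrivial $\lambda\in\mathrm{Irr}(N)$ with $\lambda^{y}=\lambda$; in particular $y\in T:=I_{G}(\lambda)$ and $|G:T|_{p}<|P|$. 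Clifford's theorem gives $\chi=\psi^{G}$ for some $\psi\in\mathrm{Irr}(T\mid\lambda)$, and every such $\chi$ is nonlinear. Choosing $\psi$ of minimal $p$-part above $\lambda$ — the critical input being a $p'$-extension of $\lambda$ to a Sylow-$p$ stabilizer in $T/N$, provided by coprime-action cohomology — then forces $\chi(1)_{p}<|G:\ker(\chi)|_{p}$, giving $p\mid \mathrm{cod}(\chi)$. The nonabelian-simple case is handled analogously, using Schreier's conjecture (via CFSG) to locate a nonlinear character of $N$ whose $G$-orbit has $p$-coprime length.

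The main obstacle is the selection of $\psi$ in the elementary abelian case with $q\ne p$: the short orbit of $\lambda$ only guarantees $|G:T|_{p}<|P|$, and one must control the degree contribution from $T$ itself so that it does not silently restore the lost $p$-factor. This is where applying the induction hypothesis to $T/\mathrm{core}_{G}(\lambda)$, or directly invoking the coprime extendibility of $\lambda$ to its stabilizer, becomes decisive — and is the technical heart of the argument.
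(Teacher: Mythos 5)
The paper does not actually prove this statement: it is quoted as \cite[Theorem A]{qian2007}, so there is no internal argument to compare against and your proposal must stand on its own. Its skeleton is plausible (contrapositive; Brauer's permutation lemma to get a $y$-invariant $\lambda\in\mathrm{Irr}(N)^{\sharp}$; coprime extension of $\lambda$ to the preimage of a Sylow $p$-subgroup of $T/N$, $T=\mathrm{I}_{G}(\lambda)$; Clifford induction to get $\chi=\psi^{G}$ with $\chi(1)_{p}=|G:T|_{p}<|G|_{p}$), but two steps are genuinely missing. First, the reduction ``one may assume $G'$ has a unique minimal normal subgroup $N$ and $x\in N$'' does not follow from the induction you set up: passing to $G/K$ requires both $x\notin K$ and $y\notin K$, so the configurations in which every minimal normal subgroup meets the witness pair -- for instance $y$ lying in a central minimal normal $p$-subgroup having trivial intersection with $G'$, or $x$ and $y$ lying in two distinct minimal normal subgroups -- are left untreated, and these are precisely the cases where the later kernel argument is delicate.

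Second, the step you yourself call the main obstacle is a real gap, and the proposed repairs do not close it. Producing $\psi\in\mathrm{Irr}(T|\lambda)$ of $p'$-degree gives $\chi(1)_{p}=|G:T|_{p}$, but $p\mid\mathrm{cod}(\chi)$ is equivalent to $|\ker(\chi)|_{p}<|T|_{p}$, and ``choosing $\psi$ of minimal $p$-part'' can destroy exactly this: take $G=\langle y\rangle\times H$ with $\langle y\rangle\cong\mathsf{C}_{p}$ and $H=Q\rtimes P_{0}$ a Frobenius group with $P_{0}\cong\mathsf{C}_{p}$ and $P_0$-irreducible kernel $Q=G'$ a $q$-group; here $y$ centralizes $x\in Q^{\sharp}$, $T=\mathrm{I}_{G}(\lambda)=Q\times\langle y\rangle$ for $\lambda\in\mathrm{Irr}(Q)^{\sharp}$, and the choice $\psi=\lambda\times 1_{\langle y\rangle}$ (of minimal $p$-part) yields $\chi=\psi^{G}$ with $\langle y\rangle\le\ker(\chi)$ and $\mathrm{cod}(\chi)=|Q|$, not divisible by $p$; the selection rule must control the kernel, not only the degree (here one must take $\psi$ nontrivial on $\langle y\rangle$). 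Neither suggested fix supplies this: the inductive hypothesis cannot be applied to $T/\bigcap_{g\in G}\ker(\lambda)^{g}$ because $T$ is not subnormal in $G$ and the hypothesis ``$p$ divides no member of $\mathrm{cod}(G|G')$'' does not pass to arbitrary subgroups (Lemma~\ref{lem: basic facts on codegree}(2) only transfers codegrees upward from subnormal subgroups), while coprime extendibility controls degrees, not kernels. Finally, the nonabelian-socle case is only gestured at; it can in fact be done without CFSG (if $p\mid|N|$, column orthogonality gives a nontrivial $p'$-degree character of each simple factor and Lemma~\ref{lem: basic facts on codegree}(2) finishes; if $p\nmid|N|$, the Glauberman correspondence replaces Brauer's lemma), whereas Schreier's conjecture as you invoke it is neither needed nor, by itself, sufficient.
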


\begin{lem}[\mbox{\cite[Lemma 3.1]{alizadeh2019}}]\label{lem: |codG|=2}
	$G$ is a finite group with $|\mathrm{cod}(G)|\leq 2$
	if and only if $G$ is an elementary abelian $p$-group.
\end{lem}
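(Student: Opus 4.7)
The sufficiency is immediate: if $G$ is an elementary abelian $p$-group, every $\chi\in\mathrm{Irr}(G)$ is linear (as $G$ is abelian) and since $\exp(G)=p$ the image $\chi(G)\subseteq\mathbb C^{\times}$ is cyclic of order dividing $p$, so $\mathrm{cod}(\chi)=|G:\ker\chi|\in\{1,p\}$.

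For necessity, I would induct on $|G|$. The case $|\mathrm{cod}(G)|\le 1$ is immediate from Lemma \ref{lem: basic facts on codegree}(4), which forces $|G:\ker\chi|\le 1$ for every $\chi\in\mathrm{Irr}(G)$ and hence $G=1$. So assume $\mathrm{cod}(G)=\{1,n\}$ with $n>1$, and aim for four milestones: (i) $n$ is a prime $p$; (ii) $G$ is a $p$-group; (iii) $G$ is abelian; (iv) $\exp(G)=p$.

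Suppose first that $G$ is not perfect, so $G/G'$ is a nontrivial abelian group. Every nontrivial linear character of $G$ factors through $G/G'$, and its codegree equals its order in the dual group $\widehat{G/G'}\cong G/G'$. Since $\mathrm{cod}(G)\setminus\{1\}=\{n\}$, every nontrivial element of $G/G'$ has order $n$, forcing $n$ to be a prime $p$ and $G/G'$ to be elementary abelian --- this gives (i). Then (ii) follows from Lemma \ref{lem: basic facts on codegree}(3): every prime dividing $|G|$ divides some codegree and hence divides $n=p$. For (iii), take a nonlinear $\chi\in\mathrm{Irr}(G)$; by Lemma \ref{lem: basic facts on codegree}(4), $p\cdot\chi(1)=|G:\ker\chi|\le p^{2}$, so $|G/\ker\chi|\le p^{2}$, whence $G/\ker\chi$ is abelian and $\chi$ must be linear --- a contradiction, so $G$ is abelian. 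Finally (iv) is automatic: in the abelian $G$, every nontrivial character has order equal to its codegree $p$, so $\exp(G)=p$.

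The main obstacle is the \emph{perfect} case $G=G'>1$, where the linear-character argument for (i) is vacuous. Here I would first show $G$ must be simple: for any $N\triangleleft G$ with $1<N<G$, Lemma \ref{lem: basic facts on codegree}(1) gives $|\mathrm{cod}(G/N)|\le 2$ and $|G/N|<|G|$, so the induction hypothesis forces $G/N$ to be elementary abelian; but $G/N$ is also perfect as a quotient of a perfect group, so $G/N=1$ --- a contradiction. Hence $G$ is simple and nonabelian; every nontrivial $\chi\in\mathrm{Irr}(G)$ is then faithful with the same codegree $n$, and thus with the same degree $m:=|G|/n$. Combining the dimension sum $|G|=1+(|\mathrm{Irr}(G)|-1)\,m^{2}$ with $|G|=mn$ yields $m\bigl(n-(|\mathrm{Irr}(G)|-1)m\bigr)=1$, forcing $m=1$; but then $G$ would have only linear irreducible characters and be abelian, the final contradiction.
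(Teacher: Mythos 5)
Your proof is correct. Note that the paper does not prove this lemma at all: it is imported verbatim from \cite[Lemma 3.1]{alizadeh2019}, so there is no in-paper argument to compare against; what you have produced is a valid self-contained proof relying only on Lemma \ref{lem: basic facts on codegree}. The sufficiency direction and the four milestones in the non-perfect case are all sound: the codegree of a nontrivial linear character is its order in the dual group, which forces $n=p$ prime and $G/G'$ elementary abelian; part (3) of Lemma \ref{lem: basic facts on codegree} makes $G$ a $p$-group; the bound $|G:\ker\chi|\le\mathrm{cod}(\chi)^2=p^2$ makes every quotient $G/\ker\chi$ abelian, killing nonlinear characters; and the exponent claim is immediate. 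The perfect case is also handled correctly: induction plus Lemma \ref{lem: basic facts on codegree}(1) forces simplicity, faithfulness of all nontrivial characters gives a common degree $m=|G|/n$, and the identity $mn=1+(|\mathrm{Irr}(G)|-1)m^{2}$ yields $m=1$, a contradiction. The only step you leave implicit is that a nontrivial irreducible character cannot have codegree $1$ (needed both to conclude $\mathrm{cod}(\chi)=p$ for nonlinear $\chi$ and $\mathrm{cod}(\chi)=n$ in the simple case); this follows at once from Lemma \ref{lem: basic facts on codegree}(4), since $\mathrm{cod}(\chi)=1$ forces $\ker\chi=G$, so it is a cosmetic omission rather than a gap.
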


Let $G$ be a finite group and $\chi\in \mathrm{Irr}(G)$.
Recall that $\mathbf{Z}(\chi):=\{ g\in G: |\chi(g)|=\chi(1) \}$ and that $\mathbf{Z}(\chi)/\ker(\chi)=\mathbf{Z}(G/\ker(\chi))$ is cyclic.

\begin{lem}\label{lem: direct product, kernel and codegrees}
	Let $G$ be the direct product of finite groups $A$ and $B$.
	Then the following hold.
	\begin{description}
		\item[(1)] For $\alpha \in \mathrm{Irr}(A)$ and $\beta \in \mathrm{Irr}(B)$, 
		we have $\ker(\alpha\times \beta)=\ker(\alpha)\times \ker(\beta)$ if and only if $|\mathbf{Z}(\alpha)/\ker(\alpha)|$ and $|\mathbf{Z}(\beta)/\ker(\beta)|$ are coprime. 
		\item[(2)] 
		For $\alpha \in \mathrm{Irr}(A)$ and $\beta \in \mathrm{Irr}(B)$,
		if $(|\mathbf{Z}(\alpha)/\ker(\alpha)|,|\mathbf{Z}(\beta)/\ker(\beta)|)=1$,
		then $\mathrm{cod}(\alpha\times \beta)=\mathrm{cod}(\alpha)\mathrm{cod}(\beta)$.
		\item[(3)] If $|A|$ and $|B|$ are coprime, then $\mathrm{cod}(G)=\{ \mathrm{cod}(\alpha)\mathrm{cod}(\beta):\alpha \in \mathrm{Irr}(A),\beta\in \mathrm{Irr}(B) \}$.
		In particular, $|\mathrm{cod}(G)|=|\mathrm{cod}(A)|\cdot |\mathrm{cod}(B)|$.
	\end{description} 
\end{lem}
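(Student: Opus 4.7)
The approach is to reduce everything to the product formula $(\alpha\times\beta)(a,b)=\alpha(a)\beta(b)$ together with the cyclicity of $\mathbf{Z}(\alpha)/\ker(\alpha)$ and $\mathbf{Z}(\beta)/\ker(\beta)$ recalled just before the statement.

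For \textbf{(1)}, I would first observe that $(a,b)\in\ker(\alpha\times\beta)$ forces $|\alpha(a)|=\alpha(1)$ and $|\beta(b)|=\beta(1)$, so necessarily $a\in\mathbf{Z}(\alpha)$ and $b\in\mathbf{Z}(\beta)$. Restricted to these central subgroups, the maps $a\mapsto\alpha(a)/\alpha(1)$ and $b\mapsto\beta(b)/\beta(1)$ are homomorphisms into $\mathbb{C}^{\times}$ with kernels $\ker(\alpha)$ and $\ker(\beta)$ and with cyclic images consisting of all $m$-th, resp.\ $n$-th, roots of unity, where $m=|\mathbf{Z}(\alpha)/\ker(\alpha)|$ and $n=|\mathbf{Z}(\beta)/\ker(\beta)|$. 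The kernel condition then becomes $\alpha(a)/\alpha(1)=(\beta(b)/\beta(1))^{-1}$, so a pair in $\ker(\alpha\times\beta)$ outside $\ker(\alpha)\times\ker(\beta)$ exists if and only if the two cyclic images meet nontrivially, i.e.\ exactly when $\gcd(m,n)>1$. This yields the equivalence in (1).

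For \textbf{(2)}, once (1) is in hand, the hypothesis gives $\ker(\alpha\times\beta)=\ker(\alpha)\times\ker(\beta)$, so $|G:\ker(\alpha\times\beta)|=|A:\ker(\alpha)|\cdot|B:\ker(\beta)|$; dividing by $(\alpha\times\beta)(1)=\alpha(1)\beta(1)$ produces the asserted product of codegrees. For \textbf{(3)}, the coprimality $\gcd(|A|,|B|)=1$ forces $\gcd(m,n)=1$ for every pair, so (2) applies to every irreducible character of $G$; combined with the standard identification $\mathrm{Irr}(G)=\{\alpha\times\beta:\alpha\in\mathrm{Irr}(A),\beta\in\mathrm{Irr}(B)\}$ this yields the set equality. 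The cardinality statement then follows from unique factorization, since $\mathrm{cod}(\alpha)$ divides $|A|$ and $\mathrm{cod}(\beta)$ divides $|B|$, so distinct pairs $(\mathrm{cod}(\alpha),\mathrm{cod}(\beta))$ give distinct products. The whole argument is essentially a computation; the only subtle point is the kernel analysis in (1), after which (2) and (3) are immediate, so I do not anticipate any substantive obstacle.
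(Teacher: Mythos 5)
Your proposal is correct and takes essentially the same route as the paper: parts (2) and (3) are the identical index computation and coprimality argument, and your kernel analysis in (1) is simply the standard solution to \cite[Problem 4.3]{isaacs1994}, which the paper cites rather than proves.
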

\begin{proof}
  Part (1) is \cite[Problem 4.3]{isaacs1994}.  
 Part (2) follows from the equality
\[
\mathrm{cod}(\alpha\times\beta)=\frac{|A\times B:\ker(\alpha\times \beta)|}{\chi(1)}=\frac{|A:\ker(\alpha)|}{\alpha(1)}\cdot\frac{|B:\ker(\beta)|}{\beta(1)}=\mathrm{cod}(\alpha)\mathrm{cod}(\beta),
\]
where the second equality holds as $\ker(\alpha\times \beta)=\ker(\alpha)\times \ker(\beta)$ by part (1).	

For part (3), as $(|A|,|B|)=1$, every pair $\alpha \in \mathrm{Irr}(A), \beta  \in \mathrm{Irr}(B)$ satisfies the condition in part (2), so the displayed equality holds.  
Let $\alpha,\gamma\in \mathrm{Irr}(A)$ and $\beta, \delta\in \mathrm{Irr}(B)$.
Note that $(|A|,|B|)=1$,
	and hence 
	$\mathrm{cod}(\alpha)\mathrm{cod}(\beta)=\mathrm{cod}(\gamma)\mathrm{cod}(\delta)$ if and only if $\mathrm{cod}(\alpha)=\mathrm{cod}(\gamma)$ and $\mathrm{cod}(\beta)=\mathrm{cod}(\delta)$.
	Consequently, $|\mathrm{cod}(G)|=|\mathrm{cod}(A)|\cdot |\mathrm{cod}(B)|$.
\end{proof}

\begin{lem}\label{lem: kernel, abelian, cod}
  Let $G$ be a finite group with a normal subgroup
$V = V_{1} \times \dots \times V_{t}$,
where each $V_{i}$ is an abelian minimal normal subgroup of $G$ and
the $V_{i}$ are pairwise non-isomorphic as $G$-modules.
If $\lambda = \lambda_{1} \times \dots \times \lambda_{t}$ with
$\lambda_{i} \in \mathrm{Irr}(V_{i})^{\sharp}$ for every $i$, then
$\ker(\chi) \cap V = 1$ and $|V|$ divides $\mathrm{cod}(\chi)$ for every
$\chi \in \mathrm{Irr}(G | \lambda)$.
\end{lem}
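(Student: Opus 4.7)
My approach has two stages: first show $V\cap\ker(\chi)=1$ by classifying the $G$-invariant subgroups of $V$, and then deduce the codegree divisibility via Ito's theorem applied in $G/\ker(\chi)$.

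For the first stage, since each $V_i$ is an \emph{abelian} minimal normal subgroup of $G$, its only $G$-invariant subgroups are $1$ and $V_i$, so each $V_i$ is a simple $G$-module. I would prove by induction on $t$ that every $G$-invariant subgroup $W\le V$ has the form $\prod_{i\in S}V_i$ for some $S\subseteq\{1,\ldots,t\}$. Setting $W':=W\cap(V_1\times\cdots\times V_{t-1})$, the case $V_t\le W$ gives $W=V_t\times W'$ and induction applies. In the remaining case where $\pi_t(W)=V_t$ but $V_t\not\le W$, the quotient $W/W'$ is the graph of a nonzero $G$-homomorphism $V_t\to (V_1\times\cdots\times V_{t-1})/W'$; composing with the projection to some $V_j$ with $j<t$ and $V_j\not\le W'$ would yield a nonzero $G$-homomorphism $V_t\to V_j$, which is an isomorphism by simplicity and contradicts the pairwise non-isomorphism hypothesis. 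Applying this to $K:=\ker(\chi)\cap V$, any $V_j\le K$ would force $\chi_V$ to be trivial on $V_j$; since $\lambda$ is a constituent of $\chi_V$ (as $\chi\in\mathrm{Irr}(G|\lambda)$), this would give $\lambda|_{V_j}=\lambda_j=1_{V_j}$, contradicting $\lambda_j\in\mathrm{Irr}(V_j)^{\sharp}$. Hence $K=1$.

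For the second stage, pass to $\overline G:=G/\ker(\chi)$ and let $\overline\chi\in\mathrm{Irr}(\overline G)$ be the corresponding faithful character, so $\overline\chi(1)=\chi(1)$ and $\mathrm{cod}(\chi)=|\overline G|/\overline\chi(1)$ by Lemma~\ref{lem: basic facts on codegree}(1). Since $V\cap\ker(\chi)=1$, the image $\overline V=V\ker(\chi)/\ker(\chi)\cong V$ is an abelian normal subgroup of $\overline G$, so Ito's theorem gives $\overline\chi(1)\mid|\overline G:\overline V|$; rearranging shows $|V|=|\overline V|$ divides $|\overline G|/\overline\chi(1)=\mathrm{cod}(\chi)$.

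The main obstacle is the submodule classification in the first stage: when the $V_i$ have pairwise coprime orders the claim is immediate from the Sylow decomposition, but when several $V_i$ share a common characteristic the pairwise non-isomorphism hypothesis is genuinely needed to rule out diagonal/graph-type $G$-submodules via the Schur-lemma argument above.
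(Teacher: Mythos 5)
Your proposal is correct and follows essentially the same route as the paper: show that $\ker(\chi)\cap V$, being a $G$-invariant subgroup of the multiplicity-free product $V_1\times\cdots\times V_t$, is a sub-product containing no $V_j$ (hence trivial), and then apply It\^o's theorem to the abelian normal subgroup $V\ker(\chi)/\ker(\chi)$ of $G/\ker(\chi)$. The only difference is that you supply a full inductive/Schur-lemma proof of the sub-product classification, which the paper simply asserts from the pairwise non-isomorphism of the $V_i$.
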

\begin{proof}
Let $A\le V$ be $G$-invariant, and set $\Omega=\{V_1,\dots,V_t\}$.  
Because the $V_i$ are pairwise non-isomorphic $G$-modules, there exists a subset $\Delta$ of $\Omega$ such that $A=\bigtimes_{W\in\Delta}W$.  
Take $\chi\in\mathrm{Irr}(G|\lambda)$ with $\lambda=\lambda_1\times\dots\times\lambda_t$ and each $\lambda_i\in\mathrm{Irr}(V_i)^\sharp$.  
Then $\ker(\chi)\cap U=1$ for every $U\in\Omega$, so the $G$-invariant subgroup $\ker(\chi)\cap V$ must be trivial.  
Since $V\ker(\chi)/\ker(\chi)$ is an abelian normal subgroup of $G/\ker(\chi)$, It\^o's theorem \cite[Theorem 6.15]{isaacs1994} gives $\chi(1)\mid|G:V\ker(\chi)|$.  
Hence, $|V|$ divides $\mathrm{cod}(\chi)$ for every $\chi\in\mathrm{Irr}(G|\lambda)$.
\end{proof}

Let $G=V \rtimes H$ be a finite group where $V$ is a completely reducible $H$-module (possibly of mixed characteristic).
Each irreducible $H$-submodule of $V$ belongs to a single isomorphism class.
Let $\mathcal{S}_H(V)$ denote the \emph{set of representatives of the isomorphism classes of irreducible $H$-submodules of $V$}.
Therefore, 
$$V=\bigtimes_{W\in \mathcal{S}_H(V)} W(V),$$
where $W(V)$ denotes the \emph{$W$-homogeneous part} of $V$ (see \cite[Definition 1.12]{isaacs1994}).

For a positive integer $n$ and a set of primes $\pi$, we write $n_{\pi} = \prod_{p \in \pi} n_{p}$.

\begin{lem}\label{lem: large orbit}
	Let $G$ be a finite solvable group with a trivial Frattini subgroup.
	Then $G=V \rtimes H$ where $V=\mathbf{F}(G)$, and the following hold.
	\begin{description}
		\item[(1)] If $V$ is a Hall $\pi$-subgroup of $G$, then, for each $\mathcal{A} \subseteq \mathcal{S}_H(V)$, $\prod_{W\in \mathcal{A}}|W|=n_{\pi}$ for some $n \in \mathrm{cod}(G)$.
		\item[(2)] If $H$ is abelian, then $\prod_{W\in \mathcal{A}}|W| \in \mathrm{cod}(G)$ for each $\mathcal{A} \subseteq \mathcal{S}_H(V)$.
	\end{description}
\end{lem}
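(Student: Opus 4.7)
The plan is to build, for every subset $\mathcal A\subseteq\mathcal S_H(V)$, a single $\chi\in\mathrm{Irr}(G)$ whose codegree records the quantity $\prod_{W\in\mathcal A}|W|$. To set the stage, I would use that $G$ is solvable with trivial Frattini, which (together with Gaschütz) makes $V=\mathbf F(G)$ the socle of $G$, hence a direct product of abelian minimal normal subgroups and completely reducible as $H$-module; in particular each $W$-homogeneous part $W(V)$ is a direct sum of $H$-submodules isomorphic to $W$. For every $W\in\mathcal A$ I would fix one such summand $W_0\cong W$ together with an $H$-invariant complement $C_W\le W(V)$, and assemble
\[
K=\Bigl(\bigoplus_{W\in\mathcal A}C_W\Bigr)\oplus\Bigl(\bigoplus_{W\notin\mathcal A}W(V)\Bigr),\qquad \overline V:=V/K\cong\bigoplus_{W\in\mathcal A}W_0.
\]
Then $K\trianglelefteq G$, $|\overline V|=\prod_{W\in\mathcal A}|W|$, and the $W_0$'s are pairwise non-isomorphic irreducible $G$-modules inside $\overline V$.

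Next I would take $\lambda\in\mathrm{Irr}(V)$ inflated from $\overline\lambda=\prod_{W\in\mathcal A}\lambda_W\in\mathrm{Irr}(\overline V)$ with each $\lambda_W\in\mathrm{Irr}(W_0)^{\sharp}$, and any $\chi\in\mathrm{Irr}(G|\lambda)$. The key observation is that $\ker(\chi)\cap V=K$: it is $G$-invariant, contains $K$ (since $K$ lies in the kernel of every $G$-conjugate of $\lambda$), and is contained in $\ker(\lambda)$ (as $\lambda$ is a constituent of $\chi_V$); hence $\ker(\chi)\cap V/K$ is a $G$-submodule of $\overline V$ lying in $\ker(\overline\lambda)$, but $\lambda_W\neq 1$ prevents any $W_0$ from being contained in $\ker(\overline\lambda)$ while every nonzero $G$-submodule of $\overline V$ is a sum of some of the $W_0$'s, forcing triviality. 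For (1), the Hall-$\pi$ hypothesis makes $|G:V|$ a $\pi'$-number, so It\^o's theorem \cite[Theorem 6.15]{isaacs1994} gives $\chi(1)\mid|G:V|$ and hence $\chi(1)_{\pi}=1$; combined with $|G|_\pi=|V|$ this yields $\mathrm{cod}(\chi)_\pi=|V:K|=\prod_{W\in\mathcal A}|W|$, so $n:=\mathrm{cod}(\chi)$ is the required codegree.

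For part (2), I would sharpen the choice of $\chi$ inside $\overline G:=G/K=\overline V\rtimes H$. Because $H$ is abelian, Schur's lemma realises $E_W:=\mathrm{End}_{H}(W_0)$ as a field into which $H/\mathbf C_H(W_0)$ embeds, and $E_W^{\times}$ acts semi-regularly on $W_0$; this forces $\mathrm{Stab}_H(\lambda_W)=\mathbf C_H(W_0)$ for every nontrivial $\lambda_W$, and intersecting gives $\mathrm{Stab}_H(\overline\lambda)=\mathbf C_H(\overline V)$. Setting $T=\overline V\times\mathbf C_H(\overline V)$, the trivial extension $\tilde\lambda=\overline\lambda\times 1_{\mathbf C_H(\overline V)}$ and the Clifford correspondence supply $\chi:=\tilde\lambda^{\,\overline G}\in\mathrm{Irr}(\overline G)$ of degree $|H:\mathbf C_H(\overline V)|$; since $H$ is abelian, a short conjugation check shows $\mathbf C_H(\overline V)\trianglelefteq\overline G$, so $\ker(\chi)=\mathbf C_H(\overline V)$ and $\mathrm{cod}(\chi)=|\overline V|=\prod_{W\in\mathcal A}|W|$, which inflates to a codegree of $G$. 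The main hurdle I anticipate is the stabiliser computation in (2), which must work even when the prime supporting some $W_0$ divides $|H|$; this is exactly why I would appeal to the Schur description of $E_W$ rather than to Maschke-style semisimplicity of $\mathbb F_p[H]$.
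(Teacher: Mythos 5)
Your argument is correct, and while it rests on the same basic idea as the paper's proof (take a character of $V$ that is nontrivial on one copy of each $W\in\mathcal A$ and trivial elsewhere, then control the kernel and apply It\^o), the execution is genuinely different. The paper proves both parts by induction on $|G|$, reducing to the case $\mathcal A=\mathcal S_H(V)$ with $V=\bigtimes_{W\in\mathcal A}W$; part (1) then follows from Lemma \ref{lem: kernel, abelian, cod} and the Hall hypothesis, and part (2) invokes \cite[\S 19, Lemma 19.16]{huppertcharactertheory} to choose $\lambda_W$ with $\mathrm{I}_H(\lambda_W)=\mathbf{C}_H(W)$, so that $\mathrm{I}_G(\lambda)=\mathbf{C}_G(V)=V$ and $\lambda^G$ is irreducible with trivial kernel and codegree $|V|$. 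You avoid the induction by constructing the $H$-invariant (hence normal) complement $K$ explicitly and computing $\ker(\chi)\cap V=K$ directly; for (1) this matches the paper's computation, the only elided step being $|\ker(\chi)|_\pi=|K|$, which is immediate since $K=\ker(\chi)\cap V$ is a normal Hall $\pi$-subgroup of $\ker(\chi)$. For (2) your route buys something real: after passing to $\overline G=\overline V\rtimes H$ the complement $H$ need not act faithfully on $\overline V$ (precisely because $\overline V$ need no longer be the Fitting subgroup of the quotient), and you absorb this by running the Clifford correspondence from the full stabilizer $T=\overline V\times\mathbf{C}_H(\overline V)$ with the trivial extension, getting $\ker(\chi)=\mathbf{C}_H(\overline V)$ and $\mathrm{cod}(\chi)=|\overline V|$; the paper's inductive reduction leaves this point implicit, since its key identity $\mathbf{C}_G(V)=V$ is available only for the full Fitting subgroup. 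You also derive the stabilizer equality $\mathrm{I}_H(\lambda_W)=\mathbf{C}_H(W_0)$ for \emph{every} nontrivial $\lambda_W$ from Schur's lemma instead of quoting Huppert; the only caveat is that the semiregularity argument must be applied to the dual module $\mathrm{Irr}(W_0)$ (which is again irreducible with the same centralizer, cf.\ \cite[Lemma 1]{zhang2000}) rather than to $W_0$ itself --- a one-line adjustment, not a gap.
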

\begin{proof} 
    Since $G$ is a solvable group with a trivial Frattini subgroup, Gasch\"utz's theorem \cite[Theorem 1.12]{manzwolf1992}
	implies that $G=V \rtimes H$ where $V=\mathbf{F}(G)$ is a completely reducible $H$-module (possibly of mixed characteristic).

	(1) We work by induction on $|G|$.
    By induction, we may assume that $\mathcal{A}=\mathcal{S}_H(V)$ and $V=\bigtimes_{W\in \mathcal{A}} W$.
    Let  $\lambda=\bigtimes_{W\in \mathcal{A}} \lambda_W$ where $\lambda_W\in \mathrm{Irr}(W)^\sharp$.
	By Lemma \ref{lem: kernel, abelian, cod}, $|V|\mid \mathrm{cod}(\chi)$ for each $\chi \in \mathrm{Irr}(G|\lambda)$.
	As $V$ is a Hall $\pi$-subgroup of $G$,
	we conclude that $|V|=\mathrm{cod}(\chi)_{\pi}$ for each $\chi \in \mathrm{Irr}(G|\lambda)$.  
	
	(2)  	
	We work by induction on $|G|$.
    By induction, we may also assume that $\mathcal{A}=\mathcal{S}_H(V)$ and $V=\bigtimes_{W\in \mathcal{A}} W$.
    As $H$ is abelian, there exists a nontrivial $\lambda_W\in \mathrm{Irr}(W)$
	such that $\mathrm{I}_{H}(\lambda_W) =\mathbf{C}_{H}(W)$ by \cite[\S 19, Lemma 19.16]{huppertcharactertheory}.
    Let $\lambda=\bigtimes_{W\in \mathcal{A}} \lambda_W$.
    Then $\lambda \in \mathrm{Irr}(V)$ and 
	$$\mathrm{I}_{G}(\lambda) =\bigcap_{W\in \mathcal{A}} \mathrm{I}_{G}(\lambda_W) =\bigcap_{W\in \mathcal{A}} \mathbf{C}_{G}(W) =\mathbf{C}_{G}(V) =V.$$
	So, $\lambda^{G}\in \mathrm{Irr}(G)$.
	Note that $\ker(\lambda^{G})\cap V=1$ by Lemma \ref{lem: kernel, abelian, cod},
	and hence $\ker(\lambda^{G})=1$.
Therefore, $\prod_{W\in \mathcal{A}}|W|=|V|=\mathrm{cod}(\lambda^{G})\in \mathrm{cod}(G)$.
\end{proof}

Recall that, for a finite solvable group $G$,
$h(G)$ denotes the Fitting height of $G$.

\begin{lem}[\mbox{\cite[Corollary 1.2]{qian2025}}]\label{lem: qz}
	If $G$ is a finite solvable group,
	then $h(G)\leq |\mathrm{cod}(G)|-1$.
\end{lem}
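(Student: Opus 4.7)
The plan is to argue by induction on the Fitting height $h(G)$. The base cases $h(G)=0$ (where $G=1$) and $h(G)=1$ (where $G$ is a nontrivial nilpotent group, and hence $|\mathrm{cod}(G)|\geq 2$ by Lemma \ref{lem: |codG|=2}) are immediate. For the inductive step, set $F=\mathbf{F}(G)$ so that $h(G/F)=h(G)-1$, and by induction $|\mathrm{cod}(G/F)|\geq h(G)$. Since $\mathrm{cod}(G/F)\subseteq\mathrm{cod}(G)$ by Lemma \ref{lem: basic facts on codegree}(1), the entire argument reduces to the local claim that $\mathrm{cod}(G)\supsetneq\mathrm{cod}(G/F)$ whenever $F\neq 1$.

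The next step is to prove this local claim. Choose a minimal normal subgroup $V\leq F$ of $G$; then $V$ is an elementary abelian $p$-group for some prime $p$. For every $\chi\in\mathrm{Irr}(G|V)$, the $G$-simplicity of $V$ forces $\ker(\chi)\cap V=1$, and Ito's theorem applied inside $G/\ker(\chi)$ then gives $|V|\mid\mathrm{cod}(\chi)$; this is the same mechanism as in the proof of Lemma \ref{lem: kernel, abelian, cod}. On the other hand, every $\psi\in\mathrm{Irr}(G/F)$ has $\mathrm{cod}(\psi)$ dividing $|G:F|$ by Lemma \ref{lem: basic facts on codegree}(1). The goal therefore becomes to exhibit $\chi\in\mathrm{Irr}(G|V)$ whose codegree carries a prime power of $|V|$ that is unattainable by any divisor of $|G:F|$.

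The main obstacle is exactly this last step, since a priori nothing prevents $\pi(V)$ from overlapping with $\pi(G/F)$. In the favorable case where $V$ is a Hall subgroup of $G$ so that $\gcd(|V|,|G:F|)=1$, the claim follows at once by comparing $p$-parts. In the general case the plan is to strengthen the induction to one on $|G|$ and reduce to a quotient with trivial Frattini subgroup: Gasch\"utz's theorem then splits $F$ off as a semidirect product $V\rtimes H$, and Lemma \ref{lem: large orbit} can be invoked to produce characters of $G$ whose codegree carries the full $V$-part. The remaining technical heart is to handle overlapping primes in $\pi(F)\cap\pi(G/F)$, which is where I expect the bulk of the work to lie, via a careful case analysis on those primes together with the induction hypothesis applied to suitable subgroups and quotients.
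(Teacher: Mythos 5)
Your proposal is not a proof: it openly defers the only hard step. The reduction you set up (induction on $h(G)$, plus the observation that $|V|\mid\mathrm{cod}(\chi)$ for $\chi\in\mathrm{Irr}(G|V)$ with $V$ minimal normal, while every codegree of $G/\mathbf{F}(G)$ divides $|G:\mathbf{F}(G)|$) is correct as far as it goes, but it only settles the case where the relevant prime of $V$ does not divide $|G:\mathbf{F}(G)|$. The entire content of the lemma is the claim that $\mathrm{cod}(G)\supsetneq\mathrm{cod}(G/\mathbf{F}(G))$ in general, i.e.\ precisely when $\pi(\mathbf{F}(G))$ meets $\pi(G/\mathbf{F}(G))$, and for that case you offer only a plan. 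The tools you name do not cover it: Lemma \ref{lem: large orbit}(1) applies only when $\mathbf{F}(G)$ is a Hall subgroup of $G$ (the favorable coprime situation you already handled), and Lemma \ref{lem: large orbit}(2) requires the complement $H$ to be abelian; passing to $G/\Phi(G)$ via Gasch\"utz changes neither obstruction. Since the missing statement, once granted, yields the theorem immediately by your induction, what you have written is essentially a restatement of the result rather than an argument for it.

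Note also that the paper does not prove this lemma at all: it is imported from \cite[Corollary 1.2]{qian2025}, where it is a consequence of a genuinely nontrivial theorem about kernels and codegrees of solvable groups (constructing irreducible characters whose codegrees are controlled along the Fitting series), not of the naive ``new codegree modulo $\mathbf{F}(G)$'' mechanism. So the honest options are either to cite \cite{qian2025} as the paper does, or to supply the case analysis you postpone — and the latter is the bulk of the work, not a technical remainder.
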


\subsection{Results related to Frobenius groups}

In this subsection, we collect some useful results related to Frobenius groups.

\begin{lem}\label{lem: faith irred mod of Q8}
	Let $G$ be a quaternion group of order $8$.
	If $V$ is a faithful irreducible $G$-module over a finite field $\mathbb{F}$,
    then $V$ is a faithful absolutely irreducible $G$-module of dimension $2$.
	In particular, $\mathrm{End}_{\mathbb{F}[G]}(V)=\mathbb{F}$.	
\end{lem}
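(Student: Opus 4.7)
The plan is to compute the Wedderburn decomposition of $\mathbb{F}[G]$ and identify the unique faithful simple summand. First I would argue that $\mathrm{char}(\mathbb{F})$ is odd: since $G=\mathsf{Q}_8$ is a $2$-group, in characteristic $2$ its only irreducible module is the trivial one, which fails to be faithful. Thus $|G|=8$ is invertible in $\mathbb{F}$, and Maschke's theorem makes $\mathbb{F}[G]$ semisimple.

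Next I would invoke the Artin--Wedderburn theorem together with Wedderburn's little theorem on finite division rings: since $\mathbb{F}$ is finite, every division ring occurring as an endomorphism ring of a simple $\mathbb{F}[G]$-module is a finite field. Hence
\[
\mathbb{F}[G] \;\cong\; \bigoplus_{i} M_{n_i}(K_i),
\]
where each $K_i$ is a finite extension of $\mathbb{F}$, and comparing $\mathbb{F}$-dimensions yields $\sum_i n_i^2\,[K_i:\mathbb{F}] = 8$.

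Now I would pin down the faithful block. Because $G'=\mathbf{Z}(G)$ has order $2$ and every linear character of $G$ kills $G'$, no one-dimensional module can be faithful. On the other hand, $G/G'\cong \mathsf{C}_2\times \mathsf{C}_2$ and $\mathrm{char}(\mathbb{F})\neq 2$, so $\mathbb{F}[G/G']\cong \mathbb{F}^{4}$; pulling back, this produces four distinct one-dimensional summands $M_1(\mathbb{F})$ in $\mathbb{F}[G]$, accounting for dimension $4$. Any remaining summand $M_{n_i}(K_i)$ has $n_i\ge 2$, so its $\mathbb{F}$-dimension is at least $4$; matching the residual dimension $8-4=4$ leaves exactly one such summand, necessarily $M_2(\mathbb{F})$. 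Thus the unique faithful simple $\mathbb{F}[G]$-module is $\mathbb{F}^{2}$, whose endomorphism ring is $\mathbb{F}$; in particular $V\cong \mathbb{F}^{2}$ is $2$-dimensional, and the equality $\mathrm{End}_{\mathbb{F}[G]}(V)=\mathbb{F}$ is precisely the statement that $V$ is absolutely irreducible.

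The one step that is not pure bookkeeping is the appeal to Wedderburn's little theorem: over a non-finite field one can have a genuinely noncommutative $4$-dimensional division algebra as a summand (e.g.\ Hamilton's quaternions appear in $\mathbb{Q}[\mathsf{Q}_8]$), and such a block would yield a faithful simple module of $\mathbb{F}$-dimension $4$ whose endomorphism ring is strictly larger than $\mathbb{F}$. Ruling this out is where the finiteness of $\mathbb{F}$ is used in an essential way, and this is the main (minor) obstacle.
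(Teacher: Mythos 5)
Your argument is correct, but it proceeds quite differently from the paper. The paper does not touch the group algebra at all: it first restricts $V$ to $\mathbf{Z}(G)$ and uses Clifford theory (\cite[Chapter B, Corollary 9.4]{doerk1992}) to see that $\mathbf{Z}(G)\cong\mathsf{C}_2$ admits a faithful irreducible module over $\mathbb{F}$, which by \cite[Chapter B, Theorem 9.8]{doerk1992} forces $\mathrm{char}(\mathbb{F})\neq 2$; it then quotes the fact that every finite field of odd characteristic is a splitting field for $\mathsf{Q}_8$ (\cite[Theorem 2.6]{huppertbook2}, ultimately because $-1$ is a sum of two squares in a finite field), so the faithful irreducible module is the $2$-dimensional absolutely irreducible one, and the equality $\mathrm{End}_{\mathbb{F}[G]}(V)=\mathbb{F}$ follows from \cite[Kapitel V, 11.10 Hilfssatz]{huppertgrouptheory}. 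Your route instead computes the Wedderburn decomposition of $\mathbb{F}[G]$ directly: Maschke plus Artin--Wedderburn plus Wedderburn's little theorem give $\mathbb{F}[G]\cong\bigoplus_i M_{n_i}(K_i)$ with each $K_i$ a finite extension field of $\mathbb{F}$, the abelianization contributes four blocks $M_1(\mathbb{F})$ since $\mathbb{F}[G/G']\cong\mathbb{F}^4$ in odd characteristic, and the dimension count $\sum_i n_i^2[K_i:\mathbb{F}]=8$ leaves a single block $M_2(\mathbb{F})$, whose simple module must be $V$ because $V$ is faithful and hence not linear. This is essentially self-contained (it in effect reproves the splitting-field statement the paper imports), and you correctly isolate where finiteness of $\mathbb{F}$ enters, namely in excluding a quaternionic division-algebra block such as the one occurring in $\mathbb{Q}[\mathsf{Q}_8]$; the paper's appeal to the splitting-field theorem hides the same finite-field input in the solvability of $x^2+y^2=-1$. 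One small point you should make explicit: the claim that every remaining block has $n_i\geq 2$ needs the observation that a block with $n_i=1$ yields a representation $G\to K_i^{\times}$ into an abelian group, which therefore kills $G'$ and is already accounted for among the four blocks of $\mathbb{F}[G/G']\cong\mathbb{F}^4$; your earlier remark about linear characters only covers modules of $\mathbb{F}$-dimension one, whereas an $n_i=1$ block could a priori be larger over $\mathbb{F}$ (compare $\mathbb{F}_{16}$ inside $\mathbb{F}_2[\mathsf{C}_5]$). This is a one-line repair, not a genuine gap.
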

\begin{proof}
	Since $V$ is a faithful irreducible $G$-module over $\mathbb{F}$, 
	it follows by \cite[Chapter B, Corollary 9.4]{doerk1992}
	that $V$ is a faithful homogeneous $\mathbf{Z}(G)$-module over $\mathbb{F}$.
    Further, the cyclic group $\mathbf{Z}(G)$ has a faithful irreducible module over $\mathbb{F}$.
	So, an application of \cite[Chapter B, Theorem 9.8]{doerk1992} yields that 
	 the characteristic of $\mathbb{F}$ is not 2.
    As $\mathbb{F}$ is a splitting field for $G=\mathsf{Q}_8$ by \cite[Theorem 2.6]{huppertbook2},
	we conclude that $V$ is a faithful absolutely irreducible $G$-module of dimension $2$.
	Therefore, \cite[Kapitel V, 11.10 Hilfssatz]{huppertgrouptheory}
	yields that $\mathrm{End}_{\mathbb{F}[G]}(V)=\mathbb{F}$.	
\end{proof}

Let a finite group $H$ act coprimely on an abelian group $A$.
We say that $A$ is \emph{$H$-decomposable} if $A=B\times C$ where $B$ and $C$ 
are nontrivial $H$-invariant subgroups of $A$;
otherwise, we say that $A$ is \emph{$H$-indecomposable}.
If $A$ is $H$-indecomposable, then it is well-known that
$A$ is a homocyclic $q$-group for some prime $q$ such that
$\Omega_{i}(A)/\Omega_{i-1}(A)$
are isomorphic $H$-modules over $\mathbb{F}_q$ for $1\leq i\leq \log_q(\exp(A))$
where $\Omega_i(A):=\{ a\in A: a^{q^i}=1 \}$
(see e.g. \cite[Corollary 1]{harris1977}).

\begin{lem}\label{lem: Frobeniusly action}
	Let a finite group $H$ act via automorphisms on a finite nilpotent group $N$, and let $G=N \rtimes H$.
	Then $H$ acts Frobeniusly on $N/\Phi(N)$
    if and only if $H$ acts Frobeniusly on $N/N'$.
\end{lem}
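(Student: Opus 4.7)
The plan is to reduce the lemma in stages: first to the case of an abelian group by quotienting out $N'$, then to the case of an abelian $p$-group by splitting into Sylow subgroups, and finally to solve the abelian $p$-group case via a Nakayama-type argument on the endomorphism $1-h$.

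First, since $N$ is nilpotent, $N/\Phi(N)$ is a direct product of elementary abelian $p$-groups, hence abelian, so $N'\le \Phi(N)$. Setting $\bar N := N/N'$, one gets $\Phi(\bar N)=\Phi(N)/N'$ and $\bar N/\Phi(\bar N)\cong N/\Phi(N)$ as $H$-modules. Thus the equivalence reduces to the statement that $H$ acts Frobeniusly on the finite abelian group $\bar N$ if and only if it does so on $\bar N/\Phi(\bar N)$.

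Next, decompose $\bar N$ into its $H$-invariant Sylow subgroups $\bar N_p$, noting that $\Phi(\bar N)=\prod_p \Phi(\bar N_p)=\prod_p p\bar N_p$. Since $H$ acts Frobeniusly on a direct product of $H$-invariant subgroups if and only if it does so on each factor (a nontrivial fixed element in any factor gives a nontrivial fixed element of the product, and vice versa), the problem reduces to the following assertion, which I would handle separately for each prime $p$: for an abelian $p$-group $A$ written additively with an $H$-action, $H$ acts Frobeniusly on $A$ if and only if it does so on $A/pA$.

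For this last reduction, fix $h\in H^\sharp$ and regard $1-h$ as a $\mathbb Z$-linear endomorphism of $A$. The condition that $h$ has no nonzero fixed points on $A$ (respectively on $A/pA$) is equivalent to injectivity of $1-h$ on $A$ (respectively on $A/pA$); by finiteness, injectivity is equivalent to surjectivity in each case. It therefore suffices to show $1-h$ is surjective on $A$ if and only if on $A/pA$. The ``only if'' direction is immediate. For the ``if'' direction, set $K=(1-h)A$; surjectivity on $A/pA$ gives $K+pA=A$, whence $A=K+pK+p^2A=K+p^2A$, and iterating with $p^kA=0$ for $k$ large yields $K=A$. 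The main obstacle, such as it is, lies only in this Nakayama step; the rest of the proof is bookkeeping using the nilpotent decomposition of $N$.
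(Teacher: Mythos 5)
Your proof is correct, but it diverges from the paper's argument at the key step, so a comparison is worthwhile. Both proofs make the same initial reduction: since $N$ is nilpotent, $N'\le\Phi(N)$, so $\Phi(N/N')=\Phi(N)/N'$ and the statement becomes ``$H$ acts Frobeniusly on the abelian group $N/N'$ iff it does so on its Frattini quotient'' (the paper phrases this as an induction on $|G|$, you do it directly). From there the paper proceeds by splitting into the $H$-decomposable case (handled by induction) and the $H$-indecomposable case, where it first deduces that the action is coprime (a fixed-point-free automorphism group of $N/\Phi(N)$ has order prime to $|N|$) and then invokes Harris's theorem that a coprime-indecomposable abelian group is homocyclic with all layers $\Omega_i/\Omega_{i-1}$ isomorphic as $H$-modules, so Frobenius action on the bottom layer propagates to all of $N$. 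You instead split into Sylow subgroups and prove, for each fixed $h\in H^\sharp$ and each abelian $p$-group $A$, that $1-h$ is surjective on $A$ iff it is surjective on $A/pA$, via the Nakayama-type iteration $A=K+pA\Rightarrow A=K+p^kA=K$; combined with ``injective $\Leftrightarrow$ surjective'' on finite groups this gives $\mathbf{C}_A(h)=0\Leftrightarrow \mathbf{C}_{A/pA}(h)=0$. Your route is more elementary and in fact stronger: it is an element-by-element statement that needs no coprimality hypothesis and no citation, whereas the paper's argument is global in $H$ (coprimality is extracted from the Frobenius hypothesis before Harris's result can be applied). What the paper's approach buys is economy within its own toolkit: the Harris result and the decomposable/indecomposable dichotomy are reused elsewhere (e.g.\ in Lemma 2.11), so the authors get this lemma essentially for free from machinery they need anyway. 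Your Sylow-product step (Frobenius on a direct product of $H$-invariant factors iff Frobenius on each factor) is correct and its verification is the easy check you indicate, so there is no gap.
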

\begin{proof}
  We proceed by induction on $|G|$.
Since $N$ is nilpotent, $N'\le\Phi(N)$ and therefore $\Phi(N/N')=\Phi(N)/N'$.
   By induction, we may assume that $N'=1$ i.e. $N$ is abelian.
   If $H$ acts Frobeniusly on $N$, then clearly $H$ acts Frobeniusly on $N/\Phi(N)$.

   Assume now that $H$ acts Frobeniusly on $N/\Phi(N)$.
	Then $H$ acts coprimely and faithfully on $N$.
   Suppose that  $N$ is $H$-decomposable, say $N=A\times B$ where $A$ and $B$ are nontrivial normal subgroups of $G$.
   Since $H$ acts Frobeniusly on $N/\Phi(N)$, it also acts Frobeniusly on $A\Phi(N)/\Phi(N)$ and therefore on $A/\Phi(A)$.
   By induction, $H$ acts Frobeniusly on $A$, and likewise on $B$.
   As a consequence, $H$ acts Frobeniusly on $N$.
   So, we may assume that $N$ is $H$-indecomposable.
   In particular, $N$ is an abelian $q$-group.
    Applying \cite[Corollary 1]{harris1977}, 
    we have that $H$ acts Frobeniusly on every $G$-chief factor in $N$.
	Consequently, $H$ acts Frobeniusly on $N$.
\end{proof}

\begin{lem}\label{lem: size of chief factor in Frob ker}
	Let a finite group $H$ act Frobeniusly on a finite group $N$, and let $G=N \rtimes H$.
	Assume that $A/B$ is a $G$-chief factor in $N$ of order $q^{m}$.
	Then the following hold.
	\begin{description}
		\item[(1)] If $H$ is cyclic, then $m$ is the multiplicative order of $q$ modulo $|H|$.
		\item[(2)] If $H$ is a quaternion group of order $8$, then $m=2$.
	\end{description}
\end{lem}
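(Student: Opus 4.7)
The plan is to reduce both parts to the analysis of a single faithful irreducible $\mathbb{F}_q[H]$-module, and then invoke either classical Galois theory or Lemma~\ref{lem: faith irred mod of Q8}. First, since $H$ acts Frobeniusly on $N$, the semidirect product $G = N \rtimes H$ is a Frobenius group with kernel $N$ and complement $H$; hence $(|H|, |N|) = 1$ and, by Thompson's theorem, $N$ is nilpotent. The chief factor $A/B$, being a minimal normal subgroup of $G/B$ of prime-power order $q^m$, is elementary abelian, and I will view it as an $m$-dimensional $\mathbb{F}_q$-vector space on which $G$ acts by conjugation.

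Next, I would show that $A/B$ is an irreducible faithful $\mathbb{F}_q[H]$-module. To see that $N$ acts trivially on $A/B$, note that $[N,A]B$ is a $G$-invariant subgroup of $A$ containing $B$, so by chief-factor minimality either $[N,A]\leq B$ or $[N,A]B = A$; in the latter case, iterating $[N/B,\,\cdot\,]$ in the nilpotent group $N/B$ would force $A/B = 1$, a contradiction. Hence $[N,A]\leq B$, the $G$-action on $A/B$ factors through $G/N \cong H$, and $A/B$ is $H$-irreducible. Faithfulness then follows by coprime action: for each $h \in H^{\sharp}$,
\[
\mathbf{C}_{A/B}(h) \;=\; \mathbf{C}_A(h) B/B \;\leq\; \mathbf{C}_N(h) B/B \;=\; 1.
\]

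Finally, I would settle each case. For part (1), $H$ is cyclic of order $n := |H|$; the faithful irreducible $\mathbb{F}_q[H]$-modules are indexed by Galois orbits of primitive $n$-th roots of unity over $\mathbb{F}_q$, and each has $\mathbb{F}_q$-dimension $[\mathbb{F}_q(\zeta_n):\mathbb{F}_q]$, which equals the multiplicative order $d$ of $q$ modulo $n$; hence $m = d$. For part (2), $H \cong \mathsf{Q}_8$, and Lemma~\ref{lem: faith irred mod of Q8} directly yields that any faithful irreducible $\mathbb{F}_q[H]$-module has dimension $2$, so $m = 2$. The only real obstacle is the second step, specifically the reduction to $[N,A]\leq B$; but with nilpotency of $N$ in hand, the lower-central-series iteration makes this immediate.
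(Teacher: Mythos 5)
Your proposal is correct and follows essentially the same route as the paper: use nilpotency of the Frobenius kernel $N$ to see that the $G$-chief factor $A/B$ is centralized by $N$, hence is a faithful irreducible $\mathbb{F}_q[H]$-module (faithfulness coming from the fixed-point-free, coprime action), and then read off the dimension — via the cyclic-group/Galois fact for (1), which is exactly the content of the Doerk–Hawkes result the paper cites, and via Lemma~\ref{lem: faith irred mod of Q8} for (2). Your write-up merely makes explicit the lower-central-series and coprime-centralizer details that the paper leaves implicit.
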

\begin{proof}
	Observe that $A/B\leq \mathbf{Z}(N/B)$ as $N$ is nilpotent,
	and hence $A/B$ is a faithful irreducible $H$-module over $\mathbb{F}_q$.
 If $H$ is cyclic, 
   then \cite[Chapter B, Theorem 9.8]{doerk1992} implies that $m$ is the multiplicative order of $q$ modulo $|H|$. 
  If $H$ is isomorphic to $\mathsf{Q}_8$,
  then, by Lemma \ref{lem: faith irred mod of Q8}, we conclude that $m=2$.
\end{proof}

\begin{lem}\label{lem: homogeneous mod and cod}
	Let $G$ be a Frobenius group with complement $H$ and elementary abelian kernel $V$.
	Suppose that $V$ is a homogeneous $H$-module over $\mathbb{F}_q$.  
	Then the following hold.
	\begin{description}
		\item[(1)] If $H$ is cyclic, then, for each $\lambda\in \mathrm{Irr}(V)^\sharp$, the cyclic $H$-module $\langle \lambda^{h}:h\in H\rangle$ is irreducible. 
		\item[(2)] Assume that $V$ is not $H$-irreducible.
		If $H$ is isomorphic to $\mathsf{Q}_8$, then there exists a $\lambda\in \mathrm{Irr}(V)^\sharp$
		such that the cyclic $H$-module $\langle \lambda^{h}:h\in H\rangle$ is not irreducible.
		Further, $\mathrm{cod}(\chi)=|V:\ker(\chi)|>q^{2}$ for each $\chi \in \mathrm{Irr}(G|\lambda)$.
	\end{description}
\end{lem}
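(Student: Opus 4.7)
The plan is to work via the standard identification of $\mathrm{Irr}(V)$, as an $H$-module under conjugation, with the dual $\mathbb{F}_q[H]$-module $V^{*}=\mathrm{Hom}_{\mathbb{F}_q}(V,\mathbb{F}_q)$. Under this identification the subgroup $\langle \lambda^{h}:h\in H\rangle$ coincides with the cyclic $\mathbb{F}_q[H]$-submodule $M_\lambda$ generated by $\lambda$, so ``irreducible'' in the statement just means simple as an $\mathbb{F}_q[H]$-module. Since $G$ is Frobenius with kernel $V$, the action of $H$ on $V$ is coprime, and homogeneity transfers to $V^{*}$ under the contragredient correspondence; thus $V^{*}\cong U^{t}$ as an $\mathbb{F}_q[H]$-module for some irreducible $U$, and the semisimplicity of $V^{*}$ is automatic.

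For part (1), with $H$ cyclic the algebra $\mathbb{F}_q[H]$ is commutative, so $\mathbb{F}_q[H]/\mathrm{Ann}(U)$ is a commutative simple algebra, hence a finite field $E$, and $U\cong E$ as an $E$-module. Any cyclic $\mathbb{F}_q[H]$-submodule of $V^{*}\cong E^{t}$ is then a cyclic $E$-module, and as $E$ is a field its only nonzero cyclic module is $E$ itself. So for $\lambda\in \mathrm{Irr}(V)^{\sharp}$ the module $M_\lambda$ is simple, which is part (1).

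For part (2), Lemma \ref{lem: faith irred mod of Q8} gives that $U$ is absolutely irreducible of $\mathbb{F}_q$-dimension $2$, whence $R:=\mathbb{F}_q[H]/\mathrm{Ann}(U)\cong M_{2}(\mathbb{F}_q)$ via its action on $U\cong \mathbb{F}_q^{2}$. Since $V$ is not $H$-irreducible, $t\geq 2$, so I would choose $\lambda\in V^{*}$ having two coordinates $\lambda_{1},\lambda_{2}\in U^{\sharp}$ in two distinct $U$-summands that span different lines of $U$, with zero coordinates elsewhere. The annihilator of $\lambda_{i}$ in $R$ is the maximal left ideal $\{M\in M_{2}(\mathbb{F}_q):M\lambda_{i}=0\}$, and these two maximal left ideals are distinct because $\lambda_{1},\lambda_{2}$ span different lines; their intersection is therefore $0$. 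Hence $M_\lambda\cong R/\mathrm{Ann}(\lambda)\cong R$ has $\mathbb{F}_q$-dimension $4$, so it splits as a direct sum of two copies of $U$ and is reducible.

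For the codegree claim, the Frobenius hypothesis forces $\mathrm{I}_{G}(\lambda)=V$, so $\chi:=\lambda^{G}$ is the unique member of $\mathrm{Irr}(G|\lambda)$, of degree $|H|$. Since induced characters vanish off $V$, $\ker(\chi)\leq V$ and $\ker(\chi)=\bigcap_{h\in H}\ker(\lambda^{h})$; transferring this through the isomorphism $\mathrm{Irr}(V)\cong V^{*}$ identifies $|V:\ker(\chi)|$ with $q^{\dim_{\mathbb{F}_q}M_\lambda}\geq q^{4}>q^{2}$, and then $\mathrm{cod}(\chi)=|G:\ker(\chi)|/\chi(1)=|V:\ker(\chi)|$ after the $|H|$-factors cancel. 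The main obstacle I anticipate is keeping the duality between $V$ and $V^{*}$ precisely straight and carrying out the annihilator computation in $M_{2}(\mathbb{F}_q)$ cleanly; the remaining steps are routine manipulations with Clifford theory and kernels of induced characters.
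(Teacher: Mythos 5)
Your proposal is correct, and it reaches the conclusion by a route that differs in mechanism from the paper's. You pass to the faithful Wedderburn component of $\mathbb{F}_q[H]$ acting on the homogeneous module $\mathrm{Irr}(V)\cong V^{*}$: for cyclic $H$ this component is a field $E$ with $U\cong E$, so every nonzero cyclic submodule is an $E$-line and hence simple; for $H\cong \mathsf{Q}_8$ you use Lemma \ref{lem: faith irred mod of Q8} to identify the component with $M_{2}(\mathbb{F}_q)$ and then construct an explicit $\lambda$ (two non-proportional coordinates in two homogeneous summands) whose annihilator is zero, so $M_\lambda\cong R\cong U\oplus U$ is reducible of order $q^{4}$. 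The paper instead argues by counting: it computes the number of irreducible $H$-submodules via $\mathrm{End}_{\mathbb{F}_q[H]}(W)$ (\cite[Chapter B, Proposition 8.2]{doerk1992}), and compares $|W^{\sharp}|\cdot|\Delta|$ with $|U^{\sharp}|$ --- equality in the cyclic case (so every nontrivial $\lambda$ lies in an irreducible submodule), strict inequality in the $\mathsf{Q}_8$ case (so some $\lambda$ lies in none, and its cyclic module, having no room to be $1$- or $2$-dimensional, exceeds $q^{2}$). Both arguments turn on the same dichotomy of endomorphism rings ($e=d$ for cyclic $H$ versus $e=1$, $d=2$ for $\mathsf{Q}_8$) and finish identically, with $\chi=\lambda^{G}$, $\ker(\chi)=\bigcap_{h\in H}\ker(\lambda^{h})$, and the duality $|V:\ker(\chi)|=|\langle\lambda^{h}:h\in H\rangle|$; your version is constructive and pins the codegree down to exactly $q^{4}$ for the chosen $\lambda$, while the paper's counting is shorter and non-constructive. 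Two small points to make explicit when writing it up: the Frobenius hypothesis gives $(q,|H|)=1$, which justifies semisimplicity of the $H$-action and (for $\mathsf{Q}_8$) that $q$ is odd so Lemma \ref{lem: faith irred mod of Q8} applies; and the annihilator of your $\lambda$ is the intersection of the annihilators of its coordinates because the $R$-action on $U^{t}$ is diagonal --- both are immediate, but worth a sentence each.
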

\begin{proof}
	Set $U=\mathrm{Irr}(V)$.
	  Then $H$ acts Frobeniusly on $U$.
	  Since $V$ is a faithful homogeneous $H$-module over $\mathbb{F}_q$,
	  $U$ is also a faithful homogeneous $H$-module over $\mathbb{F}_q$ 
	  by \cite[Lemma 1]{zhang2000}.
	  In other words, $U$ is a direct sum of $t$ copies of a faithful irreducible $H$-submodule $W$.
      Set $\mathbb{E}=\mathrm{End}_{\mathbb{F}_q[H]}(W)$.
	  	Then $\mathbb{E}$ is a finite field of order $q^{e}$ by Schur's Lemma and Wedderburn's little theorem.
	  Let $\Delta$ be the set of all irreducible $H$-submodules of $U$, and set $|W|=q^{d}$.
      Then an application of \cite[Chapter B, Proposition 8.2]{doerk1992} yields that  
	$$|\Delta|=\frac{|\mathbb{E}|^t-1}{|\mathbb{E}|-1}=\frac{q^{et}-1}{q^e-1}.$$

     Assume first that $H$ is cyclic.
	 Then $e=d$ by \cite[Chapter B, Theorem 9.8]{doerk1992}.
    Note that $A\cap B=1$ for each pair of distinct $A,B\in \Delta$ and that 
   \[
      |W^\sharp| \cdot |\Delta|=(q^{d}-1)\cdot \frac{q^{dt}-1}{q^d-1}=q^{dt}-1=|U^\sharp|,
   \]
	and so every $\lambda\in U^\sharp$ lies in an irreducible $H$-submodule of $U$.
	Consequently, part (1) holds.

    Assume now that $H\cong \mathsf{Q}_8$ and that $V$ is not $H$-irreducible.
	Then, by \cite[Lemma 1]{zhang2000}, $U$ is also not $H$-irreducible i.e. $t>1$.
    Also, as $H\cong \mathsf{Q}_8$, it follows by Lemma \ref{lem: faith irred mod of Q8} that $d=2$ and $e=1$.
    Note again that $A\cap B=1$ for each pair of distinct $A,B\in \Delta$
	and that
    \[
	  |W^\sharp| \cdot |\Delta|=(q^{2}-1)\cdot \frac{q^t-1}{q-1}=(q+1)(q^{t}-1)<q^{2t}-1=|U^\sharp|
	\]
	where the inequality holds as $t>1$.
    Therefore, there exists a $\lambda\in U^\sharp$ which does not lie in any irreducible submodule of $U$.
	In other words, the cyclic $H$-module $\Lambda:=\langle \lambda^h:h\in H\rangle$ is not irreducible.
	In particular, $|\Lambda|>q^{2}$. 
    Let $\chi\in \mathrm{Irr}(G|\lambda)$.
	As $G$ is a Frobenius group with abelian kernel $V$, it follows that $\chi=\lambda^G$ and $\ker(\chi)=\bigcap_{h\in H}\ker(\lambda)^h$.
    Noting that 
	$|V/\bigcap_{h\in H} \ker(\lambda^{h})|=|\Lambda|$ by \cite[\S 5, Theorem 5.5]{huppertcharactertheory},
	we conclude that $\mathrm{cod}(\chi)=|V/\ker(\chi)|=|\Lambda|>q^{2}$.
\end{proof}

\begin{lem}\label{lem: frob, abel ker, general case, cod}
	Let $G$ be a Frobenius group with cyclic complement $H$ and abelian kernel $A\in \mathrm{Syl}_{q}(G)$.
    If all $G$-chief factors in $A$ are isomorphic as a $d$-dimensional $H$-module over $\mathbb{F}_q$,
        then $A/\ker(\lambda^{G})$ is an $H$-indecomposable abelian group of order $o(\lambda)^{d}$ for each $\lambda\in \mathrm{Irr}(A)^\sharp$. 
		Moreover, 
		$$\mathrm{cod}(G|A)=\{ q^{kd} : 1\leq k\leq \log_{q}(\exp(A)) \}.$$
\end{lem}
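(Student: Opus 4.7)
The plan is to fix $\lambda\in\mathrm{Irr}(A)^\sharp$ with $o(\lambda)=q^k$, set $\chi:=\lambda^G\in\mathrm{Irr}(G)$ (irreducible since $G$ is Frobenius with abelian kernel), and analyze $B:=\ker(\chi)$. Because $\chi$ vanishes on $G\setminus A$, we have $B\leq A$, and the induced-character formula $\chi|_A=\sum_{h\in H}\lambda^h$ together with the distinctness of the $\lambda^h$ gives $B=\bigcap_{h\in H}\ker(\lambda)^h$. Since $\chi(1)=|H|$, this yields $\mathrm{cod}(\chi)=|A/B|$. Writing $\Lambda:=\langle\lambda^h:h\in H\rangle\leq\mathrm{Irr}(A)$, Pontryagin duality identifies $B$ as the common annihilator of $\Lambda$ in $A$, so $\Lambda=\mathrm{Irr}(A/B)$ and $|\Lambda|=|A/B|$. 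The task then reduces to showing that $A/B$ is $H$-indecomposable of order $o(\lambda)^d$.

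The central step is to prove $\Lambda/q\Lambda\cong W^\ast$, where $W^\ast$ denotes the contragredient of the common $H$-chief factor $W$ of $A$. Every $H$-composition factor of $\mathrm{Irr}(A)$, and hence of $\Lambda$, is isomorphic to $W^\ast$ since composition factors dualize. Because $H$ is cyclic with $(|H|,q)=1$, $\mathbb{F}_q[H]$ is a commutative semisimple ring, i.e.\ a product of finite fields, and every cyclic $\mathbb{F}_q[H]$-module is a direct sum of pairwise non-isomorphic simples. As $\Lambda/q\Lambda$ is cyclic (inherited from $\Lambda$), semisimple (by Maschke), and $W^\ast$-homogeneous, it must be a single copy of $W^\ast$, giving $\Lambda$ rank $d$ as an abelian group. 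Moreover, since $\Lambda$ is generated by characters of order $q^k$, one has $\exp(\Lambda)=q^k$, and by Pontryagin duality $\exp(A/B)=q^k$ as well.

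Next, I would show $A/B$ is $H$-indecomposable. If $A/B=X/B\times Y/B$ with both summands nontrivial $H$-invariant, then $\bar\lambda=\alpha\times\beta$ with both $\alpha,\beta$ nontrivial, and the chain $|\Lambda|\leq|\mathbb{Z}[H]\alpha|\cdot|\mathbb{Z}[H]\beta|\leq|X/B|\cdot|Y/B|=|A/B|=|\Lambda|$ forces $\Lambda=\mathbb{Z}[H]\alpha\times\mathbb{Z}[H]\beta$; reducing mod $q$ produces at least two copies of $W^\ast$ in $\Lambda/q\Lambda$, contradicting the previous paragraph. Since $G/B$ is again a Frobenius group with kernel $A/B$ (coprime Frobenius actions descend to quotients of the kernel), Harris's structure theorem applies and $A/B$ is homocyclic; combined with rank $d$ and exponent $q^k$, this yields $|A/B|=q^{dk}=o(\lambda)^d$.

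For the codegree set, each $\lambda\in\mathrm{Irr}(A)^\sharp$ contributes $\mathrm{cod}(\lambda^G)=o(\lambda)^d$, and because $\mathrm{Irr}(A)\cong A$ as abelian groups, $o(\lambda)$ realizes every value $q^k$ with $1\leq k\leq\log_q(\exp(A))$. Hence $\mathrm{cod}(G|A)=\{q^{kd}:1\leq k\leq\log_q(\exp(A))\}$. The main obstacle is the structural analysis in the second paragraph: identifying $\Lambda/q\Lambda$ as a single irreducible $H$-module via the product-of-fields structure of $\mathbb{F}_q[H]$, from which the $H$-indecomposability of $A/B$ and Harris's theorem then produce the homocyclic conclusion.
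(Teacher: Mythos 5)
Your argument is correct, and it reaches the same two pillars as the paper's proof --- the duality fact $|A/\ker(\lambda^{G})|=|\langle\lambda^{h}:h\in H\rangle|$ (the paper cites \cite[\S 5, Theorem 5.5]{huppertcharactertheory}, you rederive it via annihilators) and Harris's homocyclic result --- but the middle of the argument is genuinely different in execution. The paper inducts on $|G|$ to reduce to $\ker(\lambda^{G})=1$, then works at the socle level: it identifies $\Omega_{1}(\mathrm{Irr}(A))=\mathrm{Irr}(A/\Phi(A))$ as a cyclic homogeneous module, invokes Zhang's duality lemma and its own Lemma \ref{lem: homogeneous mod and cod}(1) to see this socle is irreducible of dimension $d$, and then applies Harris's theorem to get $H$-indecomposability of $\mathrm{Irr}(A)$ and hence of $A$. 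You avoid the induction and those two citations by working instead at the top quotient $\Lambda/q\Lambda$ and exploiting that $\mathbb{F}_q[H]\cong\mathbb{F}_q[x]/(x^{|H|}-1)$ is a product of fields, so cyclic modules are multiplicity-free; combined with the hypothesis that all composition factors dualize to a single class $W^{\ast}$, this pins down $\Lambda/q\Lambda\cong W^{\ast}$ and gives rank $d$, after which your order-counting argument yields indecomposability of $A/B$ directly and Harris's corollary supplies homocyclicity. Your route is more self-contained module-theoretically (the multiplicity-free fact for cyclic $H$ is exactly the mechanism hiding inside the paper's Lemma \ref{lem: homogeneous mod and cod}(1), which the paper needs anyway for the $\mathsf{Q}_8$ case), while the paper's route recycles its auxiliary lemmas and keeps the computation at the elementary-abelian layer. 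Two small points you should make explicit: in the decomposition step, the nontriviality of both $\alpha$ and $\beta$ follows because an $H$-invariant factor of $A/B$ lying in every $\ker(\bar{\lambda}^{h})$ must be trivial, as $B=\bigcap_{h\in H}\ker(\lambda^{h})$; and the statement that composition factors of $\mathrm{Irr}(A)$ are the contragredients of those of $A$ deserves a reference or a one-line proof via orthogonal complements (it plays the role of the paper's appeal to \cite[Lemma 1]{zhang2000}). Neither is a gap, just bookkeeping.
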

\begin{proof}
	We proceed by induction on $|G|$
	to show that $A/\ker(\lambda^{G})$ is an $H$-indecomposable abelian group of order $o(\lambda)^{d}$ for each $\lambda\in \mathrm{Irr}(A)^\sharp$.
	By induction, we may assume that $\ker(\lambda^G)=1$.
    As, by \cite[\S 5, Theorem 5.5]{huppertcharactertheory},
	$$|A/\ker(\lambda^G)|=|A/\bigcap_{h\in H}\ker(\lambda^h)|=|\langle \lambda^{h}:h\in H\rangle|,$$ 
	we deduce that $\mathrm{Irr}(A)=\langle \lambda^{h}:h\in H\rangle$.
	In particular, $\exp (\mathrm{Irr}(A))=\exp(A)=o(\lambda)$.
    So, \cite[\S 5, Proposition 5.8]{huppertcharactertheory} implies that $\mathrm{Irr}(A/\Phi(A))=\Omega_1(\mathrm{Irr}(A))=\langle \mu^h:h\in H\rangle$ where $\mu=\lambda^{o(\lambda)/q}$.
    Note that $G/\Phi(A)$ is a Frobenius group with cyclic complement $H\Phi(A)/\Phi(A)$ and elementary abelian kernel $A/\Phi(A)$ such that $A/\Phi(A)$ is a faithful homogeneous $H$-module over $\mathbb{F}_q$.
    In particular, $H$ acts Frobeniusly on $\mathrm{Irr}(A/\Phi(A))=\Omega_1(\mathrm{Irr}(A))$.
	Applying \cite[Lemma 1]{zhang2000},
	we also deduce that $\Omega_1(\mathrm{Irr}(A))=\mathrm{Irr}(A/\Phi(A))$ is a homogeneous $H$-module over $\mathbb{F}_q$.
	Thus, part (1) of Lemma \ref{lem: homogeneous mod and cod} implies that $\Omega_1(\mathrm{Irr}(A))$ is an irreducible $H$-module over $\mathbb{F}_q$ with dimension $d$.
	So, an application of \cite[Theorem]{harris1977} yields that $\mathrm{Irr}(A)$ is $H$-indecomposable.
    If $A=B\times C$ where $B$ and $C$ are $H$-invariant,
	then $\mathrm{Irr}(A)=\mathrm{Irr}(B)\times \mathrm{Irr}(C)$ where $\mathrm{Irr}(B)$ and $\mathrm{Irr}(C)$ 
	are also $H$-invariant.
	Thus, we deduce that $A$ is $H$-indecomposable.
    Recalling that $\exp(A)=o(\lambda)$, we conclude that $A$ is an $H$-indecomposable abelian group of order $o(\lambda)^{d}$.

    For $\lambda\in \mathrm{Irr}(A)^\sharp$, since $\lambda^{G}\in \mathrm{Irr}(G)$ has degree $|H|$,
	we have that $\mathrm{cod}(\lambda^{G})=|A/\ker(\lambda^{G})|=o(\lambda)^{d}$ where $q\leq o(\lambda)\leq \exp(A)$.
    Note that, for $1\leq k\leq \log_q(\exp(A))$, there exists a $\lambda\in \mathrm{Irr}(A)^\sharp$ such that $o(\lambda)=q^{k}$.
    Consequently, $\mathrm{cod}(G|A)=\{ q^{kd} : 1\leq k\leq \log_{q}(\exp(A)) \}$.
\end{proof}

\begin{lem}\label{lem: P act fpf on N/N'}
	Let $G=N \rtimes P$ where $N\in \mathrm{Syl}_{q}(G)$ and $P\cong \mathsf{C}_{p}$.
	Assume that $P$ acts Frobeniusly on $N/N'$.
	Then the following hold.
	\begin{description}
		\item[(1)] $G=\mathbf{O}^{p'}(G)$. In particular, $N=\mathbf{F}(G)$ is the unique maximal normal subgroup of $G$.
		\item[(2)] Assume that $\log_q(|N'|)$ is smaller than the multiplicative order of $q$ modulo $p$. 
		Then $N'=\mathbf{C}_{N}(P)$. 
		If, in addition $N'\leq \mathbf{Z}(N)$, then $N'=\mathbf{Z}(G)$. 
	\end{description}
\end{lem}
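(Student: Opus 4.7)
For part (1), my plan is to exploit the coprime Frobenius action of $P$ on $N/N'$. By Fitting's theorem for coprime action, $N/N' = [N/N',P]\,\mathbf{C}_{N/N'}(P) = [N,P]N'/N'$, giving $N = [N,P]\,N'$. Since $N$ is a $q$-group we have $N'\leq \Phi(N)$, and Burnside's basis theorem upgrades this to $N = [N,P]$. Each commutator $[n,x]$ with $x\in P$ equals $(x^{-1})^n\cdot x$ and therefore lies in the normal closure $\langle P^G\rangle = \mathbf{O}^{p'}(G)$; hence $N\leq \mathbf{O}^{p'}(G)$ and $G = \mathbf{O}^{p'}(G)$. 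For $\mathbf{F}(G) = N$, decompose $\mathbf{F}(G) = \mathbf{O}_p(G)\times\mathbf{O}_q(G)$: a nontrivial $\mathbf{O}_p(G)\leq P$ would centralize $N$ by nilpotency of $\mathbf{F}(G)$, contradicting the Frobenius action on $N/N'$. Finally, any maximal normal $M\lhd G$ has $G/M$ simple of prime order by solvability; the case $|G/M| = q$ is excluded by $G = \mathbf{O}^{p'}(G)$, and $|G/M| = p$ forces $N\leq M$ (since $N$ is a $q$-group), yielding $M = N$ by counting.

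For part (2), the inclusion $\mathbf{C}_N(P)\leq N'$ is immediate: for $n\in \mathbf{C}_N(P)$, the coset $\bar n\in N/N'$ is fixed by every element of $P$, so the Frobenius hypothesis forces $\bar n = 1$. For the reverse inclusion, I would refine a $P$-invariant series of $N'$ to a composition series whose factors are simple $\mathbb{F}_q[P]$-modules. Since $P\cong \mathsf{C}_p$, each such simple module has dimension $1$ (trivial) or $d$ (a faithful one), a fact that also underlies Lemma~\ref{lem: size of chief factor in Frob ker}(1). The hypothesis $\log_q|N'|<d$ forbids any composition factor of dimension $d$, so every factor is trivial. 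Coprime action then lifts this triviality back to $N'$: using the classical identity $[N',P] = [N',P,P]$, one descends along the series $1 = N_0\leq\cdots\leq N_r = N'$ via $[N',P] = [N',P,P]\leq [N_{r-1},P]\leq N_{r-2}$, iterating to conclude $[N',P] = 1$. Hence $N'\leq \mathbf{C}_N(P)$, and equality holds.

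Under the additional hypothesis $N'\leq \mathbf{Z}(N)$, the inclusion $N'\leq \mathbf{Z}(G)$ follows because $N'$ is centralized by both $N$ (by assumption) and $P$ (by the previous step), hence by $G = NP$. Conversely, take $z = np\in \mathbf{Z}(G)$ with $n\in N$, $p\in P$; commuting $z$ with an arbitrary $m\in N$ and reducing modulo $N'$ (using $[m,n]\in N'$) yields $\bar m^{p^{-1}} = \bar m$ in $N/N'$, so $p^{-1}$ acts trivially on $N/N'$. The Frobenius hypothesis forces $p = 1$, whence $z\in N\cap\mathbf{Z}(G)\leq \mathbf{C}_N(P) = N'$, establishing $\mathbf{Z}(G) = N'$. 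The main obstacle is the coprime-action descent that transfers triviality of $P$ on each $\mathbb{F}_q[P]$-composition factor to $N'$ itself; this relies on the identity $[N',P] = [N',P,P]$ applied iteratively through the series, and on verifying that the dimension bound $\log_q|N'|<d$ really kills every potentially faithful composition factor.
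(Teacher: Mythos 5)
Your proof is correct and follows essentially the same route as the paper: part (1) rests on the same combination of fixed-point-freeness on $N/N'$ plus the Frattini-subgroup argument to remove $N'$ (the paper phrases this via $\mathbf{O}^{p'}(G/N')=G/N'$ for the Frobenius quotient), and part (2) uses the same dimension bound on faithful irreducible $\mathbb{F}_q[\mathsf{C}_p]$-modules to force triviality on the factors inside $N'$, lifted to $N'$ by coprimality. The only difference is that you spell out steps the paper leaves implicit, such as the coprime descent via $[N',P]=[N',P,P]$ and the inclusion $\mathbf{Z}(G)\leq N'$, which can also be obtained directly from $\mathbf{Z}(G)\leq\mathbf{F}(G)=N=\mathbf{C}_N(P)N'$ using part (1).
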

\begin{proof}
	As $G/N'$ is a Frobenius group with kernel $N/N'\in \mathrm{Syl}_{q}(G/N')$ and complement $PN'/N'\in \mathrm{Syl}_{p}(G/N')$,
	it follows that $\mathbf{O}^{p'}(G/N')=G/N'$.
	Note that $\mathbf{O}^{p'}(G)N'/N'\geq \mathbf{O}^{p'}(G/N')$,
	and so $G=\mathbf{O}^{p'}(G)N'$.
	Since $N$ is a normal Sylow $q$-subgroup of $G$, $N'\leq \Phi(N)\leq \Phi(G)$.
	Consequently, $G=\mathbf{O}^{p'}(G)$.

	Assume that $\log_q(|N'|)$ is smaller than the multiplicative order of $q$ modulo $p$. 
    Then every $G$-chief factor in $N'$ is centralized by $P$ by part (1) of Lemma \ref{lem: size of chief factor in Frob ker}.
	Since $P$ acts Frobeniusly on $N/N'$,
	we have that $N'=\mathbf{C}_{N}(P)$.
	If, in addition $N'\leq \mathbf{Z}(N)$, then $N'=\mathbf{Z}(G)$.
\end{proof}

\begin{lem}\label{lem: dim of faithful module}
	Let $p,q,r$ be primes such that $p\neq q$ and $q\neq r$, 
	and 
	$G$ a Frobenius group with complement $P\cong \mathsf{C}_{p}$ and kernel $Q\cong \mathsf{C}_{q}$.
	If $V$ is a faithful irreducible $G$-module over $\mathbb{F}_r$, then one of the following holds.
	\begin{description}
		\item[(1)] $V$ is $Q$-irreducible, and $\dim_{\mathbb{F}_r}(V)$ is the multiplicative order of $r$ modulo $q$.
		\item[(2)] $V$ is not $Q$-irreducible, and $\dim_{\mathbb{F}_r}(V)$ equals $p$ times the multiplicative order of $r$ modulo $q$. 
	\end{description}
\end{lem}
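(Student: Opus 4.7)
The plan is to restrict $V$ to the normal cyclic subgroup $Q$ and apply Clifford's theorem. Since $q\neq r$, Maschke's theorem ensures that $V_Q$ is a semisimple $\mathbb{F}_r[Q]$-module, and since $Q\trianglelefteq G$ all irreducible constituents of $V_Q$ are $G$-conjugate.

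Next, I would observe that the number $t$ of homogeneous components of $V_Q$ divides $|G:Q|=p$, so $t\in\{1,p\}$. Case (1) corresponds to $t=1$ with $V_Q$ irreducible, and case (2) corresponds to $t=p$, in which case $V_Q=W_1\oplus\cdots\oplus W_p$ with all $W_i$ pairwise $G$-conjugate irreducible $\mathbb{F}_r[Q]$-submodules of the same $\mathbb{F}_r$-dimension. In either case, the key observation is that $V_Q$ must be faithful: otherwise, since $Q\cong\mathsf{C}_q$ is of prime order, any non-faithful irreducible constituent would have to be trivial, forcing $Q$ to act trivially on $V$; but then $Q$ would lie in the kernel of the representation, contradicting the faithfulness of $V$ as a $G$-module. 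So in case (1), $V$ itself is a faithful irreducible $\mathbb{F}_r[Q]$-module, and in case (2), each $W_i$ is a faithful irreducible $\mathbb{F}_r[Q]$-module.

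Then I would invoke the standard fact (a consequence of, e.g., \cite[Chapter B, Theorem 9.8]{doerk1992}, applied to the cyclic group $Q$) that every faithful irreducible $\mathbb{F}_r[\mathsf{C}_q]$-module has $\mathbb{F}_r$-dimension equal to the multiplicative order $d$ of $r$ modulo $q$. In case (1) this gives $\dim_{\mathbb{F}_r}(V)=d$ directly; in case (2) this gives $\dim_{\mathbb{F}_r}(W_i)=d$ for every $i$, hence $\dim_{\mathbb{F}_r}(V)=p\cdot d$.

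There is no real obstacle here beyond verifying the faithfulness of $V_Q$, which is immediate from the primality of $|Q|$; the rest is a routine application of Clifford's theorem combined with the cyclic-group dimension formula.
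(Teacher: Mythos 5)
Your overall strategy (restrict to $Q$, split into the $Q$-irreducible and non-$Q$-irreducible cases, then use that every faithful irreducible $\mathbb{F}_r[\mathsf{C}_q]$-module has dimension $d$, the multiplicative order of $r$ modulo $q$) is the same as the paper's, and your faithfulness argument for the constituents is fine. The gap is at the dichotomy step. Over a field that is not a splitting field, Clifford's theorem only gives $V_Q\cong e\,(W_1\oplus\cdots\oplus W_t)$ with the $W_i$ pairwise non-isomorphic $G$-conjugate irreducibles and a common multiplicity $e\geq 1$; it does \emph{not} say that $t=1$ forces $V_Q$ to be irreducible, nor that when $t=p$ each homogeneous component is irreducible, and you assert both without proof. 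The case $t=p$, $e>1$ is easy to repair (by Frobenius reciprocity $V$ is a quotient of $\mathrm{Ind}_Q^G W_1$, which has dimension $pd$, so $ped\leq pd$ and $e=1$), but the case $t=1$, $e>1$ is the real issue and cannot be handled by the tools you use: note that your argument never invokes the Frobenius hypothesis (the fixed-point-free action of $P$ on $Q$), and without it the statement is false. For instance, take $G=\mathsf{C}_3\times \mathsf{C}_5$ with $Q=\mathsf{C}_3$, $p=5$, $r=2$: the faithful irreducible $\mathbb{F}_2[G]$-module is $\mathbb{F}_{16}$ with a generator acting as a primitive $15$-th root of unity, of dimension $4$; its restriction to $Q$ is two copies of the $2$-dimensional irreducible $\mathbb{F}_2[\mathsf{C}_3]$-module (so $t=1$ but $V_Q$ is not irreducible), and $4\neq p\cdot d=10$. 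All hypotheses you actually use ($Q\unlhd G$ cyclic of prime order $q\neq r$, $|G:Q|=p$ a prime different from $q$) hold there, so the step ``$t=1$ implies $V_Q$ irreducible'' is exactly where the argument breaks.

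This is precisely the point that the paper's citation of \cite[Theorem 0.1, Lemma 2.2]{manzwolf1992} takes care of: using that $|G:Q|=p$ is prime \emph{together with} $\mathbf{C}_{G}(Q)=Q$ (the Frobenius condition), one obtains the strong dichotomy that either $V_Q$ is irreducible or $V_Q=V_1\oplus\cdots\oplus V_p$ with the $V_i$ pairwise non-isomorphic irreducibles, after which the dimension count is exactly as you wrote. If you want to avoid that citation, you must feed the Frobenius hypothesis into the argument, for example as follows: since $P$ acts without fixed points on the nontrivial characters of $Q$, every faithful irreducible $\overline{\mathbb{F}_r}[G]$-module restricts to $Q$ as a sum of $p$ \emph{distinct} characters (a free $P$-orbit), and non-isomorphic such modules have disjoint sets of $Q$-constituents; since $V\otimes_{\mathbb{F}_r}\overline{\mathbb{F}_r}$ is a multiplicity-free sum of pairwise non-isomorphic Galois conjugates, it follows that $V_Q$ is multiplicity-free, which rules out $e>1$ in both cases and completes your argument.
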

\begin{proof}
	Let $V$ be a faithful irreducible $G$-module over $\mathbb{F}_r$.
	Since $|G/Q|=p$ is a prime and $\mathbf{C}_{G}(Q)=Q$,
	\cite[Theorem 0.1, Lemma 2.2]{manzwolf1992}
	implies that either $V$ is a faithful irreducible $Q$-module
	over $\mathbb{F}_r$, or $V=V_1\oplus  \cdots \oplus  V_p$ where $V_i$ are non-isomorphic irreducible $Q$-module
	over $\mathbb{F}_r$.

	If the former holds, as $V$ is a faithful irreducible $Q$-module over $\mathbb{F}_r$ where $Q\cong \mathsf{C}_{q}$ and $q\neq r$,
	it follows by Lemma \ref{lem: size of chief factor in Frob ker} that $\dim_{\mathbb{F}_r}(V)$ is the multiplicative order of $r$ modulo $q$.
    Assume that the latter holds.
	Then all $V_i$ are faithful irreducible $Q$-modules over $\mathbb{F}_r$.
	In fact, otherwise $V_i$ are isomorphic to the trivial irreducible $Q$-module, a contradiction.
	Since $Q$ is a cyclic group of order $q$ such that $q\neq r$, $\dim_{\mathbb{F}_r}(V_i)$ is the multiplicative order of $r$ modulo $q$.
    As $\dim_{\mathbb{F}_r}(V)=p\cdot \dim_{\mathbb{F}_r}(V_i)$, part (2) holds.
\end{proof}

\begin{lem}\label{lem: 2-Frobenius}
	Let $p,q,r$ be primes, and $G=V \rtimes H$.
	Assume that  
	$H$ is a Frobenius group with complement $P\cong \mathsf{C}_{p}$ and kernel $Q\cong \mathsf{C}_{q}$,
	and that $VQ$ is a Frobenius group with complement $Q$ and kernel $V\cong (\mathsf{C}_{r})^{pm}$.
    If $q=\frac{r^{pm}-1}{r^{m}-1}$,
	then $V$ is minimal normal in $VQ$.	
\end{lem}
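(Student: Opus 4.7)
The plan is to show $V$ is irreducible as an $\mathbb{F}_r[Q]$-module; since every $VQ$-normal subgroup of the abelian group $V$ is automatically $Q$-invariant, this yields the conclusion. Because $VQ$ is Frobenius with kernel $V$ and complement $Q\cong\mathsf{C}_q$, no irreducible $\mathbb{F}_r[Q]$-submodule of $V$ is trivial (else it lies in $\mathbf{C}_V(Q)=1$), so by Lemma~\ref{lem: size of chief factor in Frob ker}(1) each such submodule has $\mathbb{F}_r$-dimension $d$ equal to the multiplicative order of $r$ modulo $q$. Maschke's theorem then decomposes $V$ as a direct sum of such summands, so $d\mid pm=\dim_{\mathbb{F}_r} V$, and the task reduces to showing $d=pm$.

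Suppose for contradiction that $d<pm$. First rule out $d\mid m$: then $q\mid r^m-1$, and the identity $q=\sum_{i=0}^{p-1}r^{im}$ forces $q\equiv p\pmod q$, hence $q=p$; but $H=Q\rtimes P$ being Frobenius requires $p\mid q-1$, contradicting $q=p$. Consequently $p\mid d$; writing $d=pd'$ with $d'\mid m$ and $d'<m$, put $s:=m/d'\ge 2$, and note that $\gcd(s,p)=1$ by comparing the $p$-parts of $d'$ and $m$.

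The crux is a size estimate after substituting $x:=r^{d'}$. Then $q\mid x^p-1$ and
\[
q=\frac{x^{ps}-1}{x^s-1}=\sum_{i=0}^{p-1}x^{is}.
\]
Since $\gcd(s,p)=1$, the map $i\mapsto is\bmod p$ permutes $\{0,1,\ldots,p-1\}$, so modulo $x^p-1$ one has $q\equiv\sum_{j=0}^{p-1}x^j=(x^p-1)/(x-1)$. Combined with $q\mid x^p-1$ this gives $q\mid (x^p-1)/(x-1)$, and hence $q\le (x^p-1)/(x-1)<x^p$. On the other hand the leading term yields $q>x^{(p-1)s}$, so $(p-1)s<p$, i.e., $s<p/(p-1)\le 2$, contradicting $s\ge 2$. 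Thus $d=pm$, $V$ is $Q$-irreducible, and is therefore minimal normal in $VQ$.

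The main obstacle is isolating the manipulation that produces a contradictory inequality: the substitution $x=r^{d'}$ together with the bijection $i\mapsto is\bmod p$ (valid since $\gcd(s,p)=1$) allows the defining sum of $q$ to be replaced modulo $x^p-1$ by the geometric sum $(x^p-1)/(x-1)$, after which a crude leading-term bound closes the argument. The remaining steps---reduction to $Q$-irreducibility, the dimension of an irreducible summand via Lemma~\ref{lem: size of chief factor in Frob ker}, and the elimination of $d\mid m$ from the Frobenius structure of $H$---are routine.
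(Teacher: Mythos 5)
Your proof is correct, but it takes a genuinely different route from the paper's. The paper argues in one stroke: assuming $V$ is not $Q$-irreducible, it invokes Lemma~\ref{lem: dim of faithful module} to conclude that $m$ is the multiplicative order of $r$ modulo $q$, and then the congruence $\frac{r^{pm}-1}{r^{m}-1}=r^{(p-1)m}+\cdots+r^{m}+1\equiv p\not\equiv 0\ (\mathrm{mod}\ q)$ contradicts $q=\frac{r^{pm}-1}{r^{m}-1}$. That appeal to Lemma~\ref{lem: dim of faithful module} treats $V$ as a faithful \emph{irreducible} $H$-module, a hypothesis not written into the statement of the lemma, though it is satisfied in the one place the lemma is applied (in the proof of Theorem~\ref{thm: height=3}, where $\overline{V}$ is the unique minimal normal subgroup of $\overline{G}$); note also that complete reducibility of $V$ as an $H$-module is not available in general, since $r=p$ is allowed. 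You avoid this entirely by working only with the multiplicative order $d$ of $r$ modulo $q$: after reducing to $d=pm$ via Maschke and Lemma~\ref{lem: size of chief factor in Frob ker}(1), you kill the case $d\mid m$ with exactly the same congruence as the paper (using $p\mid q-1$, hence $q\neq p$, from the Frobenius structure of $H$), and you dispose of the remaining case $d=pd'$ with $d'<m$ by the elementary estimate with $x=r^{d'}$: since $\gcd(s,p)=1$ (if $p\mid s$ then $d=pd'$ would divide $m$), the permutation $i\mapsto is\bmod p$ gives $q\mid\frac{x^{p}-1}{x-1}<x^{p}$, while $q>x^{(p-1)s}\geq x^{p}$ because $s\geq 2$, a contradiction. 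What your approach buys is a proof of the lemma exactly as stated, with no irreducibility assumption on $V$ as an $H$-module and with $P$ entering only through $q\neq p$; what the paper's approach buys is brevity, since the Clifford-theoretic dichotomy of Lemma~\ref{lem: dim of faithful module} settles the $Q$-structure immediately once $V$ is $H$-irreducible. All individual steps of your argument check out.
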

\begin{proof}
	Assume that $V$ is not minimal normal in $VQ$.
	Then $V$ is not $Q$-irreducible.
	By Lemma \ref{lem: dim of faithful module}, $m$ is the multiplicative order of $r$ modulo $q$.
	In particular, $q\mid r^{m}-1$.
	Therefore,
	\[
	 \frac{r^{pm}-1}{r^{m}-1}=r^{(p-1)m}+\cdots +r^{m}+1 \equiv p \nequiv 0~(\mathrm{mod}~q)
	\]
	which contradicts $q=\frac{r^{pm}-1}{r^{m}-1}$.
\end{proof}

\section{Solvable groups with Fitting height 2}

Let $G$ be a finite solvable group with Fitting height 2 and $N$ the nilpotent residual of $G$.
Then $N$ is contained in $\mathbf{F}(G)$.
Assume that 
$G/N$ is a $p$-group.
Then $(|N|,|G/N|)=1$.
So, the Schur-Zassenhaus theorem implies that 
$G=N \rtimes P$ where $P\in \mathrm{Syl}_{p}(G)$.

For a finite nilpotent group $G$, recall that $c(G)$ denotes the nilpotency class of $G$.

\begin{lem}\label{lem: |cod(G)|<=4}
	Let $G=N \rtimes P$ be a finite solvable group with Fitting height $2$ where $N$ is the nilpotent residual of $G$ and $P\in \mathrm{Syl}_{p}(G)$.
	Assume that $|\mathrm{cod}(G)|\leq 4$.
	Then the following hold.
	\begin{description}
		\item[(1)] $c(P)\leq 2$.
		\item[(2)] $N$ is a $q$-group contained in $\mathbf{F}(G)$.
	\end{description}
\end{lem}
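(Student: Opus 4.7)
The plan is to use the inclusion $\mathrm{cod}(P)\subseteq\mathrm{cod}(G)$ (via inflation from $G/N\cong P$) for part (1), and to count enough codegrees tied to distinct Sylow subgroups of $N$ to force $|\pi(N)|=1$ for part (2). For part (1), observe that $h(G)=2$ forces both $N\ne 1$ and $P\ne 1$. Pick any prime $q\in\pi(N)$; then $q\ne p$, and by Lemma~\ref{lem: basic facts on codegree}(3) some codegree of $G$ is divisible by $q$, hence is not a $p$-power and does not lie in $\mathrm{cod}(P)$. Thus $|\mathrm{cod}(P)|\le|\mathrm{cod}(G)|-1\le 3$, and the Du--Lewis theorem \cite{du2016} gives $c(P)\le 2$.

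For part (2), the inclusion $N\le\mathbf{F}(G)$ is immediate: $G/\mathbf{F}(G)$ nilpotent forces the nilpotent residual $N$ into $\mathbf{F}(G)$. To show $N$ is a $q$-group, suppose for contradiction that two distinct primes $q_1,q_2$ divide $|N|$, and let $Q_i$ denote the Sylow $q_i$-subgroup of $N$, which is characteristic in $N$ and hence normal in $G$. I will exhibit five pairwise distinct codegrees of $G$, contradicting $|\mathrm{cod}(G)|\le 4$. First, $1\in\mathrm{cod}(G)$. Second, Lemma~\ref{lem: basic facts on codegree}(3) applied to $P\cong G/N$ and then inflated yields a $p$-power $c_p\ge p$ in $\mathrm{cod}(G)$. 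Third, applying Lemma~\ref{lem: basic facts on codegree}(3) to $G/Q_2$ (whose order is divisible by $q_1$ and coprime to $q_2$) and inflating yields $c_1\in\mathrm{cod}(G)$ with $q_1\mid c_1$ and $q_2\nmid c_1$; symmetrically we get $c_2$ with $q_2\mid c_2$ and $q_1\nmid c_2$. Fourth, pick $\lambda_i\in\mathrm{Irr}(Q_i)^{\sharp}$ for $i=1,2$ and form the linear character $\lambda\in\mathrm{Irr}(N)$ equal to $\lambda_i$ on $Q_i$ and trivial on the other Sylow factors; then $\mathrm{cod}(\lambda)=o(\lambda_1)o(\lambda_2)$ is divisible by $q_1q_2$, and for any $\chi\in\mathrm{Irr}(G|\lambda)$ Lemma~\ref{lem: basic facts on codegree}(2) gives $c_{12}:=\mathrm{cod}(\chi)\in\mathrm{cod}(G)$ with $q_1q_2\mid c_{12}$.

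These five codegrees are pairwise distinct: $1$ is distinguished from $c_p$ by value (since $c_p\ge p$), and $c_p$ (a $p$-power, coprime to $q_1q_2$) is distinguished from $c_1,c_2,c_{12}$ by its $\{q_1,q_2\}$-part; while $c_1,c_2,c_{12}$ have $\{q_1,q_2\}$-parts that are a $q_1$-power, a $q_2$-power, and a multiple of $q_1q_2$ respectively, hence pairwise distinct. I anticipate no serious obstacles: part (1) is a one-line reduction to the cited Du--Lewis theorem, and part (2) is a routine enumeration, with the only mildly delicate point being the use of Lemma~\ref{lem: basic facts on codegree}(2) to transfer divisibility from $\mathrm{cod}(\lambda)$ to $\mathrm{cod}(\chi)$.
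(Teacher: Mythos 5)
Your proof is correct, and part (1) is essentially the paper's argument: both exhibit one codegree outside $\mathrm{cod}(P)$ (you via Lemma \ref{lem: basic facts on codegree}(3), the paper via Lemma \ref{lem: kernel, abelian, cod} applied to a chief factor $N/E$) to get $|\mathrm{cod}(P)|\le 3$ and then quote Du--Lewis. For part (2), however, your route is genuinely different from the paper's. The paper takes a minimal counterexample, reduces to the case $\Phi(G)=\mathbf{C}_P(N)=1$ with $N=\mathbf{F}(G)$ the socle, and then invokes part (1) of Lemma \ref{lem: large orbit} (which rests on Gasch\"utz's theorem) to produce too many codegrees with prescribed $\pi$-parts. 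You instead argue directly: writing the nilpotent group $N$ as the product of its characteristic Sylow subgroups $Q_1\times Q_2\times R$, you exhibit five codegrees $1,\,c_p,\,c_1,\,c_2,\,c_{12}$ distinguished by their $\{q_1,q_2\}$-parts, using only Lemma \ref{lem: basic facts on codegree}(2),(3) and inflation from the quotients $G/Q_i$. This is more elementary and self-contained (no induction, no completely reducible module structure), and in fact uses nothing beyond $N$ being a nilpotent Hall subgroup; the paper's version is shorter on the page only because Lemma \ref{lem: large orbit} is machinery it needs elsewhere anyway. One cosmetic point: you call $\lambda=\lambda_1\times\lambda_2\times 1$ linear and write $\mathrm{cod}(\lambda)=o(\lambda_1)o(\lambda_2)$, but the $Q_i$ need not be abelian; either choose the $\lambda_i$ to be nontrivial \emph{linear} characters (which always exist), or note that by Lemma \ref{lem: direct product, kernel and codegrees}(1),(2) (the components lie in groups of pairwise coprime order) one still gets $\mathrm{cod}(\lambda)=\mathrm{cod}(\lambda_1)\mathrm{cod}(\lambda_2)$, so the divisibility $q_1q_2\mid\mathrm{cod}(\lambda)\mid c_{12}$ that you actually need is unaffected.
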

\begin{proof}
    As $G$ has Fitting height $2$,
	it forces that the nilpotent residual $N$ of $G$ is contained in $\mathbf{F}(G)$.

	(1) Let $N/E$ be a $G$-chief factor.
	Applying Lemma \ref{lem: kernel, abelian, cod} to $G/E$,
    we deduce that $|N/E|\mid n$ for some $n\in \mathrm{cod}(G)$.
	Note that $|\mathrm{cod}(G)|\leq 4$ and that $n\notin \mathrm{cod}(G/N)=\mathrm{cod}(P)$, and hence $|\mathrm{cod}(P)|\leq 3$.
	Therefore, $c(P)\leq 2$ by \cite[Theorem 1.2]{du2016}.

	(2) Let $G$ be a counterexample of minimal possible order.
    By the minimality of $G$, we have $\mathbf{C}_{P}(N)=\Phi(N)=1$.
	So, the nilpotent group $N=\mathbf{F}(G)$ is the socle of $G$ and $\Phi(G)=1$.
	Since $|\mathrm{cod}(G)|\leq 4$ and $1,p\in \mathrm{cod}(G)$,
	part (1) of Lemma \ref{lem: large orbit} forces $N$ to be a $q$-group, a contradiction.
\end{proof}

\begin{hy}\label{hy: height=2, |cod(G/N)|=3}
   Let $G=N \rtimes P$ be a finite solvable group with Fitting height $2$ where $N$ is the nilpotent residual of $G$ and $P\in \mathrm{Syl}_{p}(G)$.
   Assume that $|\mathrm{cod}(G/N)|=3$.
\end{hy}

Assume Hypothesis \ref{hy: height=2, |cod(G/N)|=3} and that $|\mathrm{cod}(G)|=4$.
Then $P$ has nilpotency class at most 2 and $N$ is a $q$-group by Lemma \ref{lem: |cod(G)|<=4}.

\begin{lem}\label{lem: height=2, |cod(G/N)|=3, forinduction}
   Assume Hypothesis \ref{hy: height=2, |cod(G/N)|=3}
   and that $|\mathrm{cod}(G)|=4$. 
   If $D$ is a $G$-invariant proper subgroup of $N$,
	 then $|\mathrm{cod}(G/D)|=4$.
\end{lem}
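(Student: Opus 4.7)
The plan is to sandwich $|\mathrm{cod}(G/D)|$ between $3$ and $4$ and then exhibit an element of $\mathrm{cod}(G/D)$ outside $\mathrm{cod}(G/N)$, forcing equality with $4$.

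First I would set up the two easy inclusions. Inflation identifies $\mathrm{Irr}(G/D)$ with the set of $\chi\in\mathrm{Irr}(G)$ with $D\le\ker(\chi)$, and codegrees are preserved under this identification by Lemma \ref{lem: basic facts on codegree}(1); hence $\mathrm{cod}(G/D)\subseteq\mathrm{cod}(G)$, so $|\mathrm{cod}(G/D)|\le 4$. Since $D\le N$ we also have $\mathrm{Irr}(G/N)\subseteq\mathrm{Irr}(G/D)$, and the same lemma gives $\mathrm{cod}(G/N)\subseteq\mathrm{cod}(G/D)$, so $|\mathrm{cod}(G/D)|\ge 3$.

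Next I would pin down the arithmetic of $\mathrm{cod}(G/N)$. Since $G/N\cong P$ is a $p$-group, $\mathrm{cod}(G/N)=\mathrm{cod}(P)$ consists entirely of powers of $p$. By Lemma \ref{lem: |cod(G)|<=4}(2), $N$ is a $q$-group for some prime $q$, and $q\ne p$: otherwise $G=NP$ would be a $p$-group, hence nilpotent, contradicting that $G$ has Fitting height $2$.

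Finally I would produce a codegree in $\mathrm{cod}(G/D)\setminus\mathrm{cod}(G/N)$. Because $D$ is a proper $G$-invariant subgroup of $N$, the quotient $N/D$ is a nontrivial normal subgroup of $G/D$, so $\mathrm{Irr}(G/D\mid N/D)$ is nonempty; pick any $\chi$ in this set. Then $\chi$ does not have $N/D$ in its kernel, so $\chi|_{N/D}$ has a nontrivial irreducible constituent $\lambda$. Since $N/D$ is normal in $G/D$, Lemma \ref{lem: basic facts on codegree}(2) gives $\mathrm{cod}(\lambda)\mid\mathrm{cod}(\chi)$, and $\mathrm{cod}(\lambda)$ is a nontrivial power of $q$ as $\lambda$ is a nontrivial character of the $q$-group $N/D$. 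Thus $q\mid\mathrm{cod}(\chi)$, so $\mathrm{cod}(\chi)\notin\mathrm{cod}(G/N)$. Combining with the earlier inclusions yields $|\mathrm{cod}(G/D)|\ge 4$, and we conclude $|\mathrm{cod}(G/D)|=4$.

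There is no real obstacle here; the only point requiring care is verifying $p\ne q$ so that the $q$-divisible codegree is genuinely new, and checking that the irreducible constituent $\lambda$ of $\chi|_{N/D}$ is indeed nontrivial (which follows directly from $\chi\in\mathrm{Irr}(G/D\mid N/D)$).
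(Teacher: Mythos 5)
Your proof is correct. It follows the same overall strategy as the paper -- sandwich $|\mathrm{cod}(G/D)|$ between $|\mathrm{cod}(G/N)|=3$ (via $\mathrm{cod}(G/N)\subseteq\mathrm{cod}(G/D)\subseteq\mathrm{cod}(G)$) and $4$, then exhibit a codegree of $G/D$ divisible by $q$, hence outside the set of $p$-powers $\mathrm{cod}(G/N)$ -- but the mechanism for producing that extra codegree differs. The paper passes to a $G$-chief factor $N/E$ with $D\le E$ and invokes Lemma \ref{lem: kernel, abelian, cod} to get that the full order $|N/E|$ divides some member of $\mathrm{cod}(G/E)\subseteq\mathrm{cod}(G/D)$; you instead take any $\chi\in\mathrm{Irr}(G/D\mid N/D)$ and use Lemma \ref{lem: basic facts on codegree}(2) (divisibility of codegrees under restriction to the normal $q$-group $N/D$) to conclude $q\mid\mathrm{cod}(\chi)$. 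Your route is a bit more elementary, needing only part (2) (or (3)) of Lemma \ref{lem: basic facts on codegree} rather than the chief-factor reduction and Lemma \ref{lem: kernel, abelian, cod}; the paper's version buys the stronger conclusion that a whole chief-factor order divides the new codegree, which is in the spirit of arguments it reuses later, though it is not needed for this lemma. All the points you flag are handled correctly: $q\neq p$ is automatic from $(|N|,|G/N|)=1$ (your nilpotency argument also works), the constituent $\lambda$ of $\chi_{N/D}$ is nontrivial by Clifford's theorem since $N/D\not\le\ker(\chi)$, and $\mathrm{cod}(\lambda)>1$ for nontrivial $\lambda$.
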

\begin{proof}
    Let $N/E$ be a $G$-chief factor such that $D\leq E$.
   Then $|N/E|\mid n$ for some $n\in \mathrm{cod}(G/E)$ by Lemma \ref{lem: kernel, abelian, cod}. 
   As $\mathrm{cod}(G/N)=\{ 1,p,p^{k} \}\subseteq \mathrm{cod}(G/E)$ where $k>1$,
   we deduce that $|\mathrm{cod}(G/E)|\geq 4$.
   Noting that $\mathrm{cod}(G/E)\subseteq \mathrm{cod}(G/D)\subseteq \mathrm{cod}(G)$,
   we conclude that $|\mathrm{cod}(G/D)|=4$.
\end{proof}

Let a finite group $P$ act via automorphism on a finite group $V$.
Then $P$ acts as a permutation group on $V^\sharp$.
We say this action is \emph{$\frac{1}{2}$-transitive} if every orbit shares the same size.

\begin{prop}\label{prop: height=2, |cod(G/N)|=3, N unique minimal normal}
	Assume Hypothesis \ref{hy: height=2, |cod(G/N)|=3} and that $N$ is the unique minimal normal subgroup of $G$.
	If $|\mathrm{cod}(G)|=4$, then one of the following holds.
	\begin{description}
		\item[(1)] $G$ is a Frobenius group with complement $P$ and kernel $N$ such that one of the following holds.
		 \begin{description}
			\item[(1a)] $P\cong \mathsf{C}_{p^{2}}$, $N\cong (\mathsf{C}_{q})^{d}$ where
			$d$ is the multiplicative order of $q$ modulo $p^{2}$,
			and $\mathrm{cod}(G)=\{ 1,p,p^{2},q^{d} \}$.
			\item[(1b)] $P\cong \mathsf{Q}_8$, $N\cong (\mathsf{C}_{q})^{2}$, and $\mathrm{cod}(G)=\{ 1,2,4,q^{2} \}$.
		 \end{description}
		 \item[(2)]
		 $\mathrm{I}_{P}(\lambda)$ is a non-normal subgroup of order $2$ of $P$ for each $\lambda \in \mathrm{Irr}(N)^\sharp$,
		 and one of the following holds.
		 \begin{description}
			\item[(2a)] $P\cong \mathsf{D}_{8}$, $N\cong (\mathsf{C}_{3})^{2}$, and $\mathrm{cod}(G)=\{ 1,2,4, 18 \}$.
			\item[(2b)] $P\cong\mathsf{SmallGroup}(16,13)$, $N\cong (\mathsf{C}_{5})^{2}$, and $\mathrm{cod}(G)=\{ 1,2,8,50 \}$.
			\item[(2c)] $P\cong \mathsf{ES}(2^{5}_{-})$, $N\cong (\mathsf{C}_{3})^{4}$, and $\mathrm{cod}(G)=\{ 1,2,8,162 \}$.
		 \end{description}
	\end{description}
\end{prop}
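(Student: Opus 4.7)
The plan is to split on whether the faithful action of $P$ on $N$ is Frobenius.

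\textbf{Setup.} Since $N$ is the unique minimal normal subgroup and $N$ is a $q$-group by Lemma~\ref{lem: |cod(G)|<=4}, $N$ is elementary abelian; minimality of $N$ and $\mathbf{C}_{P}(N)\lhd G$ force $\mathbf{C}_{P}(N)=1$, so $P$ acts faithfully and irreducibly on $N$. Since $\mathrm{cod}(G/N)=\mathrm{cod}(P)$ has cardinality $3$ and sits inside the $4$-element set $\mathrm{cod}(G)$, the set $\mathrm{cod}(G|N)$ collapses to a single value $c$. For $\lambda\in\mathrm{Irr}(N)^{\sharp}$ with stabilizer $T_\lambda:=\mathrm{I}_{P}(\lambda)$, Lemma~\ref{lem: kernel, abelian, cod} combined with the uniqueness of $N$ forces $\ker(\chi)=1$ for every $\chi\in\mathrm{Irr}(G|\lambda)$, and Clifford's theorem writes $\chi=\psi^G$ with $\psi\in\mathrm{Irr}(NT_\lambda\,|\,\lambda)$, yielding
\[
\mathrm{cod}(\chi)=\frac{|N|\,|T_\lambda|}{\psi(1)},\qquad\psi(1)^{2}\le|T_\lambda|.
\]

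\textbf{Frobenius case ((1a)--(1b)).} Assume $T_\lambda=1$ for every nontrivial $\lambda$. Then $G$ is a Frobenius group with complement $P$, so $P$ is a $p$-group Frobenius complement, hence cyclic or generalized quaternion. The condition $|\mathrm{cod}(P)|=3$ pins $P\cong\mathsf{C}_{p^{2}}$ or $P\cong\mathsf{Q}_{8}$. Lemma~\ref{lem: faith irred mod of Q8} gives $N\cong(\mathsf{C}_{q})^{2}$ in the quaternion case; Lemma~\ref{lem: size of chief factor in Frob ker} gives $N\cong(\mathsf{C}_{q})^{d}$ with $d$ the multiplicative order of $q$ modulo $p^{2}$ in the cyclic case. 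Then $\chi=\lambda^{G}$ has $\mathrm{cod}(\chi)=|N|$, completing (1a) and (1b).

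\textbf{Non-Frobenius case: no regular orbits; non-normal stabilizers.} Now some $T_\lambda\ne 1$. If there were also $\mu$ with $T_\mu=1$, then $\mu^{G}$ would contribute codegree $|N|$, while $\chi\in\mathrm{Irr}(G|\lambda)$ would contribute $|N|\,|T_\lambda|/\psi(1)\ge|N|\sqrt{|T_\lambda|}>|N|$, contradicting $|\mathrm{cod}(G|N)|=1$. Hence every $T_\lambda\ne 1$. If $T_\lambda$ were normal in $P$, then normality would imply $T_\lambda\subseteq T_{g\lambda}$ for every $g\in P$, so $T_\lambda$ would stabilise the entire $P$-orbit of $\lambda$, hence the $P$-submodule of $\mathrm{Irr}(N)\cong N^{\ast}$ it spans; irreducibility of this dual module forces this span to be all of $\mathrm{Irr}(N)$, giving $T_\lambda\le\mathbf{C}_{P}(N)=1$, a contradiction. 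Thus each $T_\lambda$ is nontrivial and non-normal in $P$.

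\textbf{Identification (main obstacle, yielding (2a)--(2c)).} The equation $\mathrm{cod}(\chi)=|N|\,|T_\lambda|/\psi(1)=c$ forces all $\psi\in\mathrm{Irr}(NT_\lambda|\lambda)$ to share a common degree and $|T_\lambda|/\psi(1)$ to be a constant $p$-power $m=c/|N|$ independent of $\lambda$. A short Clifford-theoretic argument, using $c(P)\le 2$ and $|\mathrm{cod}(P)|=3$, rules out $\psi(1)>1$; one is left with $\psi(1)=1$, every $T_\lambda$ abelian of the same order $m$, and $\lambda$ extending to $NT_\lambda$. The substantive remaining step is a classification: under $c(P)\le 2$, $|\mathrm{cod}(P)|=3$, and existence of a faithful irreducible $\mathbb{F}_{q}[P]$-module $N$ on which $P$ acts $\frac{1}{2}$-transitively on $\mathrm{Irr}(N)^{\sharp}$ with every stabilizer abelian non-normal of common order $m$, the only admissible triples $(p,P,N)$ are $p=2$, $m=2$ and $(P,N)\in\{(\mathsf{D}_{8},(\mathsf{C}_{3})^{2}),(\mathsf{SmallGroup}(16,13),(\mathsf{C}_{5})^{2}),(\mathsf{ES}(2^{5}_{-}),(\mathsf{C}_{3})^{4})\}$. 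This reduction combines the non-normality bound $|P:T_\lambda|\ge p^{2}$ with the orbit equation $|P:T_\lambda|\cdot n_{\mathrm{orb}}=|N|-1$ to exclude odd $p$, and then inspects the residual small $2$-group cases directly; in each resulting triple $c=2|N|$ recovers $18$, $50$, $162$, proving (2a)--(2c).
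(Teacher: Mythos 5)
Your setup, your Frobenius case, and your reduction of the non-Frobenius case track the paper's argument correctly: $\ker(\chi)=1$ for $\chi\in\mathrm{Irr}(G|N)$ because $N$ is the unique minimal normal subgroup, $\mathrm{cod}(G|N)$ is a single value since every such codegree is divisible by $q$, hence $|\mathrm{I}_{P}(\lambda)|$ is constant on $\mathrm{Irr}(N)^{\sharp}$ (i.e.\ $P$ acts $\frac{1}{2}$-transitively), and in the fixed-point-free case $|\mathrm{cod}(P)|=3$ and $c(P)\le 2$ do pin $P\cong\mathsf{C}_{p^{2}}$ or $\mathsf{Q}_{8}$ with the stated $N$ and codegree sets. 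Your observations that in the non-Frobenius case all stabilizers are nontrivial, abelian of a common order, and non-normal are also sound (incidentally, ruling out $\psi(1)>1$ needs neither $c(P)\le 2$ nor $|\mathrm{cod}(P)|=3$: by Gallagher the degrees occurring over $\lambda$ in $N\mathrm{I}_{P}(\lambda)$ are exactly the degrees of $\mathrm{Irr}(\mathrm{I}_{P}(\lambda))$, and the trivial character is always among them, so constancy of the codegree forces all of them to be $1$).

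The genuine gap is the final ``Identification'' paragraph, which is where all the substance of part (2) lies. You assert, essentially without proof, that the only admissible configurations are $p=2$, $m=2$ and the three pairs $(P,N)$ of (2a)--(2c). The sketch offered does not deliver this: the non-normality bound $|P:\mathrm{I}_{P}(\lambda)|\ge p^{2}$ combined with the orbit equation $|P:\mathrm{I}_{P}(\lambda)|\cdot n_{\mathrm{orb}}=|N|-1$ only says that $p^{2}$ divides $|N|-1$, which excludes no odd prime $p$; and nothing in your argument bounds $|P|$ or $|N|$, so there is no finite list of ``residual small $2$-group cases'' to inspect (note that $c(P)\le 2$ and $|\mathrm{cod}(P)|=3$ hold, for instance, for every extraspecial $p$-group, so these constraints alone leave infinitely many candidates). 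What is needed at exactly this point is the classification of half-transitive, non-semiregular automorphism groups: the paper, having established $\frac{1}{2}$-transitivity of $P$ on $\mathrm{Irr}(N)^{\sharp}$ with $\mathbf{C}_{P}(\mathrm{Irr}(N))=1$ and $c(P)\le 2$, invokes Isaacs--Passman's Theorem II from \cite{isaacspassman1966}, which forces either a Frobenius action or $|\mathrm{I}_{P}(\lambda)|=2$ for all $\lambda$ together with an explicit list (i)--(iii) of possible groups, and then checks that list against $|\mathrm{cod}(G)|=4$ by computation (GAP) to obtain (2a)--(2c). Without citing this theorem (or redoing its work), your claims $p=2$, $|\mathrm{I}_{P}(\lambda)|=2$, and the determination of the three pairs $(P,N)$ remain unproven, so the proof of part (2) is incomplete.
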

\begin{proof}
  Since $N$ is the unique minimal normal subgroup of the solvable group $G$, $N=\mathbf{F}(G)$ is an elementary abelian $q$-group.
  In particular, $\Phi(G)=1$.

  Let $\lambda$ be a nontrivial character in $\mathrm{Irr}(N)$ and set $T=\mathrm{I}_{G}(\lambda)$.
  Note that $(|T/N|,|N|)=1$,
  and hence $\lambda$ extends to some $\hat{\lambda}\in \mathrm{Irr}(T)$.
  So, Clifford's correspondence yields that $\hat{\lambda}^{G}\in \mathrm{Irr}(G)$.
  Since $N=\mathbf{F}(G)$ is minimal normal in $G$,
  it follows by Lemma \ref{lem: kernel, abelian, cod} that $\ker(\hat{\lambda}^{G})=1$.
  So,
  \[
   \mathrm{cod}(\hat{\lambda}^{G})=\frac{|G|}{\hat{\lambda}^{G}(1)}=|T|=|N|\cdot |\mathrm{I}_{P}(\lambda)|.
  \]
  Note that $\mathrm{cod}(\hat{\lambda}^{G})\notin \mathrm{cod}(G/N)$ and that $|\mathrm{cod}(G)|=4$,
  and so $|\mathrm{I}_{P}(\lambda)|$ is a constant for each $\lambda\in \mathrm{Irr}(N)^\sharp$.
  Set $V=\mathrm{Irr}(N)$.
  Hence, $P$ acts $\frac{1}{2}$-transitively on $V^\sharp$.
  Note that $\mathbf{C}_{P}(V)=\mathbf{C}_{P}(N)=1$ by \cite[Lemma 1]{zhang2000} and that $c(P)\leq 2$ by part (1) of Lemma \ref{lem: |cod(G)|<=4}.
  Applying \cite[Theorem II]{isaacspassman1966}, we conclude that either $G$ is a Frobenius group with complement $P$ and kernel $N$, 
  or $|\mathrm{I}_{P}(\lambda)|=2$ for each $\lambda \in V^\sharp$.
  Assume that the latter holds. 
  Checking the groups listed in (i)-(iii) of \cite[Theorem II]{isaacspassman1966} case by case and applying $\mathsf{GAP}$ \cite{gap},
  we conclude that part (2) holds.
  Assume that the former holds.
  Then the Frobenius complement $P$ is either cyclic or generalized quaternion.
  As $c(P)\leq 2$ and $|\mathrm{cod}(P)|=|\mathrm{cod}(G/N)|=3$,
  the Sylow $p$-subgroup $P$ is isomorphic to either $\mathsf{C}_{p^{2}}$ or $\mathsf{Q}_8$.
  If $P$ is isomorphic to $\mathsf{C}_{p^{2}}$,
  then (1a) follows from \cite[Chapter B, Theorem 9.8]{doerk1992}, Lemma \ref{lem: large orbit} and a direct computation. 
  If $P$ is isomorphic to $\mathsf{Q}_8$,
  then (1b) is obtained via Lemmas \ref{lem: faith irred mod of Q8}, \ref{lem: large orbit} and the corresponding calculation.
\end{proof}

\begin{lem}\label{lem: height=2, |cod(G/N)|=3, C_P(N)=1}
	Assume Hypothesis \ref{hy: height=2, |cod(G/N)|=3} and that $|\mathrm{cod}(G)|=4$.
	If $D$ is a $G$-invariant proper subgroup of $N$,
	then $\mathbf{C}_{PD/D}(N/D)=1$.
	In particular, $N/D=\mathbf{F}(G/D)$ is a $q$-group.
\end{lem}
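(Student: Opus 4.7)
My plan has three stages: reduce to the special case $D=1$, further reduce within that case to $N$ being $G$-minimal normal, and then derive a contradiction via a central-extension analysis.

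First, the identity $\gamma_\infty(G/D)=\gamma_\infty(G)D/D=N/D$ (valid because $D\le N$) shows that the nilpotent residual of $G/D$ is $N/D$; so $G/D$ again has Fitting height $2$ and Sylow $p$-subgroup $PD/D\cong P$, with $(G/D)/(N/D)\cong G/N$ forcing $|\mathrm{cod}((G/D)/(N/D))|=3$. Combined with Lemma~\ref{lem: height=2, |cod(G/N)|=3, forinduction}, this means that $G/D$ satisfies Hypothesis~\ref{hy: height=2, |cod(G/N)|=3} with $|\mathrm{cod}(G/D)|=4$. Consequently, it suffices to prove the case $D=1$, i.e.\ $\mathbf{C}_P(N)=1$; applying that special case to $G/D$ yields the general statement. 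The ``in particular'' part then follows since $\mathbf{F}(G/D)=\mathbf{O}_q(G/D)\times \mathbf{O}_p(G/D)$ with $N/D\le \mathbf{O}_q(G/D)$ and $\mathbf{O}_p(G/D)\le \mathbf{C}_{PD/D}(N/D)=1$ (via $[\mathbf{O}_p(G/D),N/D]\le \mathbf{O}_p(G/D)\cap N/D=1$).

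Now suppose for contradiction $\mathbf{C}_P(N)\neq 1$. A short commutator argument, together with the observation that $\mathbf{C}_P(N)$ is the unique Sylow $p$-subgroup of the normal subgroup $\mathbf{C}_G(N)=\mathbf{Z}(N)\times \mathbf{C}_P(N)\trianglelefteq G$, shows that $\mathbf{C}_P(N)=\mathbf{O}_p(G)$. I would induct on $|G|$: if $N$ admits a nontrivial proper $G$-invariant subgroup $D$, the inductive hypothesis applied to $G/D$ yields $\mathbf{C}_{PD/D}(N/D)=1$; combined with the inclusion $\mathbf{C}_P(N)\cdot D/D\le \mathbf{C}_{PD/D}(N/D)$ and $\mathbf{C}_P(N)\cap D\le P\cap N=1$, this forces $\mathbf{C}_P(N)=1$, a contradiction. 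Hence we may further assume that $N$ is $G$-minimal normal, an elementary abelian $q$-group irreducible as a $P$-module.

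Pick then a $G$-minimal normal subgroup $L\le \mathbf{O}_p(G)$. Since a $p$-group acts trivially on any irreducible $\mathbb{F}_p$-module, $|L|=p$, $L\le \mathbf{Z}(P)$, and (because $N$ centralizes $L$) $L\le \mathbf{Z}(G)$. To reach a contradiction I apply the induction to $G/L$: in the favourable case that Hypothesis~\ref{hy: height=2, |cod(G/N)|=3} and $|\mathrm{cod}(G/L)|=4$ persist, the inductive hypothesis yields $\mathbf{O}_p(G/L)=1$, so $\mathbf{O}_p(G)=L$ and $N$ is the unique minimal normal subgroup of $G/L$. Proposition~\ref{prop: height=2, |cod(G/N)|=3, N unique minimal normal} then restricts $G/L$ to the explicit list in its cases (1a)-(2c); I would analyse each central extension $G\to G/L$ by $L\cong \mathsf{C}_p$ using Lemma~\ref{lem: direct product, kernel and codegrees}(2) and the fact that nonlinear characters of a Frobenius group $H$ have $\mathbf{Z}(\chi)/\ker(\chi)=1$ (since $\mathbf{Z}(H)=1$ and such $\chi$ vanish off the kernel), to exhibit in each case a codegree of the shape $p\cdot q^{?}$ lying in $\mathrm{cod}(G)\setminus \mathrm{cod}(G/L)$, contradicting $|\mathrm{cod}(G)|=4$.

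The main obstacle is the degenerate situation in which Hypothesis~\ref{hy: height=2, |cod(G/N)|=3} or the four-codegree condition fails for $G/L$, which happens when $|\mathrm{cod}(P/L)|<3$; the prototype is $P\cong \mathsf{C}_{p^2}$. There I would bypass the induction by a direct character computation: the $P$-irreducibility of $N$ together with $L=\mathbf{C}_P(N)$ forces $P/L\cong \mathsf{C}_p$ to act Frobeniusly on $N\cong (\mathsf{C}_q)^d$ (with $d$ the multiplicative order of $q$ modulo $p$), so $\mathrm{I}_G(\theta)=NL$ for every nontrivial $\theta\in \mathrm{Irr}(N)$; computing $\ker((\theta\times \mu)^G)$ for $\mu\in \mathrm{Irr}(L)$ via the irreducibility-implied triviality of $\bigcap_h \ker(\theta^h)$ produces the extra codegrees $q^d$ and $pq^d$, whence $|\mathrm{cod}(G)|\ge 5$, again contradicting the hypothesis. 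Carrying out this same kind of explicit calculation uniformly across the remaining problematic $p$-group possibilities for $P$ is the main technical task.
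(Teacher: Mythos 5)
Your skeleton matches the paper's: reduce (via Lemma \ref{lem: height=2, |cod(G/N)|=3, forinduction} and induction) to the case where $N$ is minimal normal and $C:=\mathbf{C}_{P}(N)>1$, then split into a ``degenerate'' branch where a quotient of $P$ has fewer than three codegrees and a ``structured'' branch governed by Proposition \ref{prop: height=2, |cod(G/N)|=3, N unique minimal normal}. Your degenerate branch is essentially the paper's own argument, and it is uniform: once $P/\mathbf{C}_P(N)$ is elementary abelian (Lemma \ref{lem: |codG|=2}) and acts faithfully and irreducibly on $N$, it must be $\mathsf{C}_p$ acting Frobeniusly, and the computation with $\lambda\in\mathrm{Irr}(N)^\sharp$ and a nontrivial linear $\mu$ of $\mathbf{C}_P(N)$ gives the extra codegree $q^{d}\cdot|\mathbf{C}_P(N):\ker(\mu^{G})|$; there is no list of ``problematic $p$-groups'' to work through, so the task you defer there is not where the difficulty lies. (Also note a slip: in that branch you write $L=\mathbf{C}_P(N)$, but $L$ was only chosen minimal normal inside $\mathbf{C}_P(N)$; you must run the computation with $C=\mathbf{C}_P(N)$ itself, as above.)

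The genuine gap is the final step of your favourable case. Lemma \ref{lem: direct product, kernel and codegrees}(2) computes codegrees only in a group that \emph{is} a direct product, and your auxiliary fact about $\mathbf{Z}(\chi)/\ker(\chi)$ concerns Frobenius groups; to use them you would need $G\cong (G/L)\times L$ with $G/L$ Frobenius. Neither is available: in cases (2a)--(2c) of Proposition \ref{prop: height=2, |cod(G/N)|=3, N unique minimal normal} the quotient $G/L$ is not a Frobenius group, and the central extension $1\to L\to P\to P/L\to 1$ need not split under the only constraints in force here ($c(P)\le 2$, $|\mathrm{cod}(P)|=3$). Concretely, $P=\mathsf{C}_4\rtimes\mathsf{C}_4$ has class $2$ and $\mathrm{cod}(P)=\{1,2,4\}$, and for the central involution $L$ with $P/L\cong\mathsf{D}_8$ there is no complement (every maximal subgroup of $P$ is abelian and contains $\Phi(P)\ni L$); so a putative $G=N\rtimes P$ with $N\cong(\mathsf{C}_3)^2$ and the action factoring through $\mathsf{D}_8$ as in (2a) is exactly a configuration your favourable case must exclude, yet it is not a direct product over $L$ and its quotient is not Frobenius, so your cited tools say nothing about it. What is needed is the Clifford-theoretic computation: for $\lambda\in\mathrm{Irr}(N)^\sharp$ and $\mu\in\mathrm{Irr}(L)^\sharp$, read off $\mathrm{I}_G(\lambda\times\mu)$ from the structure of $G/L$ (it is $NL$, or of index $p=2$ over $NL$ in the half-transitive cases), check $\ker(\chi)=1$ using $\mathbf{F}(G)=N\times L$, extend $\lambda\times\mu$ to its inertia group and apply Gallagher, obtaining $\mathrm{cod}(\chi)=pq^{d}\cdot|\mathrm{I}_G(\lambda\times\mu):NL|\notin\mathrm{cod}(G)=\mathrm{cod}(G/L)$. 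Without this (or a proof that the extension splits, which is false in general), the favourable branch is not closed; this inertia-group computation is precisely how the paper finishes.
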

\begin{proof}
	Let $G$ be a counterexample of minimal possible order.
	Let $N/E$ be a $G$-chief factor such that $D\leq E$.
	Note that 
		 $$\frac{\mathbf{C}_{PD/D}(N/D)E/D}{E/D}$$
	is isomorphic to a subgroup of $\mathbf{C}_{PE/E}(N/E)$,
	and so $\mathbf{C}_{PE/E}(N/E)=1$ implies that $\mathbf{C}_{PD/D}(N/D)=1$.
    Since, by Lemma \ref{lem: height=2, |cod(G/N)|=3, forinduction}, $G/E$ satisfies the hypothesis of this lemma,
	we deduce that $D=E=1$ by the minimality of $G$.
    In other words, $N$ is minimal normal in the solvable group $G$.

	Set $C=\mathbf{C}_{P}(N)$. We claim now that $\mathrm{cod}(P/C)=\mathrm{cod}(P)$.
	Otherwise $C>1$ and $|\mathrm{cod}(P/C)|<3$.
	 So, by Lemma \ref{lem: |codG|=2}, $P/C$ is an elementary abelian $p$-group.
	 As $N$ is a faithful irreducible $P/C$-module,
	 $G/C$ is a Frobenius group with complement $P/C\cong \mathsf{C}_{p}$.
	Write $|N|=q^d$.
    Hence, it is routine to check that $\mathrm{cod}(G/C)=\{ 1,p,q^{d} \}$.
	Recall that $|\mathrm{cod}(P)|=3$,
	and so $\mathrm{cod}(G/N)=\mathrm{cod}(P)=\{ 1,p,p^{k} \}$ for some $k>1$.
    Thus, $\mathrm{cod}(G)=\{ 1,p,p^{k},q^{d} \}$.
    Let $\lambda\in \mathrm{Irr}(N)^\sharp$, $\mu\in \mathrm{Irr}(C/C')^\sharp$ and $\chi \in \mathrm{Irr}(G|\lambda\times \mu)$.
	Note that $\mathrm{I}_{G}(\lambda\times \mu)=\mathrm{I}_{G}(\lambda)\cap \mathrm{I}_{G}(\mu)=NC$,
	and hence $\chi=(\lambda\times \mu)^G$. 
	In particular, $\chi(1)=p$.
	Also, 
	$$\ker(\chi)=\ker((\lambda\times \mu)^{G})=\bigcap_{g\in G} \ker(\lambda^g\times \mu^g)=\bigcap_{g\in G} (\ker(\lambda^g)\times \ker(\mu^g))=\ker(\lambda^{G})\times \ker(\mu^{G})=\ker(\mu^{G})$$
	where the third equality holds by part (1) of Lemma \ref{lem: direct product, kernel and codegrees}.
    So, 
	\[
	\mathrm{cod}(\chi)=\frac{|G:\ker(\chi)|}{\chi(1)}=\frac{|G:\ker(\mu^{G})|}{p}=q^{d}\cdot 
	|C:\ker(\mu^{G})|.
	\]
    Since $\ker(\mu^{G})\leq \ker(\mu)<C$, $\mathrm{cod}(\chi)\notin \mathrm{cod}(G)$, a contradiction.
    Therefore, $\mathrm{cod}(P/C)=\mathrm{cod}(P)$.
	
	Therefore, $\mathrm{cod}(G/C)=\mathrm{cod}(G)$.
	It follows by the minimality of $G$ that $C$ is minimal normal in $G$.
    In particular, $C\cong \mathsf{C}_{p}$ is central in $G$.
	Let $\lambda \in \mathrm{Irr}(N)^\sharp$, $\mu\in \mathrm{Irr}(C)^\sharp$ and $\chi \in \mathrm{Irr}(G|\lambda\times \mu)$.
	Then $\mathrm{I}_{G}(\lambda\times \mu)=\mathrm{I}_{G}(\lambda)=\mathrm{I}_{G}(\lambda\times 1_C)$, and $\ker(\chi)\cap N=\ker(\chi)\cap C=1$.
	Note that $\mathbf{F}(G)=NC$ and that $(|N|,|C|)=1$, and so $\ker(\chi)=1$.
	Since $G/C$ satisfies the hypothesis of Proposition \ref{prop: height=2, |cod(G/N)|=3, N unique minimal normal},
	an application of Proposition \ref{prop: height=2, |cod(G/N)|=3, N unique minimal normal} yields that
	either $\mathrm{I}_{G}(\lambda\times \mu)=NC$ or $|\mathrm{I}_{G}(\lambda\times \mu):NC|=2=p$.
    In particular, $\lambda\times \mu$ extends to $\mathrm{I}_{G}(\lambda\times \mu)$.
	Applying Gallagher's theorem \cite[Corollary 6.17]{isaacs1994}, we deduce that $\chi=\theta^G$ where $\theta$ is an extension of $\lambda\times \mu$ in $\mathrm{I}_{G}(\lambda\times \mu)$.
	Therefore,
	\[
	\mathrm{cod}(\chi)=\frac{|G|}{\chi(1)}=\frac{|G|}{|G:\mathrm{I}_{G}(\lambda\times \mu)|}=q^{d}p\cdot |\mathrm{I}_{G}(\lambda\times \mu):NC|.
	\]
   Observe again that $G/C$ satisfies the hypothesis of Proposition \ref{prop: height=2, |cod(G/N)|=3, N unique minimal normal},
   and so an application of Proposition \ref{prop: height=2, |cod(G/N)|=3, N unique minimal normal}
   yields the final contradiction that $\mathrm{cod}(\chi)\notin \mathrm{cod}(G)$.	
\end{proof}

\begin{lem}\label{lem: height=2, |cod(G/N)|=3, N is homogeneous}
   Assume Hypothesis \ref{hy: height=2, |cod(G/N)|=3} and that $|\mathrm{cod}(G)|=4$.
    Then $N$ is a faithful homogeneous $P$-module over $\mathbb{F}_q$.  
\end{lem}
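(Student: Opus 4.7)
The plan is to argue by induction on $|G|$ after fixing $\mathrm{cod}(G|N)$ via a divisibility count. Every $\chi\in\mathrm{Irr}(G|N)$ has some nontrivial irreducible constituent $\lambda$ of $\chi|_{N}$, so by Lemma~\ref{lem: basic facts on codegree}(2) the nontrivial $q$-power $\mathrm{cod}(\lambda)$ divides $\mathrm{cod}(\chi)$ and thus $q\mid\mathrm{cod}(\chi)$. Since $\mathrm{cod}(G/N)=\mathrm{cod}(P)=\{1,p,p^{k}\}$ is $q$-free, the four-element set $\mathrm{cod}(G)$ is the disjoint union $\mathrm{cod}(G/N)\sqcup\mathrm{cod}(G|N)$, and so $\mathrm{cod}(G|N)=\{M\}$ is a singleton.

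I then induct on $|G|$. In the base case $N$ is minimal normal in $G$: Proposition~\ref{prop: height=2, |cod(G/N)|=3, N unique minimal normal} enumerates all such $G$, and in every listed case $N$ is an elementary abelian $q$-group on which $P$ acts as a faithful irreducible $\mathbb{F}_q$-module, hence automatically homogeneous. For the inductive step, assume $N$ is not minimal normal and choose a minimal $G$-invariant subgroup $D$ of $N$ with $1<D<N$. Lemma~\ref{lem: height=2, |cod(G/N)|=3, forinduction} gives $|\mathrm{cod}(G/D)|=4$, Lemma~\ref{lem: height=2, |cod(G/N)|=3, C_P(N)=1} gives $\mathbf{C}_{PD/D}(N/D)=1$, and a brief argument confirms $N/D$ remains the nilpotent residual of $G/D$, so Hypothesis~\ref{hy: height=2, |cod(G/N)|=3} propagates to $G/D$ and induction applies: $N/D$ is a faithful homogeneous $P$-module over $\mathbb{F}_q$, and in particular elementary abelian, forcing $\Phi(N)\leq D$ for every minimal $G$-invariant $D\leq N$.

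If $N$ has two distinct minimal $G$-invariant subgroups, their intersection is trivial, giving $\Phi(N)=1$ and $N$ elementary abelian. The main obstacle is the remaining case where $N$ has a unique minimal $G$-invariant subgroup $D_0$ and $\Phi(N)=D_0>1$: here $N$ has nilpotency class at most $2$ with $\Phi(N)=N'\cdot N^{q}=D_0\leq\mathbf{Z}(N)$ an irreducible $P$-module, and I would rule this out by producing two distinct values in $\mathrm{cod}(G|N)$. On one hand, characters of $G$ factoring through $G/D_0$ lying over a nontrivial character of the homogeneous $P$-module $N/D_0$ contribute a codegree $q^{d}$ by Lemma~\ref{lem: homogeneous mod and cod} or Lemma~\ref{lem: frob, abel ker, general case, cod}, where $d=\dim_{\mathbb F_q}W$ for an irreducible summand $W$ of $N/D_0$; on the other hand, for any $\lambda\in\mathrm{Irr}(N)$ with $\lambda|_{D_0}\ne 1$, either $\lambda$ has order at least $q^{2}$ (if $N$ is abelian, since $\lambda^{q}=1$ would force $\lambda$ trivial on $N^{q}=D_0$) or $\lambda$ is a fully ramified nonlinear character of degree $\sqrt{|N:N'|}$ (if $N$ is nonabelian of class $2$, by Lemma~\ref{lem: semi-extra}), and in either case $\mathrm{cod}(\chi)>q^{d}$ for any $\chi\in\mathrm{Irr}(G|\lambda)$, contradicting $\mathrm{cod}(G|N)=\{M\}$. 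Once $N$ is known to be elementary abelian, homogeneity follows by the same singleton-codegree argument: a hypothetical splitting $N=U\oplus V$ into $P$-invariant parts from distinct homogeneous components, together with Lemmas~\ref{lem: homogeneous mod and cod} and \ref{lem: frob, abel ker, general case, cod}, produces distinct codegrees for characters lying over $\lambda_{U}\times 1_{V}$ and $\lambda_{U}\times\lambda_{V}$, again contradicting Step~1.
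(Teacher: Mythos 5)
Your opening reduction is fine: since $N$ is a $q$-group, every member of $\mathrm{cod}(G|N)$ is divisible by $q$, so $\mathrm{cod}(G|N)$ is a single value $M$ disjoint from $\mathrm{cod}(G/N)=\{1,p,p^k\}$; and your inductive set-up (pass to $G/D$ for minimal $G$-invariant $D\le N$, so $N/D$ is homogeneous and elementary abelian, hence $\Phi(N)\le D$) matches the paper's strategy. The genuine gap is in the case you yourself identify as the main obstacle, namely $\Phi(N)=D_0>1$ with $D_0$ the unique minimal $G$-invariant subgroup. First, your "one hand" claim that characters of $G/D_0$ lying over $\mathrm{Irr}(N/D_0)^\sharp$ have codegree exactly $q^d$ rests on Lemmas \ref{lem: homogeneous mod and cod} and \ref{lem: frob, abel ker, general case, cod}, which require $G/D_0$ to be a Frobenius group with cyclic or quaternion complement; but at this stage $G/D_0$ may instead be one of the half-transitive groups of part (2) of Proposition \ref{prop: height=2, |cod(G/N)|=3, N unique minimal normal} (e.g.\ $P\cong\mathsf{D}_8$ acting on $(\mathsf{C}_3)^2$), in which case the codegree is $2q^d$, not $q^d$. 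Consequently your "other hand" inequality $\mathrm{cod}(\chi)>q^d$ (even if it held) would not contradict $\mathrm{cod}(G|N)=\{M\}$, since $M$ may carry a $p$-part. Second, the inequality itself is not justified: in the abelian subcase you only get $q^2\mid\mathrm{cod}(\chi)$ from $o(\lambda)\ge q^2$, which proves nothing when $d\ge 2$; and in the class-$2$ subcase the assertion that every $\theta\in\mathrm{Irr}(N|D_0)$ is fully ramified of degree $\sqrt{|N:N'|}$ "by Lemma \ref{lem: semi-extra}" is circular — that lemma is an equivalence characterizing semi-extraspecial groups and gives no reason why $N$ should be one here.

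For comparison, the paper pins down $M_q=q^d$ (the chief-factor size in $N/E$) by applying part (1) of Lemma \ref{lem: large orbit} to $G/E$ — a tool that needs no Frobenius hypothesis — then, using that $E$ is the unique minimal normal subgroup, computes $\mathrm{cod}(\hat\theta^{\,G})$ exactly via Clifford correspondence (the kernel is trivial), getting $|N|=q^d\theta(1)$; the abelian case dies immediately, and in the class-$2$ case one deduces that $N$ is special with $\theta(1)=q^e$, $2e\le d$, after which the Frobenius possibility is excluded by comparing module dimensions and the half-transitive possibility by the counting argument $|P:\mathrm{I}_P(\lambda)|>|\mathrm{Irr}(E)|$. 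None of this case analysis is replaced by anything in your sketch. (A smaller, fixable slip: your final homogeneity argument for elementary abelian $N$ again cites the two Frobenius lemmas; the correct tool is part (1) of Lemma \ref{lem: large orbit}, which applies since $\Phi(G)=1$ there and yields at once that two non-isomorphic irreducible summands would force three distinct $q$-parts among the codegrees.)
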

\begin{proof}
	By Lemma \ref{lem: height=2, |cod(G/N)|=3, C_P(N)=1}, $N=\mathbf{F}(G)$ is a $q$-group.
	Let $G$ be a counterexample of minimal possible order.
	Since $1,p,p^{k}\in\mathrm{cod}(G/N)\subseteq\mathrm{cod}(G)$ with $k>1$, the assumption $\Phi(N)=1$ would force $N$ to be a homogeneous $P$-module over $\mathbb{F}_{q}$ by part (1) of Lemma~\ref{lem: large orbit}, contradicting our choice of $G$.  
	Hence $\Phi(N)>1$.
    Let $E$ be a minimal $G$-invariant subgroup of $N$.
	Since $|\mathrm{cod}(G/E)|=4$ by Lemma \ref{lem: height=2, |cod(G/N)|=3, forinduction},
	minimality of $G$ yields that $N/E$ is a homogeneous $P$-module over $\mathbb{F}_{q}$.  
	Consequently, $E=\Phi(N)$ is the unique minimal normal subgroup of $G$ and $E\le\mathbf{Z}(N)$. 
	Also, part (1) of Lemma \ref{lem: large orbit} yields that $\mathrm{cod}(G/E|N/E)=\{ n \}$ with $n_q$ the order of a $G$-chief factor in $N/E$. 
    Thus, $\mathrm{cod}(G)=\{ 1,p,p^{k},n \}$ where $k>1$.
    
	Let $\theta\in \mathrm{Irr}(N|E)$ and set $T=\mathrm{I}_{G}(\theta)$.
	Since $(|N|,|T/N|)=1$, $\theta$ extends to $\hat{\theta}\in \mathrm{Irr}(T)$.
	So, Clifford's correspondence forces $\chi:=\hat{\theta}^{G}\in \mathrm{Irr}(G)$.
	As $\ker(\chi)$ has trivial intersection with the unique minimal normal subgroup $E$ of $G$, $\ker(\chi)=1$.
	 Therefore,
\begin{equation}\label{chi}
		\mathrm{cod}(\chi)=\frac{|G|}{\chi(1)}=|T:N|\cdot \frac{|N|}{\theta(1)}=n.
\end{equation}
    In particular, $|N|=n_q\cdot \theta(1)$.
     If $N$ is abelian, then $\mathrm{cod}(\chi)_q=|N/E|\cdot |E|>n_q$, a contradiction. 
     So, $N$ is of nilpotency class 2.
	 Set $n_q=q^{d}$ and $|E|=q^e$.
	 As $N/E$ is a homogeneous $G$-module over $\mathbb{F}_q$, 
	 $|N:E|=q^{(a+1)d}$ for some nonnegative integer $a$, and so $\theta(1)=q^{ad+e}$.
     As $\theta(1)^{2}\leq |N:\mathbf{Z}(N)|\leq |N:E|=q^{(a+1)d}$,
     we obtain $2(ad+e)\leq (a+1)d$, whence $a=0$, $2e\leq d$, $\theta(1)=q^{e}$ and $N/E$ is a $G$-chief factor.
     Therefore, $N$ is a special $q$-group.
      Lemma \ref{lem: height=2, |cod(G/N)|=3, C_P(N)=1} also
     forces $N/E$ to be the unique minimal normal subgroup of $G/E$.
	 Hence, Proposition \ref{prop: height=2, |cod(G/N)|=3, N unique minimal normal} shows that either $G/E$ is one of the Frobenius groups listed in part (1) of Proposition \ref{prop: height=2, |cod(G/N)|=3, N unique minimal normal},
	 or $G/E$ is one of the groups listed in part (2) of Proposition \ref{prop: height=2, |cod(G/N)|=3, N unique minimal normal}.
     
	 Assume that $G/E$ is one of the Frobenius groups listed in part (1) of Proposition \ref{prop: height=2, |cod(G/N)|=3, N unique minimal normal}.
	 Then $\mathrm{cod}(\chi)=n=n_q$.
	 So, by (\ref{chi}),
	 $\mathrm{I}_{G}(\theta)=T=N$ for each $\theta\in \mathrm{Irr}(N|E)$.
	 Therefore, $G$ is a Frobenius group with complement $P\in \{ \mathsf{C}_{p^{2}},\mathsf{Q}_8 \}$ and kernel $N$.
	 Note that every faithful irreducible module of $P$ over $\mathbb{F}_q$ has the same dimension.
	 As a consequence, $q^{e}=|E|=|N/E|=q^{d}$ which contradicts $2e\leq d$.
   
    Assume that $G/E$ is one of the groups listed in part (2) of Proposition \ref{prop: height=2, |cod(G/N)|=3, N unique minimal normal}.
	Then $p=2$, $q\in \{ 3,5 \}$, $\mathrm{cod}(\chi)_2=n_2=2$ and $\mathrm{cod}(\chi)_q=|N/E|=q^{d}$. 
	We also claim that $\theta(1)^{2}=|N/E|$.
    In fact, by part (2) of Proposition \ref{prop: height=2, |cod(G/N)|=3, N unique minimal normal},
	$|N/E|\in \{ 3^{2}, 5^{2}, 3^{4} \}$;
    if $|N/E|\in \{ 3^{2},5^{2} \}$, as $\theta(1)=|E|=q^{e}$ where $2e\leq d=2$,
	we deduce that $\theta(1)^{2}=q^{2}=|N/E|$;
	if $|N/E|=3^{4}$, as $2e\leq d=4$, we have that either $e=1$ or $2$;
	if the former holds, note that $N$ is an extraspecial $3$-group of order $3^{5}$,
	and so $\theta(1)=3^{2}>3^e$, a contradiction;
	so, $e=2$ and $\theta(1)^{2}=3^{2e}=3^{4}=|N/E|$.
    Hence, there exists a $\lambda\in \mathrm{Irr}(E)^\sharp$
	such that $\theta=\frac{1}{\theta(1)}\lambda^{N}$ (see e.g. \cite[Problem 6.3]{isaacs1994}).
    In particular, $\mathrm{I}_{P}(\theta)=\mathrm{I}_{P}(\lambda)$.
    Observing that  
	$|\mathrm{I}_{P}(\lambda)|=|\mathrm{I}_{G}(\theta):N|=\mathrm{cod}(\chi)_2=2$ by (\ref{chi}),
    we conclude a contradiction that $|P:\mathrm{I}_{P}(\lambda)|>|\mathrm{Irr}(E)|$.
    In fact, if $G/E$ satisfies (2a) of Proposition \ref{prop: height=2, |cod(G/N)|=3, N unique minimal normal}, as in this case $|P|=8$ and $|E|=3$,
	we conclude a contradiction that $|P:\mathrm{I}_{P}(\lambda)|=4>|\mathrm{Irr}(E)|=3$;
	if $G/E$ satisfies (2b) of Proposition \ref{prop: height=2, |cod(G/N)|=3, N unique minimal normal}, as in this case $|P|=16$ and $|E|=5$,
	we conclude a contradiction that $|P:\mathrm{I}_{P}(\lambda)|=8>|\mathrm{Irr}(E)|=5$;
	if $G/E$ satisfies (2c) of Proposition \ref{prop: height=2, |cod(G/N)|=3, N unique minimal normal}, as in this case $|P|=32$ and $|E|=9$,
	we conclude a contradiction that $|P:\mathrm{I}_{P}(\lambda)|=16>|\mathrm{Irr}(E)|=9$.
\end{proof}

\begin{thm}\label{thm: height=2, |cod(G/N)|=3, classification}
	Assume Hypothesis \ref{hy: height=2, |cod(G/N)|=3}.
	Then $|\mathrm{cod}(G)|=4$ if and only if one of the following holds.
	\begin{description}
		\item[(1)] $G$ is a Frobenius group with complement $P$ and kernel $N$ such that one of the following holds.
		 \begin{description}
			\item[(1a)] $P\cong \mathsf{C}_{p^{2}}$, $N\cong (\mathsf{C}_{q})^{td}$ is a homogeneous $P$-module where
			$d$ is the multiplicative order of $q$ modulo $p^{2}$.
			\item[(1b)] $P\cong \mathsf{Q}_8$ and $N\cong (\mathsf{C}_{q})^{2}$.
		 \end{description}
		 \item[(2)] $N=\mathbf{F}(G)$, 
		 $\mathbf{C}_{P}(x)$ is a non-normal subgroup of order $2$ of $P$ for each $x\in N^\sharp$,
		 and one of the following holds.
		 \begin{description}
			\item[(2a)] $P\cong \mathsf{D}_{8}$ and $N\cong (\mathsf{C}_{3})^{2}$.
			\item[(2b)] $P\cong\mathsf{SmallGroup}(16,13)$ and $N\cong (\mathsf{C}_{5})^{2}$. 
			\item[(2c)] $P\cong \mathsf{ES}(2^{5}_{-})$ and $N\cong (\mathsf{C}_{3})^{4}$.
		 \end{description}
	\end{description}
\end{thm}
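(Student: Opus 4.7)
Plan of proof.

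The ``if'' direction is a direct case-by-case verification. In (1a), $P\cong\mathsf{C}_{p^{2}}$ is cyclic and the elementary abelian kernel $N$ has all $G$-chief factors isomorphic to the same $d$-dimensional $P$-module over $\mathbb{F}_{q}$, so Lemma \ref{lem: frob, abel ker, general case, cod} (with $\exp N=q$) gives $\mathrm{cod}(G|N)=\{q^{d}\}$ and hence $\mathrm{cod}(G)=\{1,p,p^{2},q^{d}\}$. In (1b), $N$ is a faithful absolutely irreducible $\mathsf{Q}_{8}$-module of dimension $2$ over $\mathbb{F}_{q}$ by Lemma \ref{lem: faith irred mod of Q8}, so each $\lambda\in\mathrm{Irr}(N)^{\sharp}$ induces to $\lambda^{G}\in\mathrm{Irr}(G)$ of degree $8$ with trivial kernel, yielding $\mathrm{cod}(G)=\{1,2,4,q^{2}\}$. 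Cases (2a)-(2c) are precisely the three sporadic configurations isolated in Proposition \ref{prop: height=2, |cod(G/N)|=3, N unique minimal normal}(2), whose codegree sets are already computed there.

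For the converse, assume $|\mathrm{cod}(G)|=4$ and induct on $|G|$. Lemmas \ref{lem: height=2, |cod(G/N)|=3, C_P(N)=1} and \ref{lem: height=2, |cod(G/N)|=3, N is homogeneous} already give that $N=\mathbf{F}(G)$ is an elementary abelian $q$-group and, as a $P$-module, is faithful and homogeneous; write $N\cong W^{t}$ with $W$ an irreducible $P$-submodule of dimension $d$ over $\mathbb{F}_{q}$. When $t=1$, $N$ is the unique minimal normal subgroup of $G$, so Proposition \ref{prop: height=2, |cod(G/N)|=3, N unique minimal normal} directly places $G$ in one of (1a) (with $t=1$), (1b), (2a), (2b), (2c).

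Suppose $t\geq 2$ and pick a minimal $P$-invariant subgroup $E\cong W$ of $N$. Then $N/E\cong W^{t-1}$ and $|\mathrm{cod}(G/E)|=4$ by Lemma \ref{lem: height=2, |cod(G/N)|=3, forinduction}, so by induction $G/E$ falls into one of the five listed cases. I rule out cases (1b) and (2a)-(2c) for $G/E$. If $G/E$ is in (1b), then $d=2$ and $t=2$; since $\mathsf{Q}_{8}$ continues to act Frobeniusly on $N\cong W^{2}$, $G$ is again Frobenius with abelian kernel, and Lemma \ref{lem: homogeneous mod and cod}(2) produces $\lambda\in\mathrm{Irr}(N)^{\sharp}$ with $\mathrm{cod}(\lambda^{G})>q^{2}$, while any $\lambda$ lying in an irreducible $P$-submodule of $\mathrm{Irr}(N)$ yields codegree $q^{2}$; combined with $\{1,2,4\}\subseteq\mathrm{cod}(G)$, this forces $|\mathrm{cod}(G)|\geq 5$. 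If $G/E$ is in any of (2a)-(2c), then minimality of $N/E$ in $G/E$ forces $t=2$, so $N\cong W^{2}$; the constant value $|\mathbf{C}_{P}(w)|=2$ achieved on $W^{\sharp}$ is no longer constant on $N^{\sharp}$, since for nonzero $u,v\in W$ with $\mathbf{C}_{P}(u)\ne\mathbf{C}_{P}(v)$ we have $\mathbf{C}_{P}((u,v))=\mathbf{C}_{P}(u)\cap\mathbf{C}_{P}(v)=1$, whereas $\mathbf{C}_{P}((u,0))$ retains order $2$. Applying Clifford's correspondence with linear extensions of $\lambda$ on inertia groups of different orders then yields two distinct values in $\mathrm{cod}(G|N)$, once more forcing $|\mathrm{cod}(G)|\geq 5$.

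Hence $G/E$ lies in (1a); that is, $P\cong\mathsf{C}_{p^{2}}$ acts Frobeniusly on $N/E$, and since $N\cong W^{t}$ is a homogeneous $P$-module, $P$ acts Frobeniusly on the whole of $N$ as well. Thus $G$ is a Frobenius group with complement $\mathsf{C}_{p^{2}}$ and elementary abelian homogeneous kernel of dimension $td$, while Lemma \ref{lem: size of chief factor in Frob ker}(1) identifies $d$ as the multiplicative order of $q$ modulo $p^{2}$, placing $G$ in case (1a) and closing the induction. The principal obstacle will be the codegree bookkeeping used to rule out (2a)-(2c) for $G/E$: because $G/E$ is not Frobenius in those cases, Lemma \ref{lem: homogeneous mod and cod} is unavailable, and one must instead exploit the explicit geometry of $W^{\sharp}$ as a $P$-set (in particular, the presence of several one-dimensional $P$-stable lines) together with a careful Clifford-theoretic comparison to confirm that the two different inertia group orders really produce two genuinely distinct entries of $\mathrm{cod}(G|N)$.
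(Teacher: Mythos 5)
Your converse argument takes a genuinely different route from the paper's. The paper does not induct on $|G|$ at this level: it fixes a minimal normal subgroup $E\le N$ and a $G$-invariant complement $F$, notes $EP\cong G/F$ satisfies the hypotheses of Proposition \ref{prop: height=2, |cod(G/N)|=3, N unique minimal normal}, and then in the $\mathsf{Q}_8$ case uses Lemma \ref{lem: homogeneous mod and cod}(2) exactly as you do, while in the sporadic cases it deduces from $\mathrm{cod}(\hat{\lambda}^{G})=|N:\ker(\hat\lambda^G)|\cdot|\mathrm{I}_{P}(\lambda)|=2|E|$ that $|\mathrm{I}_{P}(\lambda)|=2$ for \emph{every} $\lambda\in\mathrm{Irr}(N)^{\sharp}$, i.e.\ $P$ is $\tfrac12$-transitive on $\mathrm{Irr}(N)^{\sharp}$, and then invokes \cite[Theorem I]{isaacspassman1966} and \cite[Lemma 1]{zhang2000} to conclude that $N$ is already minimal normal; no elimination of cases for a quotient is needed. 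Your plan (induct, classify $G/E$, kill the non-(1a) possibilities when $t\ge 2$) is viable, avoids the half-transitivity theorem, and your treatment of the $\mathsf{Q}_8$ case coincides with the paper's; the price is the case analysis you yourself flag, plus the small point that ``$t=2$'' in the sporadic cases is not literally part of statement (2) and needs the (easy) observation that $P$ cannot act faithfully on a proper homogeneous piece of $(\mathsf{C}_3)^2$, $(\mathsf{C}_5)^2$, $(\mathsf{C}_3)^4$.

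Two genuine gaps remain. First, your ``if'' direction for (2a)--(2c) is circular: Proposition \ref{prop: height=2, |cod(G/N)|=3, N unique minimal normal}(2) \emph{assumes} $|\mathrm{cod}(G)|=4$ and derives the structure and codegree sets, so it cannot be quoted to show that a group presented only with that structure has exactly four codegrees; the paper settles this direction (and the passage between the inertia-group condition on $\mathrm{Irr}(N)^\sharp$ and the centralizer condition on $N^\sharp$) by a direct computation in $\mathsf{GAP}$ \cite{gap}, and you need some substitute. Second, the step you call the principal obstacle is indeed not carried out, and as sketched it conflates element centralizers $\mathbf{C}_{P}(u)$ with the character inertia groups $\mathrm{I}_{P}(\lambda)$ that Clifford theory requires; you must transfer the order-$2$, non-normal stabilizer statement to $\mathrm{Irr}(N)\cong\mathrm{Irr}(W)^{2}$ (coprime duality, or apply Proposition \ref{prop: height=2, |cod(G/N)|=3, N unique minimal normal} to $G/E$ to get the inertia-group version directly). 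Once that is done the bookkeeping is lighter than you fear: non-normality gives $\mu,\nu\in\mathrm{Irr}(W)^{\sharp}$ with distinct order-$2$ stabilizers, so $\lambda=\mu\times\nu$ has $\mathrm{I}_{G}(\lambda)=N$; Lemmas \ref{lem: height=2, |cod(G/N)|=3, forinduction} and \ref{lem: height=2, |cod(G/N)|=3, C_P(N)=1} (as in the paper's own kernel argument) force $\ker(\lambda^{G})\le N$, so $\mathrm{cod}(\lambda^{G})$ is a pure power of $q$, which already lies outside $\mathrm{cod}(G)=\mathrm{cod}(G/E)=\{1,2,2^{a},2q^{d}\}$ --- a single new value suffices, and you do not need to certify two distinct entries of $\mathrm{cod}(G|N)$.
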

\begin{proof}
	We assume first that $|\mathrm{cod}(G)|=4$.
	By Lemma \ref{lem: height=2, |cod(G/N)|=3, N is homogeneous},
	$N$ is a faithful homogeneous $P$-module over $\mathbb{F}_q$.
    Let $E$ be a minimal normal subgroup of $G$ in $N$ and $F$ a $G$-invariant complement of $E$ in $N$.
	Note that $EP\cong G/F$,
	and so, by Lemmas \ref{lem: height=2, |cod(G/N)|=3, forinduction} and \ref{lem: height=2, |cod(G/N)|=3, C_P(N)=1},
	 $EP$ satisfies the hypothesis of Proposition  \ref{prop: height=2, |cod(G/N)|=3, N unique minimal normal}.
	So,
	Proposition \ref{prop: height=2, |cod(G/N)|=3, N unique minimal normal}
    forces that either $EP$ is one of the Frobenius groups listed in part (1) of Proposition \ref{prop: height=2, |cod(G/N)|=3, N unique minimal normal} or $EP$ is one of the groups listed in part (2) of Proposition \ref{prop: height=2, |cod(G/N)|=3, N unique minimal normal}.
	If the former holds and $P\cong \mathsf{C}_{p^{2}}$, as $N$ is a faithful homogeneous $P$-module over $\mathbb{F}_q$, we conclude that (1a) holds.
	If the former holds and $P\cong \mathsf{Q}_8$, 
	then Proposition \ref{prop: height=2, |cod(G/N)|=3, N unique minimal normal} 
	yields that 
	$\mathrm{cod}(G)=\mathrm{cod}(EP)=\{ 1,2,4, q^{2} \}$,
	and part (2) of Lemma \ref{lem: homogeneous mod and cod} 
    then forces (1b).

	Assume now that the latter holds.
	Recall that $|\mathrm{cod}(G)|=4$,
	and so 
	$$\mathrm{cod}(G)=\mathrm{cod}(G/F)=\mathrm{cod}(EP)=\{ 1,2,2^{a},2|E| \}$$
	 where $a>1$ and $|E|\geq q^{2}$ by Proposition \ref{prop: height=2, |cod(G/N)|=3, N unique minimal normal}.
	Hence, it remains to show that $N$ is minimal normal in $G$.
    Let $\lambda\in V^\sharp$ where $V:=\mathrm{Irr}(N)$, and set $T=\mathrm{I}_{G}(\lambda)$.
	Since $(|T/N|,|N|)=1$,
	$\lambda$ extends to $\hat{\lambda}\in \mathrm{Irr}(T)$.
	So, Clifford's correspondence yields $\hat{\lambda}^{G}\in \mathrm{Irr}(G)$.
	Set $K=\ker(\hat{\lambda}^{G})$ and $D=K\cap N$.
	As $D<N$, $G/D$ satisfies Hypothesis \ref{hy: height=2, |cod(G/N)|=3} and, by Lemma \ref{lem: height=2, |cod(G/N)|=3, forinduction}, 
	$|\mathrm{cod}(G/D)|=4$.
    So, $N/D=\mathbf{F}(G/D)$ by Lemma \ref{lem: height=2, |cod(G/N)|=3, C_P(N)=1}.
	Since $K/D\cap N/D=1$, we have $K=D< N$.
    Therefore,
	\[
	 \mathrm{cod}(\hat{\lambda}^{G})=\frac{|G:K|}{\hat{\lambda}^{G}(1)}=\frac{|G:K|}{|G:T|}=|N:K|\cdot |T:N|,
	\]
    where $|N:K|=q^{k}$ for some $k>0$.
	So, $\mathrm{cod}(\hat{\lambda}^{G})=2|E|$,
	and therefore
	$|\mathrm{I}_{P}(\lambda)|=|T:N|=2$ for each $\lambda\in V^\sharp$.
    Equivalently, $P$ acts $\frac{1}{2}$-transitively on $V^\sharp$.
	By \cite[Theorem I]{isaacspassman1966}, $V$ is $P$-irreducible,
	so $N$ is minimal normal in $G$ by \cite[Lemma 1]{zhang2000}.
	Finally, one uses $\mathsf{GAP}$ \cite{gap} to verify 
	that $\mathbf{C}_{P}(x)$ is a non-normal subgroup of order $2$ of $P$ for each $x\in N^\sharp$.

	Conversely, we assume that either part (1) or part (2) holds.
	If part (1) holds,
    then $\{  1,p,p^{2} \} = \mathrm{cod}(G/N)\subseteq \mathrm{cod}(G)$,
    and so Lemma \ref{lem: frob, abel ker, general case, cod} implies that $|\mathrm{cod}(G)|=4$.	
	If part (2) holds,
    a direct computation via $\mathsf{GAP}$ \cite{gap} shows $|\mathrm{cod}(G)|=4$.
\end{proof}

\begin{cor}\label{cor: height=2, |cod(G/N)|=3}
		Assume Hypothesis \ref{hy: height=2, |cod(G/N)|=3} and that $|\mathrm{cod}(G)|=4$.
	If $p^{2}\mid n$ for some $n\in \mathrm{cod}(G)$, then $n=n_p$.
\end{cor}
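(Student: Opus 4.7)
The plan is to reduce the claim to the explicit classification furnished by Theorem~\ref{thm: height=2, |cod(G/N)|=3, classification} and then control the $p$-part of the codegrees in $\mathrm{cod}(G|N)$.

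First I would note that $G/N\cong P$ is a $p$-group with $|\mathrm{cod}(G/N)|=3$, so each element of $\mathrm{cod}(G/N)$ is a power of $p$. Since $P$ has a maximal subgroup arising as the kernel of a linear character, $p\in \mathrm{cod}(G/N)$, and therefore $\mathrm{cod}(G/N)=\{1,p,p^{k}\}$ for some integer $k\ge 2$. In particular, any element of $\mathrm{cod}(G/N)$ divisible by $p^{2}$ must equal $p^{k}$, which is already a pure $p$-power, so the conclusion $n=n_{p}$ is automatic on this part of $\mathrm{cod}(G)$. The corollary thus reduces to proving that no element of $\mathrm{cod}(G|N)$ is divisible by $p^{2}$; I will in fact establish the stronger assertion $n_{p}\le p$ for every $n\in \mathrm{cod}(G|N)$.

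Next I would invoke Theorem~\ref{thm: height=2, |cod(G/N)|=3, classification} and handle its two families separately. In family~(1), $G$ is a Frobenius group with abelian kernel $N$, so every $\chi\in \mathrm{Irr}(G|N)$ has the form $\lambda^{G}$ for some $\lambda\in \mathrm{Irr}(N)^{\sharp}$, with $\ker(\chi)=\bigcap_{g\in G}\ker(\lambda)^{g}\le N$; hence $\mathrm{cod}(\chi)=|N:\ker(\chi)|$ is a power of $q\ne p$, and is thus coprime to $p$. In family~(2), $p=2$, $N$ is minimal normal in $G$ with $|N|$ odd, and the inertia analysis already carried out in the proof of Theorem~\ref{thm: height=2, |cod(G/N)|=3, classification} gives $|T:N|=2$ for $T:=\mathrm{I}_{G}(\lambda)$ for every $\lambda\in \mathrm{Irr}(N)^{\sharp}$. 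Since $(|T/N|,|N|)=1$, the character $\lambda$ extends to some $\hat{\lambda}\in \mathrm{Irr}(T)$, and Clifford's correspondence gives $\chi:=\hat{\lambda}^{G}\in \mathrm{Irr}(G)$. Minimality of $N$ forces $\ker(\chi)\cap N=1$, and combined with $N=\mathbf{F}(G)$ and the solvable-group fact $\mathbf{C}_{G}(\mathbf{F}(G))\le \mathbf{F}(G)$, this yields $\ker(\chi)=1$ and hence $\mathrm{cod}(\chi)=|N|\cdot|T:N|=2|N|$. Because $|N|$ is odd, $\mathrm{cod}(\chi)_{p}=2=p$.

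Combining the two families, no element of $\mathrm{cod}(G|N)$ is divisible by $p^{2}$, so if $p^{2}\mid n$ for some $n\in \mathrm{cod}(G)$ then $n\in \mathrm{cod}(G/N)$, whence $n=p^{k}=n_{p}$ as required. The main obstacle is really just bookkeeping once Theorem~\ref{thm: height=2, |cod(G/N)|=3, classification} is in hand; perhaps the slightly delicate point is the passage from $\ker(\chi)\cap N=1$ to $\ker(\chi)=1$ in family~(2), for which I would use $[\ker(\chi),N]\le \ker(\chi)\cap N=1$ together with the self-centralizing property of $\mathbf{F}(G)$.
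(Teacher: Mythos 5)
Your proposal is correct and has the same overall skeleton as the paper's proof: both reduce the statement to the classification in Theorem~\ref{thm: height=2, |cod(G/N)|=3, classification} and then control the $p$-part of the codegrees in the two families; in family~(1) your observation that $\mathrm{cod}(G|N)$ consists of $q$-powers is exactly the (implicit) reason the paper can say $n\in\mathrm{cod}(G/N)=\mathrm{cod}(P)$. Where you genuinely diverge is family~(2): the paper disposes of the three explicit groups by a routine \textsf{GAP} check (equivalently, by reading off $\mathrm{cod}(G)\in\bigl\{\{1,2,4,18\},\{1,2,8,50\},\{1,2,8,162\}\bigr\}$ from Proposition~\ref{prop: height=2, |cod(G/N)|=3, N unique minimal normal}), whereas you rederive the bound on the $2$-part by Clifford theory, using $|\mathrm{I}_{P}(\lambda)|=2$ and the minimality of $N$ established inside the proof of the theorem. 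Your route buys a computation-free argument, at the cost of leaning on facts recorded in that proof rather than in the statement of the classification. One loose end to tidy: as written you compute $\mathrm{cod}(\hat\lambda^{G})=2|N|$ only for one chosen extension $\hat\lambda$ of each $\lambda$, while your final sentence needs \emph{every} member of $\mathrm{cod}(G|N)$ to have $2$-part at most $2$. This is easily repaired: any $\chi\in\mathrm{Irr}(G|N)$ lies over some $\lambda\in\mathrm{Irr}(N)^{\sharp}$ and, by Clifford's correspondence together with Gallagher's theorem applied to $T/N\cong\mathsf{C}_{2}$, equals $\psi^{G}$ for a \emph{linear} $\psi\in\mathrm{Irr}(T|\lambda)$, so the identical kernel and degree computation gives $\mathrm{cod}(\chi)=|T|=2|N|$; alternatively, since $|\mathrm{cod}(G)|=4$ and $|\mathrm{cod}(G/N)|=3$, the single new value $2|N|$ you produced already forces $\mathrm{cod}(G)=\{1,2,2^{k},2|N|\}$, from which the conclusion $n=n_{p}$ is immediate.
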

\begin{proof}
		 Let $n\in \mathrm{cod}(G)$ satisfy $p^{2}\mid n$.
	By Theorem \ref{thm: height=2, |cod(G/N)|=3, classification},
	either $G$ is one of the Frobenius groups listed in part (1) of Theorem \ref{thm: height=2, |cod(G/N)|=3, classification}, or $G$ is one of the groups listed in part (2) of Theorem \ref{thm: height=2, |cod(G/N)|=3, classification}.
	If the former holds, then $n\in \mathrm{cod}(G/N)=\mathrm{cod}(P)$.
	Consequently, $n=n_p$.
    If the latter holds, then a routine check by $\mathsf{GAP}$ \cite{gap} yields that part (2) holds.
\end{proof}

\begin{hy}\label{hy: height=2, |cod(G/N)|=2}
	Let $G=N \rtimes P$ be a solvable group with Fitting height $2$ where $N$ is the nilpotent residual of $G$ and $P\in \mathrm{Syl}_{p}(G)$.
	Set $C=\mathbf{C}_{P}(N)$.
	Assume that $|\mathrm{cod}(G/N)|=2$ and that $N>1$.	
 \end{hy}
 
Assume Hypothesis \ref{hy: height=2, |cod(G/N)|=2} and that $|\mathrm{cod}(G)|\leq 4$.
Then, by Lemma \ref{lem: |codG|=2}, $P$ is an elementary abelian $p$-group such that $P=P_0\times C$.
Also, part (2) of Lemma \ref{lem: |cod(G)|<=4} implies that $N$ is a $q$-group contained in $\mathbf{F}(G)$.

Let $G=V \rtimes H$ be a finite group where $V$ is a completely reducible $H$-module (possibly of mixed characteristic).
Recall that 
$\mathcal{S}_H(V)$ denotes the set of representatives of the isomorphism classes of irreducible $H$-submodules in $V$.
Therefore, 
$$V=\bigtimes_{W\in \mathcal{S}_H(V)} W(V),$$
where $W(V)$ denotes the $W$-homogeneous part of $V$.

\begin{lem}\label{lem: P/C acts fpf on N/N'}
	Assume Hypothesis \ref{hy: height=2, |cod(G/N)|=2}.
	If $|\mathrm{cod}(G)|\leq 4$,
	then $P/C$ is a cyclic group of order $p$ acting Frobeniusly on $N/N'$.
\end{lem}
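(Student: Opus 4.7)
The plan is to prove by an explicit codegree count that if either $|P/C|>p$ or $P/C$ fails to act Frobeniusly on $V:=N/\Phi(N)$, then $|\mathrm{cod}(G)|\ge 5$, contradicting the standing hypothesis.

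First I reduce to the case $\Phi(N)=1$. By Lemma \ref{lem: Frobeniusly action}, the desired Frobenius action on $N/N'$ is equivalent to the analogous action of $P_0:=P/C$ on $V$, and since $P_0$ acts faithfully on $N$ iff faithfully on $N/\Phi(N)$ in this coprime setting, the nilpotent residual of $G/\Phi(N)$ is $N/\Phi(N)$ and Hypothesis \ref{hy: height=2, |cod(G/N)|=2} descends. Combined with $\mathrm{cod}(G/\Phi(N))\subseteq\mathrm{cod}(G)$, I may assume $N=V$ is an elementary abelian $q$-group. A routine Maschke-based argument then shows $\Phi(G)=1$: every maximal subgroup $M$ of $G$ not containing $V$ meets $V$ in a maximal $G$-submodule, and the Jacobson radical of the semisimple $\mathbb F_q[P]$-module $V$ is zero; together with $\Phi(G)V/V\le\Phi(P)=1$ this forces $\Phi(G)=1$. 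Hence $\mathbf{F}(G)=V\times C$ and Gasch\"utz yields $G=(V\times C)\rtimes P_0$, the setting of Lemma \ref{lem: large orbit}(2).

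Decompose $V=\bigoplus_i U_i^{m_i}$ into isotypic components of pairwise non-isomorphic simple $\mathbb F_q[P_0]$-modules $U_i$, with characters $\phi_i:P_0\to\mathbb F_{q^{d_i}}^\times$. Because $P_0$ is elementary abelian and $\mathbb F_{q^{d_i}}^\times$ is cyclic, $|\phi_i(P_0)|\in\{1,p\}$, and $d_i=d$ (the multiplicative order of $q$ modulo $p$) whenever $\phi_i\ne 1$. If $C>1$, a trivial $\mathbb F_p[P_0]$-simple class also appears in $\mathbf{F}(G)$. Lemma \ref{lem: large orbit}(2) now guarantees $\prod_{W\in\mathcal A}|W|\in\mathrm{cod}(G)$ for every $\mathcal A\subseteq\mathcal S_{P_0}(\mathbf{F}(G))$. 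Now I argue by cases. If $|P_0|\ge p^2$, then $P_0$ cannot embed in any cyclic $\mathbb F_{q^d}^\times$, so faithfulness forces two non-isomorphic non-trivial simples $U_1,U_2$ with $\ker\phi_1\cap\ker\phi_2=1$. Three characters then produce extra codegrees: for $\lambda$ supported in a single copy of $U_1^*$, the canonical extension of $\lambda$ to $\mathrm{I}_G(\lambda)=V\ker(\phi_1)C$ induces with codegree $q^d$, while twisting by a non-trivial linear character of $\ker\phi_1\cong\mathsf C_p$ gives codegree $q^dp$; and a $\lambda$ with nonzero components in both $U_1^*$ and $U_2^*$ collapses the stabilizer to $VC$, producing codegree $q^{2d}$. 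With $\{1,p\}\subseteq\mathrm{cod}(G/N)$ these are five pairwise distinct codegrees (using $p\ne q$), a contradiction. If instead $|P_0|=p$ and $\mathbf{C}_V(P_0)\ne 0$, then $V$ contains both the trivial simple $U_0$ of order $q$ and, by faithfulness, a non-trivial simple $U_1$ of order $q^d$; characters supported only on $U_0$ are $P_0$-invariant and their extensions yield codegrees $q$ and $qp$, those supported only on $U_1$ give codegree $q^d$, and those nonzero on both $U_0^*$ and $U_1^*$ give codegree $q^{d+1}$ since $\mathbb F_q[P_0]\lambda$ then has dimension $1+d$. Again five distinct codegrees, contradiction.

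The only remaining possibility is $|P_0|=p$ and $\mathbf{C}_V(P_0)=0$, meaning $P_0\cong\mathsf C_p$ acts fixed-point-freely on $V=N/\Phi(N)$; Lemma \ref{lem: Frobeniusly action} lifts this to a Frobenius action on $N/N'$, as required. The main technical obstacle is the codegree bookkeeping in the two contradiction cases: for each constructed $\chi=(\hat\lambda\mu)^G$ one must identify $\ker\chi\cap V=\bigcap_g\ker\lambda^g$ with the annihilator in $V$ of the cyclic $\mathbb F_q[P_0]$-submodule spanned by $\lambda$, compute $\chi$ on $P$ via the induction formula (which equals $p\mu(x)$ on $\mathrm{I}_G(\lambda)\cap P$ and vanishes outside), and then confirm that the resulting values of $|G:\ker\chi|/[G:\mathrm{I}_G(\lambda)]$ are pairwise distinct and disjoint from $\{1,p\}$.
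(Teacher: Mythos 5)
Your proposal is correct in substance, and it takes a genuinely different route from the paper's. The paper works with a minimal counterexample: after the same coprime reduction (via Lemma \ref{lem: Frobeniusly action} and $\mathbf{C}_P(N)=\mathbf{C}_P(N/\Phi(N))$) to the case $N$ elementary abelian, it disposes of the possibility $|\mathrm{cod}(G)|=3$ by citing the classification of nonnilpotent groups with three codegrees, uses Lemma \ref{lem: large orbit}(2) to force exactly two isotypic classes of equal size when $N$ is not homogeneous, and then obtains its contradiction from Lemma \ref{lem: qww} (no member of $\mathrm{cod}(G|G')$ is divisible by $p$, so $P$ would act Frobeniusly on $G'=N$, making $P\cong\mathsf{C}_p$ and the conclusion true). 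You instead construct irreducible characters explicitly: when $|P/C|\ge p^{2}$ you induce the canonical extension from the stabilizer $V\ker(\phi_1)C$, twist by a linear character nontrivial on $\ker\phi_1$, and mix two non-isomorphic nontrivial constituents, producing the five pairwise distinct codegrees $1,p,q^{d},pq^{d},q^{2d}$; in your second case you get $1,p,q,pq,q^{d},q^{d+1}$ (at least five values even when $d=1$). Either way this contradicts $|\mathrm{cod}(G)|\le 4$ directly, so your argument needs neither the three-codegree classification, nor a minimal counterexample, nor Lemma \ref{lem: qww}; the price is the kernel/codegree bookkeeping, which you carry out essentially correctly and which the paper's citations absorb.

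A few repairs to the write-up, none fatal. (i) For $|P/C|\ge p^{3}$ faithfulness does not give $\ker\phi_1\cap\ker\phi_2=1$, only two distinct index-$p$ kernels; the stabilizer of your mixed $\lambda$ is then $V(\ker\phi_1\cap\ker\phi_2)C$ rather than $VC$. The codegree is still $q^{2d}$, since the degree and the kernel index both acquire the same factor $|P_0:\ker\phi_1\cap\ker\phi_2|$, but the justification as stated is wrong; similarly $\ker\phi_1\cong\mathsf{C}_p$ only when $|P_0|=p^{2}$, though any linear character nontrivial on $\ker\phi_1$ and trivial on $C$ does the job and yields $pq^{d}$. (ii) The $\Phi(G)=1$/Gasch\"utz detour is unnecessary: by Lemma \ref{lem: |codG|=2} the group $P$ is elementary abelian, so $P=P_0\times C$ and $G=(V\times C)\rtimes P_0$ immediately, and Maschke gives the complete reducibility you use. (iii) Your second case is in fact vacuous: since $N$ is the nilpotent residual of $G$, $G/[N,P]$ is nilpotent, so $N=[N,P]$ and, after the reduction, $\mathbf{C}_{V}(P_0)=\mathbf{C}_V(P)=1$ automatically. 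Your codegree contradiction inside that case is still a valid way to eliminate it, but the case could simply be deleted, after which the whole proof rests on the $|P/C|\ge p^{2}$ computation plus the observation that a faithful fixed-point-free action of $P_0\cong\mathsf{C}_p$ on $V$ lifts to $N/N'$ by Lemma \ref{lem: Frobeniusly action}.
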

\begin{proof}
	Let $G$ be a counterexample of minimal possible order.
	Note that $N$ is a $q$-group with $q\neq p$,
	and hence
	$C=\mathbf{C}_{P}(N)=\mathbf{C}_{P}(N/N')=\mathbf{C}_{P}(N/\Phi(N))$.
	As $P/C$ acts coprimely on the $q$-group $N$, by Lemma \ref{lem: Frobeniusly action},  
    $P/C$ acts Frobeniusly on $N/N'$ if and only if $P/C$ acts Frobeniusly on $N/\Phi(N)$.
	By the minimality of $G$, we deduce that $C=N'=\Phi(N)=1$.
	In particular, $N=\mathbf{F}(G)$ is an elementary abelian $q$-group.
	Since $h(G)\geq 2$, Lemma \ref{lem: |codG|=2} forces $|\mathrm{cod}(G)|\geq 3$.
	If $|\mathrm{cod}(G)|= 3$, then we conclude a contradiction by \cite[Theorem 3.4]{alizadeh2019}
	and \cite[Theorem 0.1]{alizadeh2022}.
	Thus, $|\mathrm{cod}(G)|=4$.
    If $N$ is a homogeneous $P$-module over $\mathbb{F}_q$,
	then $\mathbf{C}_{P}(V)=\mathbf{C}_{P}(N)=1$ for each minimal normal subgroup $V$ of $G$ in $N$,
	so $P\cong \mathsf{C}_{p}$ acts Frobeniusly on $N$, a contradiction. 
    Hence, $N$ is not a homogeneous $P$-module over $\mathbb{F}_q$.
	Applying part (2) of Lemma \ref{lem: large orbit} to $G$, we deduce that 
    $N=V(N)\times W(N)$ where $V$ and $W$ are non-isomorphic $P$-submodules of $N$
	such that $|V|=|W|$, and $\{ q^{d},q^{2d}\}\subseteq \mathrm{cod}(G)$ where $q^{d}:=|V|$.
	Since $G'=N$ and $\mathrm{cod}(G/N)=\{1,p\}$, we have $\mathrm{cod}(G|G')=\{q^{d},q^{2d}\}$, contradicting Lemma~\ref{lem: qww}.
\end{proof}

\begin{prop}\label{prop: height=2, |cod(G/N)|=2, C>1}
   Assume Hypothesis \ref{hy: height=2, |cod(G/N)|=2} and that $C>1$. 
   Then $|\mathrm{cod}(G)|=4$ if and only if 
    $G=H\times C$ 
	where $H$ is a Frobenius group with complement $P_0\cong \mathsf{C}_{p}$ and kernel $N$
	such that $N$ is a homogeneous $P_0$-module over $\mathbb{F}_q$,
	and $C$ is an elementary abelian $p$-group.
	Also, if $|\mathrm{cod}(G)|=4$, then $\mathrm{cod}(G)=\{ 1,p,q^{d},pq^{d} \}$ where 
	$d$ is the multiplicative order of $q$ modulo $p$.
\end{prop}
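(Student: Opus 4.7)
The biconditional is proved in two parts, with the nontrivial work in the forward direction.

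For sufficiency, apply Lemma~\ref{lem: frob, abel ker, general case, cod} to $H=N\rtimes P_0$: since $N$ is elementary abelian and a homogeneous $P_0$-module with chief-factor dimension $d$, we obtain $\mathrm{cod}(H)=\{1,p,q^d\}$. For any $\alpha\in\mathrm{Irr}(H|N)$, the quotient $H/\ker(\alpha)$ is again a Frobenius group with abelian kernel, so it has trivial center; that is, $\Z(\alpha)/\ker(\alpha)=1$. Since $|\Z(\beta)/\ker(\beta)|$ is a $p$-power for every $\beta\in\mathrm{Irr}(C)$, the coprimality hypothesis in Lemma~\ref{lem: direct product, kernel and codegrees}(2) is automatically satisfied, yielding $\mathrm{cod}(\alpha\times\beta)=\mathrm{cod}(\alpha)\mathrm{cod}(\beta)$. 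A routine case check on the four types of pairs $(\alpha,\beta)\in\mathrm{Irr}(H)\times\mathrm{Irr}(C)$ then gives $\mathrm{cod}(G)=\{1,p,q^d,pq^d\}$.

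For necessity, assume $|\mathrm{cod}(G)|=4$. The assumption $|\mathrm{cod}(G/N)|=2$ together with Lemma~\ref{lem: |codG|=2} shows $P\cong G/N$ is elementary abelian, and Lemma~\ref{lem: P/C acts fpf on N/N'} forces $|P/C|=p$. Choosing a complement $P_0\cong\mathsf{C}_p$ of $C$ in $P$ yields $G=(N\rtimes P_0)\times C=H\times C$ with $H:=N\rtimes P_0$. Since $P_0$ acts Frobeniusly on $N/N'$ (Lemma~\ref{lem: P/C acts fpf on N/N'}) and coprimely on the nilpotent group $N$, the decomposition-and-induction argument inside the proof of Lemma~\ref{lem: Frobeniusly action} actually promotes the Frobenius action to all of $N$, so $H$ is a Frobenius group. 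In particular, Brauer's permutation lemma applied to the coprime action of $P_0$ on the conjugacy classes of $N$ (combined with the fact that nontrivial $N$-classes have $p'$-order and are therefore not $P_0$-fixed) gives $\mathrm{I}_{P_0}(\theta)=1$ for every $\theta\in\mathrm{Irr}(N)^\sharp$, whence $\theta^H\in\mathrm{Irr}(H)$.

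The constraint $|\mathrm{cod}(G|N)|=2$ is the key. Fix a nontrivial $\beta\in\mathrm{Irr}(C)$. For any $\theta\in\mathrm{Irr}(N)^\sharp$, both $(\theta^H)\times 1_C$ and $(\theta^H)\times\beta$ lie in $\mathrm{Irr}(G|N)$, and the trivial-center argument of the sufficiency direction yields $\mathrm{cod}((\theta^H)\times\beta)=p\cdot\mathrm{cod}((\theta^H)\times 1_C)$. Hence the two members of $\mathrm{cod}(G|N)$ are $q^d$ and $pq^d$ for a single $q$-power $q^d$, and $\mathrm{cod}(\theta^H)=q^d$ for every $\theta\in\mathrm{Irr}(N)^\sharp$. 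Specialising to a linear $\theta\in\mathrm{Irr}(N/N')^\sharp$, the cyclic $P_0$-submodule $\langle\theta^h\rangle$ of $\mathrm{Irr}(N/N')$ has uniform cardinality $q^d$; if some $\theta$ had order $q^k$ with $k\geq 2$, then $\theta^{q^{k-1}}$ of order $q$ would generate a strictly smaller $P_0$-submodule of the same size, a contradiction. Thus $N/N'$ is elementary abelian, and the uniformity forces every nontrivial cyclic $P_0$-submodule of $\mathrm{Irr}(N/N')$ to be irreducible and mutually isomorphic; so $\mathrm{Irr}(N/N')$ (and dually $N/N'$) is a homogeneous $P_0$-module, and by Lemma~\ref{lem: size of chief factor in Frob ker} the dimension $d$ of its irreducible summands equals the multiplicative order of $q$ modulo $p$.

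The main obstacle is ruling out $N'>1$. Assuming the contrary and picking a nonlinear $\theta\in\mathrm{Irr}(N|N')$, the identity $\mathrm{cod}(\theta^H)=q^d$ becomes $|N:\bigcap_h\ker(\theta)^h|=q^d\theta(1)$, which, combined with $\theta(1)^2\leq|N:\Z(\theta)|$, the now-established elementary abelian homogeneous structure of $N/N'$, and the $P_0$-equivariance of the commutator map $N/N'\times N/N'\to N'$, produces an irreducible character of $G$ with codegree outside $\{1,p,q^d,pq^d\}$, a contradiction. Hence $N'=1$, completing the necessary structure and, together with the codegree count already carried out, also verifying $\mathrm{cod}(G)=\{1,p,q^d,pq^d\}$.
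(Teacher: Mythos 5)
Your sufficiency direction is fine and is essentially the paper's argument (the paper quotes \cite[Theorem A]{qian2023} where you use Lemma~\ref{lem: frob, abel ker, general case, cod}; either works). The necessity direction, however, has a genuine gap at its foundation. You assert that because $P_0$ acts Frobeniusly on $N/N'$, the induction inside Lemma~\ref{lem: Frobeniusly action} ``promotes the Frobenius action to all of $N$, so $H$ is a Frobenius group.'' Lemma~\ref{lem: Frobeniusly action} does no such thing: it only shows that the action on $N/N'$ is Frobenius if and only if the action on $N/\Phi(N)$ is, and a fixed-point-free action on $N/N'$ does not lift to $N$ when $N$ is nonabelian. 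The semi-extraspecial groups of Proposition~\ref{prop: height=2, |cod(G/N)|=2, C=1, not Frob} (case (3) of Theorem~\ref{thmA}) are exactly counterexamples: there $P\cong\mathsf{C}_p$ acts Frobeniusly on $N/N'$, yet $N'=\mathbf{Z}(G)$ is centralized and the group is not Frobenius. Whether $H$ is Frobenius is precisely what must be extracted from $|\mathrm{cod}(G)|=4$ together with $C>1$; it is not free. Everything downstream in your argument --- irreducibility of $\theta^{H}$ for every nontrivial $\theta$, the claim that $\mathrm{cod}(G|N)$ consists of one $q$-power and $p$ times that $q$-power, the uniform chief-factor size $q^d$ --- rests on this unproved claim. (Your appeal to Brauer's permutation lemma, ``nontrivial $N$-classes have $p'$-order and are therefore not $P_0$-fixed,'' is also a non sequitur as phrased, though that is secondary.)

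Second, the step you yourself call the main obstacle, ruling out $N'>1$, is only asserted: you say that $\theta(1)^{2}\leq|N:\mathbf{Z}(\theta)|$, the homogeneous structure of $N/N'$ and $P_0$-equivariance of the commutator map ``produce an irreducible character of $G$ with codegree outside $\{1,p,q^{d},pq^{d}\}$,'' without exhibiting the character or the computation; this is the crux of the proposition, since it is what separates the $C>1$ case from cases (2e) and (3) of Theorem~\ref{thmA}. For comparison, the paper disposes of both issues at once: for every nonlinear $\alpha\in\mathrm{Irr}(H)$ it shows $q\mid\mathrm{cod}(\alpha)$ and that $\mathbf{Z}(\alpha)/\ker(\alpha)$ is a $q$-group (via a direct-product contradiction using part (3) of Lemma~\ref{lem: direct product, kernel and codegrees}), so that multiplying by any $\gamma\in\mathrm{Irr}(C)^{\sharp}$ places $p\cdot\mathrm{cod}(\alpha)$ in $\mathrm{cod}(G)$; the bound $|\mathrm{cod}(G)|=4$ then forces $\mathrm{cod}(\alpha)$ to be constant over all nonlinear $\alpha$, and \cite[Theorem A]{qian2023} yields in one stroke that $H$ is Frobenius with elementary abelian homogeneous kernel. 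If you wish to avoid that citation, you must first derive $\mathbf{C}_{N}(P_0)=1$ from the codegree data (not from Lemma~\ref{lem: Frobeniusly action}); only then does your counting with chief factors of order $q^{d}$ have a chance of eliminating $N'>1$, and as written neither step is carried out.
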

\begin{proof}
    We assume first that $|\mathrm{cod}(G)|=4$.
	As $G=N \rtimes P$ where $P=P_0\times C$ is elementary abelian,
	we have $G=H\times C$ where $H:=NP_0$ and $C$ is an elementary abelian $p$-group.
    Note that $H/N'$ is a Frobenius group with complement $P_0N'/N'\cong \mathsf{C}_{p}$ by Lemma \ref{lem: P/C acts fpf on N/N'},
	and hence $P_0\cong P_0N'/N'\cong \mathsf{C}_{p}$.
	Since $N$ is the nilpotent residual of $G$,
	it is also the nilpotent residual of $H$.
	Therefore, $N=H'$.

	Let $\alpha\in\mathrm{Irr}(H)$ be nonlinear.
	If $\mathrm{cod}(\alpha)=p$,
	as $|H/\ker(\alpha)|=p\cdot |N\ker(\alpha)/\ker(\alpha)|$,
	we have that $\alpha(1)=|N\ker(\alpha)/\ker(\alpha)|$ 
	is the order of a normal Sylow $q$-subgroup of $H/\ker(\alpha)$,
	hence $\alpha(1)=1$, a contradiction. 
	Thus, $q\mid \mathrm{cod}(\alpha)$.
	Now, we claim that $\mathbf{Z}(\alpha)/\ker(\alpha)$ is a $q$-group.
	In fact, otherwise $P_0\ker(\alpha)/\ker(\alpha)$ is a nontrivial Sylow $p$-subgroup of the cyclic group $\mathbf{Z}(\alpha)/\ker(\alpha)$,
	and so $\mathsf{C}_{p}\cong P_0\ker(\alpha)/\ker(\alpha)\unlhd H/\ker(\alpha)$;
    setting $\overline{H}=H/\ker(\alpha)$,
    we have $\overline{H}=\overline{N}\times \overline{P_0}$ where $\mathsf{C}_{p}\cong\overline{P_0}\in \mathrm{Syl}_{p}(\overline{H})$;
	as $\alpha(1)>1$,
	$\overline{N}$ is nonabelian; 
    since $|\mathrm{cod}(\overline{P_0})|=2$ and, 
	by Lemma \ref{lem: |codG|=2}, $|\mathrm{cod}(\overline{N})|>2$,
	we have $|\mathrm{cod}(G)|\geq |\mathrm{cod}(\overline{H})|=|\mathrm{cod}(\overline{N})|\cdot |\mathrm{cod}(\overline{P_0})|>4$ by part (3) of Lemma \ref{lem: direct product, kernel and codegrees}, a contradiction.
	Let $\gamma\in \mathrm{Irr}(C)^\sharp$.
	Then $\mathbf{Z}(\gamma)=C$ is an elementary abelian $p$-group and $\mathrm{cod}(\gamma)=p$.
	As $(|\mathbf{Z}(\alpha)/\ker(\alpha)|,|\mathbf{Z}(\gamma)|)=1$,
	 part (2) of Lemma \ref{lem: direct product, kernel and codegrees} yields $p \cdot \mathrm{cod}(\alpha)=\mathrm{cod}(\alpha)\mathrm{cod}(\gamma)\in \mathrm{cod}(G)$.
	Since $q\mid \mathrm{cod}(\alpha)$,
	we have $\mathrm{cod}(G)=\{ 1,p,\mathrm{cod}(\alpha),p\cdot \mathrm{cod}(\alpha) \}$.
	In particular, $\mathrm{cod}(\alpha)$ is a constant for each nonlinear character $\alpha \in \mathrm{Irr}(H)$.
	As $H$ is not nilpotent, we conclude by \cite[Theorem A]{qian2023} that 
    $H$ is a Frobenius group with complement $P_0\cong \mathsf{C}_{p}$ and elementary abelian kernel $N$
	such that $N$ is a homogeneous $P_0$-module over $\mathbb{F}_q$.
	So, $\mathrm{cod}(G)=\{ 1,p,q^{d}, pq^{d} \}$ where $q^d$ is the order of a $G$-chief factor in $N$ and,
	by part (1) of Lemma \ref{lem: size of chief factor in Frob ker}, 
    $d$ is the multiplicative order of $q$ modulo $p$.
	
	Conversely, we assume that $G=H\times C$ where $H$ is a Frobenius group with complement $P_0\cong \mathsf{C}_{p}$ and kernel $N$
	such that $N=H'$ is a homogeneous $P_0$-module over $\mathbb{F}_q$,
	and $C$ is an elementary abelian $p$-group.
	By \cite[Theorem A]{qian2023}, $\mathrm{cod}(H|N)=\{ q^{d} \}$ with $q^{d}$ the order of an $H$-chief factor in $N$.
    Let $\chi\in \mathrm{Irr}(G)^\sharp$.
    If $\chi(1)=1$, as $G/\ker(\chi)$ is an elementary abelian $p$-group,
	we conclude that $\mathrm{cod}(\chi)=p$.
    Assume that $\chi(1)>1$.
	Then $\chi=\alpha\times \beta$ where $\alpha\in \mathrm{Irr}(H|N)$ and $\beta\in \mathrm{Irr}(C)$.
    Note that $\mathbf{Z}(\alpha)$ is a $q$-group and that $\mathbf{Z}(\beta)$ is a $p$-group,
	and hence part (2) of Lemma \ref{lem: direct product, kernel and codegrees} implies that $\mathrm{cod}(\alpha)\mathrm{cod}(\beta)\in \mathrm{cod}(G)$.
    Therefore, $\mathrm{cod}(G)=\{ 1,p,q^{d},pq^{d} \}$.
\end{proof}

\begin{lem}\label{lem: height=2, |cod(G/N)|=2, C=1, Frob}
	Assume Hypothesis \ref{hy: height=2, |cod(G/N)|=2} and that $C=1$.
	If $|\mathrm{cod}(G)|=4$, then 
	$P\cong \mathsf{C}_{p}$ acts Frobeniusly on the abelian $q$-group
    $N/N'$ 
	such that one of the following holds.
	\begin{description}
		\item[(1)] $N/N'$ is a homogeneous $P$-module over $\mathbb{F}_q$, and $\mathrm{cod}(G/N')=\{ 1,p,q^{d} \}$ where $q^{d}$ is the order of a $G$-chief factor in $N/N'$.
		Moreover, either $\mathbf{C}_{N}(P)>1$, or $G$ is a Frobenius group with complement $P\cong \mathsf{C}_{p}$.
		\item[(2)] Either $\exp(N/N')= q^{2}$ and all $G$-chief factors in $N/N'$ are isomorphic as $P$-modules, or $N/N'$ is an elementary abelian $q$-group such that $\mathcal{S}_{P}(N/N')=\{ U,W \}$ with $|U|=|W|$. 
		In both cases, $G$ is a Frobenius group with complement $P\cong \mathsf{C}_{p}$ 
		and kernel $N$ of nilpotency class at most $2$, and 
		$\mathrm{cod}(G)=\mathrm{cod}(G/N')=\{ 1,p,q^{d},q^{2d} \}$ where $q^{d}$ is the order of a $G$-chief factor in $N/N'$.
	\end{description}
\end{lem}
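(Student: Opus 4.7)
The plan is to begin from Lemma~\ref{lem: P/C acts fpf on N/N'}, which immediately gives $P\cong \mathsf{C}_p$ acting Frobeniusly on $N/N'$. Consequently $G/N'$ is a Frobenius group with cyclic complement of prime order $p$ and abelian $q$-kernel $N/N'$, and Lemma~\ref{lem: size of chief factor in Frob ker}(1) guarantees that every $G$-chief factor in $N/N'$ has order $q^d$, where $d$ is the multiplicative order of $q$ modulo $p$.

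Next I would analyze the $P$-module structure of $N/N'$. Since $(|P|,|N/N'|)=1$, the abelian group $N/N'$ decomposes as a direct product of $P$-indecomposable summands, each homocyclic with all chief factors in a single $P$-isomorphism class. Let $\mathcal{A}$ denote the set of distinct isomorphism classes of such chief factors. Applying Lemma~\ref{lem: large orbit}(2) to the Frattini-trivial quotient $G/\Phi(N)$, for every $\mathcal{B}\subseteq\mathcal{A}$ the product $q^{d|\mathcal{B}|}\in\mathrm{cod}(G/\Phi(N))\subseteq\mathrm{cod}(G)$. Combined with $\{1,p\}\subseteq\mathrm{cod}(G)$ and $|\mathrm{cod}(G)|=4$, this forces $|\mathcal{A}|\le 2$. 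In the homogeneous case $|\mathcal{A}|=1$, Lemma~\ref{lem: frob, abel ker, general case, cod} applied to $G/N'$ yields $\mathrm{cod}(G/N'\mid N/N')=\{q^{kd}:1\le k\le \log_q \exp(N/N')\}$, so $\exp(N/N')\le q^2$. This produces three sub-cases: (i) $|\mathcal{A}|=1$ with $\exp(N/N')=q$, where $\mathrm{cod}(G/N')=\{1,p,q^d\}$ and the fourth codegree of $G$ arises from $\mathrm{Irr}(G|N')$, giving Part~(1); (ii) $|\mathcal{A}|=1$ with $\exp(N/N')=q^2$, giving the first alternative of Part~(2); (iii) $|\mathcal{A}|=2$, which forces $N/N'$ elementary abelian with $|U|=|W|=q^d$ and yields the second alternative of Part~(2). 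In both (ii) and (iii) we already have $|\mathrm{cod}(G/N')|=4$, so $\mathrm{cod}(G)=\mathrm{cod}(G/N')=\{1,p,q^d,q^{2d}\}$.

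To finish Part~(2), I must establish that $G$ is a Frobenius group with kernel $N$ of nilpotency class at most $2$. The Frobenius part is quick: since $P$ acts Frobeniusly on $N/N'$, a direct computation gives $G'=N$, so $\mathrm{cod}(G|G')=\{q^d,q^{2d}\}$ contains no multiple of $p$, and Lemma~\ref{lem: qww} yields that $P$ acts Frobeniusly on $N$. For Part~(1), the dichotomy $\mathbf{C}_N(P)>1$ versus $G$ Frobenius follows from the same observation that $\mathbf{C}_N(P)\le N'$ (because $P$ is Frobenius on $N/N'$), so the two alternatives correspond exactly to $\mathbf{C}_{N'}(P)>1$ or $\mathbf{C}_{N'}(P)=1$.

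The hard part is the nilpotency-class bound in Part~(2). The plan is to argue by contradiction: assuming $\gamma_3(N)>1$, pick a minimal $G$-invariant subgroup $M\le \gamma_3(N)\le N'\cap\mathbf{Z}(N)$ and choose a nonlinear $\theta\in\mathrm{Irr}(N)$ whose kernel misses $M$. Because $G$ is Frobenius with kernel $N$, the induced character $\chi=\theta^G$ is irreducible with $\ker(\chi)\cap M=1$ and $\mathrm{cod}(\chi)=|N:\ker(\chi)|/\theta(1)$. Using the homocyclic (case~(ii)) or two-class elementary abelian (case~(iii)) structure of $N/N'$ together with the fixed chief-factor dimension $d$ from Lemma~\ref{lem: size of chief factor in Frob ker}, the goal is to show that such a $\chi$ must have $q$-part strictly exceeding $q^{2d}$, contradicting $\mathrm{cod}(G)=\{1,p,q^d,q^{2d}\}$.
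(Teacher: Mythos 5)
Your skeleton parallels the paper's argument (start from Lemma~\ref{lem: P/C acts fpf on N/N'}, use Lemmas~\ref{lem: large orbit} and \ref{lem: frob, abel ker, general case, cod} to control $N/N'$, use Lemma~\ref{lem: qww} for the Frobenius claim, and a minimal-counterexample argument for the class bound), but as written it has genuine gaps. First, in your sub-case (iii) you assert that $|\mathcal{A}|=2$ \emph{forces} $N/N'$ to be elementary abelian; nothing you have proved gives this. Your application of Lemma~\ref{lem: large orbit}(2) only sees the Frattini quotient $N/\Phi(N)$, and your exponent bound via Lemma~\ref{lem: frob, abel ker, general case, cod} was carried out only in the homogeneous case, so the configuration ``two isomorphism classes of chief factors and $\exp(N/N')=q^{2}$'' is not excluded. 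This is exactly the point where the paper works hardest in this part: it writes $N/N'=\overline{E}\times\overline{D}$ with $\overline{E}$ indecomposable of maximal exponent, reduces to a minimal counterexample, and kills the mixed case by producing $\chi\in\mathrm{Irr}(\overline{G}\,|\,\epsilon\times\delta)$ with $\epsilon$ of order $q^{2}$ and $\delta$ in the second class, whose codegree is $|\overline{E}|\cdot|\overline{D}|=q^{3d}\notin\mathrm{cod}(G)$. Second, before invoking Lemma~\ref{lem: qww} you need $p$ to divide no member of $\mathrm{cod}(G|G')$; you simply declare $\mathrm{cod}(G|G')=\{q^{d},q^{2d}\}$, but a priori some $\chi\in\mathrm{Irr}(G|G')$ could have codegree $p$. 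The paper rules this out by a short Clifford-theoretic argument ($\ker(\chi)\le N$, $\chi_N=\theta$ irreducible, $\theta(1)=|N:\ker(\chi)|$ forces $\theta$ linear).

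The largest gap is the nilpotency-class bound, which you yourself leave as a plan: ``the goal is to show that such a $\chi$ must have $q$-part strictly exceeding $q^{2d}$'' is a statement of intent, not an argument, and it is not clear it can be realized directly (note also that $\gamma_3(N)\le \mathbf{Z}(N)$ is false in general, though a minimal normal subgroup of $G$ inside $N$ is indeed central in $N$). The paper's actual proof takes a different route: in a minimal counterexample it first shows $|\mathrm{cod}(G/D)|=4$ for every minimal normal subgroup $D$ (using \cite{alizadeh2019,alizadeh2022} to exclude $|\mathrm{cod}(G/D)|\le 3$ when $N/D$ is nonabelian), deduces that $G$ has a unique minimal normal subgroup $D$ with $c(N/D)=2$, and then, for $\theta\in\mathrm{Irr}(N|D)$ and $\chi\in\mathrm{Irr}(G|\theta)$, uses $\ker(\chi)=1$, $\mathrm{cod}(\chi)\le q^{2d}$, the uniform chief-factor size $q^{d}$ from Lemma~\ref{lem: size of chief factor in Frob ker}, and $\theta(1)^{2}\mid |N:D|$ to force $|N:D|=q^{2d}$ with $D<N'$, which is incompatible with $q^{2d}\in\mathrm{cod}(G/N')$ --- the contradiction comes from $N/N'$ being too small, not from a codegree exceeding $q^{2d}$. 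Until you supply arguments for these three points (the mixed two-class case, $p\notin\mathrm{cod}(G|G')$, and the class bound), the proof is incomplete.
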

\begin{proof}
	By Lemma \ref{lem: P/C acts fpf on N/N'},
	$P\cong \mathsf{C}_{p}$ acts Frobeniusly on the abelian $q$-group
    $N/N'$.
	Set $\overline{G}=G/N'$.

	 Assume that $\overline{N}$ is a homogeneous $P$-module over $\mathbb{F}_q$.
	 Let $q^{d}$ be the order of a $G$-chief factor in $\overline{N}$.
	 As $1,p\in \mathrm{cod}(\overline{G})$, 
	 Lemma \ref{lem: frob, abel ker, general case, cod}
	 yields $\mathrm{cod}(\overline{G})=\{ 1,p,q^{d} \}$.
	 If $\mathbf{C}_{N}(P)=1$, as $P\cong \mathsf{C}_{p}$, it follows that
	 $G$ is a Frobenius group with complement $P\cong \mathsf{C}_{p}$.

	 Assume that $\exp(\overline{N})=q$ but $\overline{N}$ is not a homogeneous $P$-module over $\mathbb{F}_q$.
	 So, part (2) of Lemma \ref{lem: large orbit} forces
	 $\mathcal{S}_{P}(\overline{N})=\{ U,W \}$ with $|U|=|W|$,
	 and we obtain 
	 $\mathrm{cod}(G)=\mathrm{cod}(\overline{G})=\{ 1,p,q^{d},q^{2d} \}$ where $q^{d}:=|U|$.

	 Assume that $\exp(\overline{N})>q$.
     Then $\overline{N}=\overline{E}\times \overline{D}$,
	 where $\overline{D}\unlhd\overline{G}$
	 and $\overline{E}$ is a $\overline{P}$-indecomposable abelian subgroup of $\overline{N}$
	 with $\exp(\overline{E})=\exp(\overline{N})$.
	 Consequently, $\overline{EP}$ is isomorphic to $\overline{G}/\overline{D}$. 
	 Since $1,p\in \mathrm{cod}(\overline{G}/\overline{D})$ and $|\mathrm{cod}(\overline{G}/\overline{D})|\leq 4$,
	 Lemma \ref{lem: frob, abel ker, general case, cod} forces that
     all $\overline{G}$-chief factors in $\overline{E}$
	 are isomorphic as $P$-modules.
	 Moreover, $\exp(\overline{N})=\exp(\overline{E})=q^{2}$,
	 and $\mathrm{cod}(G)=\mathrm{cod}(\overline{G})=\mathrm{cod}(\overline{G}/\overline{D})=\{ 1,p,q^{d},q^{2d}\}$ where $q^{2d}=|\overline{E}|$ and $q^{d}$ is the order of a $\overline{G}$-chief factor in $\overline{E}$.
	 We claim next that every $\overline{G}$-chief factor in $\overline{N}$ is isomorphic as a $P$-module.
	  Let $\overline{G}$ be a counterexample of minimal possible order.
	  As $|\mathrm{cod}(\overline{G}/\overline{D})|=4$, the minimality of $\overline{G}$ forces $\overline{D}$ 
	  to be a minimal normal subgroup of $\overline{G}$ that is not isomorphic to $\Omega_1(\overline{E})$ as a $P$-module.
	  In particular, $\overline{D}$ and $\Omega_1(\overline{E})$ are the only minimal normal subgroups of $\overline{G}$.
	  Let $\epsilon\in \mathrm{Irr}(\overline{E})$ be of order $q^{2}$, $\delta \in \mathrm{Irr}(\overline{D})^\sharp$ and $\chi\in \mathrm{Irr}(\overline{G}|\epsilon\times \delta)$.
	  Since $\ker(\chi)\cap \overline{E}=\ker(\chi)\cap \overline{D}=1$,
	  we have $\ker(\chi)=1$.
	  Note also that $\chi(1)=p$, and therefore by calculation $\mathrm{cod}(\chi)=|\overline{E}|\cdot |\overline{D}|=q^{3d}\notin \mathrm{cod}(\overline{G})$, a contradiction.

	 Suppose that $\mathrm{cod}(G)=\mathrm{cod}(G/N')=\{1,p, q^{d}, q^{2d} \}$ with $q^{d}$ the order of a $G$-chief factor in $N/N'$.
     Then $\mathrm{cod}(G|G')\subseteq \{ q^{d},q^{2d} \}$.
	 Indeed, otherwise, $p=\mathrm{cod}(\chi)$ for some $\chi \in \mathrm{Irr}(G|G')$;
	 since $|G|_p=p$, it follows that $\ker(\chi)\leq N$ and $\chi(1)=|N:\ker(\chi)|$;
	 let $\theta$ be an irreducible constituent of $\chi_N$, and observe that $p\nmid \chi(1)$;
	 so, $\chi_N=\theta$ and therefore $\theta(1)=|N:\ker(\theta)|$ which contradicts $\chi(1)=\theta(1)>1$.
      Hence, Lemma \ref{lem: qww} forces $G$ to be a Frobenius group with complement $P\cong \mathsf{C}_{p}$ and kernel $N$.
	 We next show that $c(N)\leq 2$.
     To see that, let $G$ be a counterexample of minimal possible order.
	 As $N=\mathbf{F}(G)$,
	 every minimal normal subgroup of the solvable $G$ is contained in $N$.
     We claim that $|\mathrm{cod}(G/D)|=4$ for each minimal normal subgroup $D$ of $G$.
	 Otherwise, there is some minimal normal subgroup $D$ of $G$ such that $|\mathrm{cod}(G/D)|\leq 3$; 
	 as $c(N)>2$, $N/D$ is a nontrivial $q$-group;
	 note that $PD/D$ is also a nontrivial $p$-group, and so part (3) of Lemma \ref{lem: direct product, kernel and codegrees} implies that $h(G/D)\neq 1$;
	 as $h(G/D)\leq h(G)=2$, we must have $h(G/D)=2$, whence $|\mathrm{cod}(G/D)|=3$; 
	 now \cite[Theorem 3.4]{alizadeh2019} and \cite[Theorem 0.1]{alizadeh2022}
	 forces $N/D$ to be abelian, a contradiction.
	 Consequently, the minimality of $G$ yields that $G$ has a unique minimal normal subgroup, say $D$, and $c(N/D)=2$.
	 Let $\theta\in \mathrm{Irr}(N|D)$ and $\chi\in \mathrm{Irr}(G|\theta)$.
	 Then $\chi(1)=p\cdot \theta(1)$ and $\ker(\chi)=1$,
	 so $\mathrm{cod}(\chi)=|N|/\theta(1)$.
	 As $G$ is a Frobenius group with complement $P\cong \mathsf{C}_{p}$ and kernel $N$ a $q$-group, every $G$-chief factor in $N$ shares the same order $q^{d}$ by part (1) of Lemma \ref{lem: size of chief factor in Frob ker}.
      Hence, $|N:D|=q^{sd}$ for some positive integer $s$ and $|D|=q^{d}$.
	 Recall that $\mathrm{cod}(G)=\mathrm{cod}(G/N')=\{ 1,p,q^{d},q^{2d} \}$,
	 and so $|N|/\theta(1)=\mathrm{cod}(\chi)\leq q^{2d}$, implying $\theta(1)\geq q^{sd-d}$.
	 As $D\leq \mathbf{Z}(N)$, we have $\theta(1)^{2}\mid |N:D|$, so $2(sd-d)\leq sd$.
	 By calculation, $s\leq 2$.
	 Since $c(N/D)=2$, we must have $s=2$ and $D<N'$.
     Then $|N:D|=q^{2d}$ which contradicts $\mathrm{cod}(G/N')=\{ 1,p,q^{d},q^{2d} \}$.
\end{proof}

Let a finite group $P$ act on a finite group $N$.
We set $\mathrm{Irr}_P(N)=\{ \theta\in \mathrm{Irr}(N):\theta^x=\theta,~\text{for all}~x\in P \}$.

\begin{lem}\label{lem: height=2, |cod(G/N)|=2, C=1, not Frob, cN=2}
	Assume Hypothesis \ref{hy: height=2, |cod(G/N)|=2} and that $C=1$.
    Assume also that $\mathbf{C}_{N}(P)>1$ and that $c(N)=2$.
    If $|\mathrm{cod}(G)|=4$,
	then $P\cong \mathsf{C}_{p}$, and $N=\mathbf{F}(G)$ is a semi-extraspecial $q$-group such that $N'=\mathbf{Z}(G)$ and $N/N'$ is a homogeneous $P$-module over $\mathbb{F}_q$.
	Moreover, $\mathrm{cod}(G)=\{ 1,p,q^{d}, pq \sqrt{|N:N'|} \}$ where 
	$q^{d}$ is the order of a $G$-chief factor in $N/N'$.   
\end{lem}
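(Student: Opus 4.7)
The plan is to proceed in three stages.

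\emph{Stage I (setup from earlier lemmas).} Lemma \ref{lem: P/C acts fpf on N/N'} together with $C=1$ immediately gives $P\cong\mathsf{C}_p$. Because $\mathbf{C}_N(P)>1$ forbids $G$ from being Frobenius, Lemma \ref{lem: height=2, |cod(G/N)|=2, C=1, Frob} part (1) applies and yields that $N/N'$ is an elementary abelian homogeneous $P$-module over $\mathbb{F}_q$ with $\mathrm{cod}(G/N')=\{1,p,q^d\}$, where $q^d$ is the order of a $G$-chief factor in $N/N'$. Consequently $\mathrm{cod}(G)=\{1,p,q^d,n^*\}$ with $n^*\in \mathrm{cod}(G\mid N')$; since $G'=N$ and $\mathbf{C}_N(P)>1$ precludes Frobenius action of $P$ on $G'$, Lemma \ref{lem: qww} forces $p\mid n^*$.

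\emph{Stage II ($N'=\mathbf{Z}(G)$).} Using $c(N)=2$ and $C=1$, one has the identity $\mathbf{Z}(G)=\mathbf{Z}(N)\cap\mathbf{C}_N(P)$; combined with $\mathbf{C}_N(P)\le N'$ (from the Frobenius action on $N/N'$) and $N'\le\mathbf{Z}(N)$, the claim reduces to showing $P$ centralizes $N'$. Suppose not; coprime action on $N'/\Phi(N')$ produces $\mu\in\mathrm{Irr}(N')^\sharp$ of order $q$ whose $P$-orbit has size $p$. For $\theta\in\mathrm{Irr}(N\mid\mu)$ one has $\mathrm{I}_G(\theta)=N$, hence $\chi:=\theta^G\in\mathrm{Irr}(G)$ with $\chi(1)=p\theta(1)$, $\ker\chi\le N$, and $\mathrm{cod}(\chi)=|N:\ker\chi|/\theta(1)$ a power of $q$; since $n^*$ carries a factor of $p$, $\mathrm{cod}(\chi)=q^d$. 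The Frobenius action of $P$ on the faithful $\mathbb{F}_q[P]$-module $\langle\mu^P\rangle$ gives $|N':\ker\chi\cap N'|\ge q^d$, whence $|N:N'\ker\chi|\le \theta(1)$. Since every $P$-submodule of $N/N'$ has $\mathbb{F}_q$-dimension divisible by $d$, one concludes $\theta(1)=q^{sd}$ for an integer $s\ge 1$ (using nonlinearity of $\theta$). The basic inequality $\mathrm{cod}(\chi)\ge\chi(1)$ now gives $q^d\ge pq^{sd}$, i.e., $p\le q^{(1-s)d}\le 1$, a contradiction.

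\emph{Stage III ($N$ is semi-extraspecial and $n^*=pq\sqrt{|N:N'|}$).} Since $N'=\mathbf{Z}(G)$, every $\mu\in\mathrm{Irr}(N')^\sharp$ is $G$-invariant, and Glauberman correspondence (using $\mathbf{C}_N(P)=N'$) yields a unique $P$-invariant $\theta_\mu\in\mathrm{Irr}(N\mid\mu)$. By Lemma \ref{lem: semi-extra}, semi-extraspeciality is equivalent to $|\mathrm{Irr}(N\mid\mu)|=1$ for every $\mu$; granted this, the degree equation $\sum_\theta\theta(1)^2=|N:N'|$ forces $\theta_\mu(1)=\sqrt{|N:N'|}$. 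Assuming for contradiction some $\mathrm{Irr}(N\mid\mu)$ contains a non-$P$-invariant $\theta$, we again form $\chi=\theta^G$ with $\mathrm{cod}(\chi)=q^d$; but now $\mu$ is $P$-invariant of some order $q^b$, so $|N':\ker\chi\cap N'|=|N':\ker\mu|=q^b$. A parallel analysis gives $\theta(1)=q^{s'd+b-d}$ for a suitable $s'\ge 0$ and the inequality $p\le q^{2d-s'd-b}$; a careful case analysis, combining this with $\theta(1)\ge q$, the bound $\theta(1)^2\le|N:N'|$, and the interaction with codegrees of Gallagher extensions $\hat\theta_\mu\lambda$ (which split into a ``$P\le\ker$'' case giving $\mathrm{cod}=\mathrm{cod}(\theta_\mu)$ and a ``$P\not\le\ker$'' case giving $\mathrm{cod}=p\,\mathrm{cod}(\theta_\mu)$), produces the desired contradiction. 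Once semi-extraspeciality holds, $\ker\theta_\mu=\ker\mu$ has index $q$ in $N'$, so $\mathrm{cod}(\theta_\mu)=q\sqrt{|N:N'|}$; moreover $[N,P]N'=N$ forces $[N,P]\not\le\ker\theta_\mu$ (hence no extension has $P$ in its kernel), so every $\hat\theta_\mu\lambda$ is of Case B with $\mathrm{cod}(\hat\theta_\mu\lambda)=p\,\mathrm{cod}(\theta_\mu)=pq\sqrt{|N:N'|}$, identifying $n^*$.

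The main obstacle is Stage III: whereas Stage II's Frobenius action of $P$ on $N/N'$ closes the inequality sharply, the $P$-\emph{invariance} of $\mu$ in Stage III weakens $|N':\ker\mu|$ from $q^d$ to the variable $q^b$, and the resulting inequality $p\le q^{2d-s'd-b}$ admits several parameter settings (notably $s'=1$, $b=1$) that are not immediately contradictory. Ruling out these configurations requires a more delicate argument that must consistently handle every $\mu\in\mathrm{Irr}(N')^\sharp$ and bootstrap the (not-yet-established) structural properties of $N$, which is the technical heart of the lemma.
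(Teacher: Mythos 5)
Your Stage III is precisely where the argument fails, and you acknowledge this yourself: showing that no $\mu\in\mathrm{Irr}(N')^\sharp$ (now $P$-invariant, since $N'=\mathbf{Z}(G)$) admits a non-$P$-invariant $\theta\in\mathrm{Irr}(N|\mu)$ is the technical heart of the lemma, and your inequality $p\le q^{2d-s'd-b}$ does not close (indeed $s'=1$, $b=1$ is consistent). The paper closes this case not by degree inequalities for the single character $\theta$, but by a counting argument inside the quotient $\overline{G}=G/\ker(\theta^{G})$: from $\mathrm{cod}(\theta^{G})=q^{d}$ and part (4) of Lemma \ref{lem: basic facts on codegree} one gets $|\overline{G}|<q^{2d}$, which forces $\overline{N}/\overline{N}'$ to be a $G$-chief factor of order $q^{d}$ and $|\overline{N}'|<q^{d}$; part (2) of Lemma \ref{lem: P act fpf on N/N'} then gives $\overline{N}'=\mathbf{C}_{\overline{N}}(\overline{P})=\mathbf{Z}(\overline{G})$; consequently every $\psi\in\mathrm{Irr}_{\overline{P}}(\overline{N})^\sharp$ has $\ker(\psi)\le\overline{N}'$ of index $q$ and $\overline{N}/\ker(\psi)$ extraspecial of order $q^{d+1}$, so $\psi(1)=q^{d/2}$; finally Glauberman's correspondence gives $|\mathrm{Irr}_{\overline{P}}(\overline{N})^\sharp|=|\overline{N}'|-1$, and the degree sum $\sum_{\psi}\psi(1)^{2}=q^{d}(|\overline{N}'|-1)=|\overline{N}|-|\overline{N}:\overline{N}'|=\sum_{\alpha\in\mathrm{Irr}(\overline{N}|\overline{N}')}\alpha(1)^{2}$ shows $\mathrm{Irr}_{\overline{P}}(\overline{N})^\sharp=\mathrm{Irr}(\overline{N}|\overline{N}')$, contradicting the existence of your non-invariant $\theta$. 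This exhaustion-by-counting step is the missing idea; without it your proposed ``careful case analysis'' of the parameters (together with Gallagher extensions of $\theta_\mu$) does not produce a contradiction. Note also that the paper runs the logic in the opposite order: it first proves $\mathrm{Irr}_{P}(N)^\sharp=\mathrm{Irr}(N|N')$ by this counting and only then deduces $N'=\mathbf{C}_{N'}(P)=\mathbf{Z}(G)$ from the Frobenius quotient $G/\mathbf{C}_{N'}(P)$, so once the counting argument is in place your separate Stage II becomes unnecessary.

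Two smaller points. In Stage I, Lemma \ref{lem: qww} only yields that $p$ divides \emph{some} member of $\mathrm{cod}(G|G')$; to conclude $p\mid n^{*}$ you must also rule out $p\in\mathrm{cod}(G|G')$, which requires the short argument the paper gives at the end of Lemma \ref{lem: height=2, |cod(G/N)|=2, C=1, Frob} (a character of codegree $p$ with $|G|_{p}=p$ would restrict irreducibly to $N$ and force $\theta(1)\ge\theta(1)^{2}$). In Stage II, the action of $P$ on $\langle\mu^{P}\rangle$ need not be Frobenius (orbit sums can be nonzero fixed vectors); what you actually need is Maschke plus the fact that every nontrivial irreducible $\mathbb{F}_{q}[\mathsf{C}_{p}]$-module has dimension $d$, which does give $|N':\ker(\chi)\cap N'|\ge q^{d}$, and likewise the assertion $\theta(1)=q^{sd}$ is unjustified but unnecessary, since $q^{d}\le|N:N'\ker(\chi)|\le\theta(1)$ already yields $\mathrm{cod}(\chi)\ge p\theta(1)>q^{d}$. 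These are patchable; the Stage III gap, as written, is not.
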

\begin{proof}
	By Lemma \ref{lem: height=2, |cod(G/N)|=2, C=1, Frob},
	$P\cong \mathsf{C}_{p}$ acts Frobeniusly on $N/N'$ such that $N/N'$ is a homogeneous $P$-module over $\mathbb{F}_q$, and $\mathrm{cod}(G/N')=\{ 1,p,q^{d} \}$ where $q^{d}$ is the 
	order of a $G$-chief factor in $N/N'$.
	In particular, $\mathbf{C}_{N}(P)\leq N'=\Phi(N)$.
    Also, an application of part (1) of Lemma \ref{lem: P act fpf on N/N'} yields that $N=\mathbf{F}(G)\in \mathrm{Syl}_{q}(G)$ is the unique
	maximal normal subgroup of $G$.
    As $\mathbf{C}_{N}(P)>1$, Glauberman's correspondence \cite[Theorem 13.1]{isaacs1994} implies that $|\mathrm{Irr}_P(N)|=|\mathrm{Irr}(\mathbf{C}_{N}(P))|>1$.
	The Frobenius action of $P$ on $N/N'$ forces $\mathrm{Irr}_P(N)^\sharp \subseteq \mathrm{Irr}(N|N')$.
    Let $\varphi\in \mathrm{Irr}_P(N)^\sharp$, $\omega\in \mathrm{Irr}(G|\varphi)$, and observe that $|G:N|=p$ is a prime.
    It follows that $\omega_N=\varphi$.
	As $N$ is the unique maximal normal subgroup of $G$,
	$\ker(\omega)=\ker(\varphi)<N$.
    Note that 
	$$\mathrm{cod}(\omega)=p\cdot \frac{|N:\ker(\varphi)|}{\varphi(1)}=p\cdot \mathrm{cod}(\varphi)\notin \mathrm{Irr}(G/N'),$$
	and so $\mathrm{cod}(\varphi)$ is a constant for each $\varphi \in \mathrm{Irr}_P(N)^\sharp$.
    Write $\mathrm{cod}(\varphi)=q^{l}$ where $l>0$.
    Then $\mathrm{cod}(G)=\{ 1,p,q^{d},pq^{l} \}$.

    Since $c(N)=2$,
	we have $N'=\Phi(N)\leq \mathbf{Z}(N)$,
	hence $\exp(N')=\exp (N/\mathbf{Z}(N))=q$ and $N'$ is an elementary abelian $q$-group. 
	Also, $N'=\mathbf{C}_{N'}(P)\times [N',P]$ and $\mathbf{C}_{N'}(P)\leq\mathbf{Z}(G)$.
	As $P\cong \mathsf{C}_{p}$ acts Frobeniusly on both $N/N'$ and $[N',P]$,
	we have that $\mathbf{C}_{N'}(P)=\mathbf{Z}(G)$,
	and $G/\mathbf{C}_{N'}(P)$ is a Frobenius group with kernel $N/\mathbf{C}_{N'}(P)$.

We claim that $\mathrm{Irr}_P(N)^\sharp=\mathrm{Irr}(N|N')$.
	Assume not. 
	Let $\theta\in \mathrm{Irr}(N|N')-\mathrm{Irr}_P(N)$ and $\chi \in \mathrm{Irr}(G|\theta)$.
	As $|G:N|=p$ is a prime, $\chi=\theta^{G}$.
	Note that $\mathrm{cod}(\chi)=q^{k}$ for some $k>0$,
	and therefore $\mathrm{cod}(\chi)=q^{d}$.
	Since $|G/\ker(\chi)|<\mathrm{cod}(\chi)^{2}=q^{2d}$,
	$N/N'\ker(\chi)$ is a $G$-chief factor of order $q^{d}$
	and $|N'\ker(\chi)/\ker(\chi)|<q^{d}$ where $d>1$.
	Set $\overline{G}=G/\ker(\chi)$.
	By part (2) of Lemma \ref{lem: P act fpf on N/N'}, we have $\overline{N}'=\mathbf{C}_{\overline{N}}(\overline{P})=\mathbf{Z}(\overline{G})$.
	Take $\psi\in \mathrm{Irr}_{\overline{P}}(\overline{N})^\sharp$.
	Then $\ker(\psi)\unlhd\overline{G}$.
	Since $\overline{N}/\ker(\psi)$ is nonabelian
	and $\overline{N}/\overline{N}'\ker(\psi)$ is a nontrivial irreducible $\overline{P}$-module over $\mathbb{F}_q$,
	we have $\ker(\psi)\leq \overline{N}'$.
    Because $\overline{N}'$ is an elementary abelian $q$-subgroup of $\mathbf{Z}(N)$, 
	$|\overline{N}':\ker(\psi)|=|\overline{N}':\ker(\psi_{\overline{N}'})|=q$.
    As $\overline{N}/\overline{N}'$ is a $\overline{G}$-chief factor,
	$\overline{N}/\ker(\psi)$ is an extraspecial $q$-group of order $q^{d+1}$.
	In particular, $\psi(1)=q^{d/2}$ for each $\psi \in \mathrm{Irr}_{\overline{P}}(\overline{N})^\sharp$.
	However, as 
   $$\sum_{\psi\in \mathrm{Irr}_{\overline{P}}(\overline{N})^\sharp}\psi(1)^{2}=q^{d}\cdot (|\overline{N}'|-1)=|\overline{N}/\overline{N}'|\cdot (|\overline{N}'|-1)=|\overline{N}|-|\overline{N}/\overline{N}'|=\sum_{\alpha \in \mathrm{Irr}(\overline{N}|\overline{N}')}\alpha(1)^{2}$$
   where the first equality holds as $|\mathrm{Irr}_{\overline{P}}(\overline{N})|=|\mathrm{Irr}(\mathbf{C}_{\overline{N}}(\overline{P}))|=|\overline{N}'|$,
   we conclude that $\mathrm{Irr}_{\overline{P}}(\overline{N})^\sharp=\mathrm{Irr}(\overline{N}|\overline{N}')$ which contradicts the existence of $\theta$.

 Recalling that $G/\mathbf{C}_{N'}(P)$ is a Frobenius group with kernel $N/\mathbf{C}_{N'}(P)$
   and that $\mathrm{Irr}(N|N')=\mathrm{Irr}_P(N)^\sharp$,
   we have $N'=\mathbf{C}_{N'}(P)=\mathbf{Z}(G)$.
   In particular, $|\mathrm{Irr}(N|N')|=|\mathrm{Irr}_P(N)^\sharp|=|\mathrm{Irr}(N')^\sharp|=|N'|-1$.
   Recall that $\varphi \in \mathrm{Irr}_P(N)^\sharp=\mathrm{Irr}(N|N')$ and $\omega\in \mathrm{Irr}(G|\varphi)$ such that $\omega_N=\varphi$ and $\ker(\omega)=\ker(\varphi)$.
   Applying Lemma \ref{lem: semi-extra},
   we conclude that $N$ is a semi-extraspecial $q$-group and $\varphi(1)=\sqrt{|N:N'|}$.
   So, $\varphi=\frac{1}{\varphi(1)}\lambda^N$ for some $\lambda\in \mathrm{Irr}(N')^\sharp$ (see e.g.  \cite[Problem 6.3]{isaacs1994}). 
   Therefore, $\ker(\omega)=\ker(\varphi)=\ker(\varphi_{N'})=\ker(\lambda)$ is a maximal subgroup 
   of the elementary abelian $q$-group $N'$.
   Consequently, $\mathrm{cod}(\omega)=pq \sqrt{|N:N'|}$,
   and therefore, $\mathrm{cod}(G)=\{ 1,p,q^{d}, pq \sqrt{|N:N'|} \}$.
\end{proof}

\begin{prop}\label{prop: height=2, |cod(G/N)|=2, C=1, not Frob}
	Assume Hypothesis \ref{hy: height=2, |cod(G/N)|=2} and that $C=1$.
    Assume that 
	$\mathbf{C}_{N}(P)>1$.
    Then $|\mathrm{cod}(G)|=4$ if and only if $P\cong \mathsf{C}_{p}$, and $N$ is a semi-extraspecial $q$-group such that $N'=\mathbf{Z}(G)$ and $N/N'$ is a homogeneous $P$-module over $\mathbb{F}_q$.
	Also, if $|\mathrm{cod}(G)|=4$, then $\mathrm{cod}(G)=\{ 1,p,q^{d}, pq \sqrt{|N:N'|} \}$ where 
	$d$ is the multiplicative order of $q$ modulo $p$.  
\end{prop}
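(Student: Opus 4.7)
The plan is to prove the biconditional by treating each direction separately: the reverse by direct codegree computation, and the forward by reduction to Lemma~\ref{lem: height=2, |cod(G/N)|=2, C=1, not Frob, cN=2} after showing $c(N)\leq 2$.

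For the reverse direction, assume the stated structure. Since $G/N'$ is a Frobenius group with cyclic complement $P$ and homogeneous abelian kernel $N/N'$, Lemma~\ref{lem: frob, abel ker, general case, cod} gives $\mathrm{cod}(G/N')=\{1,p,q^{d}\}$. For $\chi\in\mathrm{Irr}(G|N')$, let $\lambda$ be an irreducible constituent of $\chi_{N'}$; since $N'\leq\mathbf{Z}(G)$, $\lambda$ is $G$-invariant, and combining the semi-extraspecial property with Lemma~\ref{lem: semi-extra} produces a unique $G$-invariant $\theta\in\mathrm{Irr}(N|\lambda)$ of degree $\sqrt{|N:N'|}$, which extends to $G$ by coprime action. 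The critical observation is that the Frobenius action of $P$ on $N/N'$ forces $[P,N]\cdot N'=N$, so $N/[P,N]$ is isomorphic to an abelian quotient of $N'$; hence every character of $N$ trivial on $[P,N]$ is linear. Since $\theta$ is nonlinear and any extension $\chi$ of $\theta$ with $P\leq\ker(\chi)$ would force $[P,N]\leq\ker(\chi)$ and hence $\theta$ trivial on $[P,N]$, we conclude $\ker(\chi)=\ker(\lambda)$. A routine computation then yields $\mathrm{cod}(\chi)=pq\sqrt{|N:N'|}$, so $\mathrm{cod}(G)=\{1,p,q^{d},pq\sqrt{|N:N'|}\}$.

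For the forward direction, assume $|\mathrm{cod}(G)|=4$. Lemma~\ref{lem: P/C acts fpf on N/N'} gives $P\cong\mathsf{C}_{p}$ acting Frobeniusly on $N/N'$, and part~(1) of Lemma~\ref{lem: height=2, |cod(G/N)|=2, C=1, Frob} (applicable since $\mathbf{C}_{N}(P)>1$) yields homogeneity of $N/N'$ and $\mathrm{cod}(G/N')=\{1,p,q^{d}\}$. Thus $\mathrm{cod}(G)=\{1,p,q^{d},c^{*}\}$ for some $c^{*}$, and a brief degree argument excludes $1$ and $p$ from $\mathrm{cod}(G|N')$. By Glauberman's correspondence, $\mathrm{Irr}_{P}(N)^{\sharp}\subseteq\mathrm{Irr}(N|N')$ is nonempty; using part~(1) of Lemma~\ref{lem: P act fpf on N/N'} to identify $N$ as the unique maximal normal subgroup of $G$, the opening argument from the proof of Lemma~\ref{lem: height=2, |cod(G/N)|=2, C=1, not Frob, cN=2} transfers verbatim and shows that for $\varphi\in\mathrm{Irr}_{P}(N)^{\sharp}$ and any extension $\hat{\varphi}$ to $G$ we have $\ker(\hat{\varphi})=\ker(\varphi)$, whence $\mathrm{cod}(\hat{\varphi})=p\cdot\mathrm{cod}(\varphi)=c^{*}$. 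In particular $\mathrm{cod}(\varphi)=c^{*}/p=:q^{l}$ is constant across $\mathrm{Irr}_{P}(N)^{\sharp}$.

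The main and hardest task is then to prove $c(N)\leq 2$, after which Lemma~\ref{lem: height=2, |cod(G/N)|=2, C=1, not Frob, cN=2} supplies the full structural conclusion and the claimed codegree set. My plan is to proceed by contradiction, assuming $c(N)\geq 3$ and setting $M=[N',N]>1$. Then $c(N/M)\leq 2$, and $G/M$ satisfies Hypothesis~\ref{hy: height=2, |cod(G/N)|=2}; after verifying $|\mathrm{cod}(G/M)|=4$ and $\mathbf{C}_{N/M}(P)>1$ — the latter requiring a case split according to whether $\mathbf{C}_{N}(P)\leq M$ — Lemma~\ref{lem: height=2, |cod(G/N)|=2, C=1, not Frob, cN=2} forces $N/M$ to be semi-extraspecial, pinning down $q^{l}=q\sqrt{|N:N'|}$. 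Next, comparing $|\mathrm{Irr}_{P}(N)|$ and $|\mathrm{Irr}_{P}(N/M)|$ via Glauberman produces a $P$-invariant irreducible character of $N$ nontrivial on $M$, and tracking its kernel and degree through Lemma~\ref{lem: semi-extra} should show that its codegree strictly exceeds $q\sqrt{|N:N'|}$, contradicting the constancy of $\mathrm{cod}(\varphi)$ on $\mathrm{Irr}_{P}(N)^{\sharp}$. I expect the main technical difficulty to be this codegree comparison, which rests on a detailed analysis of the $P$-action on $\gamma_{3}(N)$ and the characters of $N$ detecting it.
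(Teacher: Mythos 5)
Your reverse direction is essentially the paper's argument (your $[P,N]N'=N$ observation is a correct substitute for the paper's appeal to part (1) of Lemma~\ref{lem: P act fpf on N/N'} to force $\ker(\chi)\leq N$, once you add that a normal subgroup not contained in $N$ must contain a Sylow $p$-subgroup), and the opening of your forward direction — homogeneity of $N/N'$ via Lemma~\ref{lem: height=2, |cod(G/N)|=2, C=1, Frob}, constancy of $\mathrm{cod}(\varphi)$ on $\mathrm{Irr}_P(N)^{\sharp}$, and the reduction of everything to proving $c(N)\leq 2$ before invoking Lemma~\ref{lem: height=2, |cod(G/N)|=2, C=1, not Frob, cN=2} — also matches the paper. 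The genuine gap is in your plan for $c(N)\leq 2$, which is where all the work lies.

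Concretely, you replace the paper's minimal counterexample by quotienting by $M=\gamma_3(N)$, and three steps are missing. First, $\mathbf{C}_{N/M}(P)>1$ is (by coprime action) equivalent to $\mathbf{C}_{N}(P)\not\leq M$; the case $\mathbf{C}_{N}(P)\leq M$ is only flagged, not treated, and the analogous point in the paper (Step 2 of its proof) needs a genuine argument using part (2) of Lemma~\ref{lem: P act fpf on N/N'} together with the bound $|G:\ker(\chi)|<\mathrm{cod}(\chi)^{2}$. Second, your Glauberman comparison of $|\mathrm{Irr}_P(N)|$ with $|\mathrm{Irr}_P(N/M)|$ produces a $P$-invariant character nontrivial on $M$ only if $\mathbf{C}_{N}(P)\cap M\neq 1$; if $\mathbf{C}_{N}(P)\cap M=1$ (which is compatible with $\mathbf{C}_{N}(P)\not\leq M$), then $|\mathrm{Irr}_P(N)|=|\mathrm{Irr}_P(N/M)|$, every $P$-invariant irreducible character of $N$ is inflated from $N/M$, and your contradiction machine produces nothing — this case is not addressed. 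Third, even when such a $\varphi$ exists, the decisive inequality $\mathrm{cod}(\varphi)>q\sqrt{|N:N'|}$ is precisely the hard part, and you leave it as an expectation. In the paper this is where the minimal-counterexample setup pays off: $D$ is the unique minimal normal subgroup, so every $\chi\in\mathrm{Irr}(G|D)$ is faithful; the option $\mathrm{cod}(\chi)=pq\sqrt{|N:N'|}$ is excluded by $\theta(1)^2\leq|N:\mathbf{Z}(N)|$; then $|G|<\mathrm{cod}(\chi)^2=q^{2d}$ forces $N/N'$ to be a chief factor with $|N'|<q^{d}$, part (2) of Lemma~\ref{lem: P act fpf on N/N'} gives $N'=\mathbf{C}_{N}(P)$, and a Glauberman counting identity yields the contradiction. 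With $M=\gamma_3(N)$ you have no faithfulness and none of these estimates, so the key step, as proposed, does not go through.
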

\begin{proof}
	We assume first that $|\mathrm{cod}(G)|=4$.
    By Lemma \ref{lem: height=2, |cod(G/N)|=2, C=1, Frob},
    $G/N'$ is a Frobenius group with complement $PN'/N\cong \mathsf{C}_{p}$ and kernel $N/N'$, and $\mathrm{cod}(G/N')=\{ 1,p,q^{d} \}$ where $q^{d}$ is the order of a $G$-chief factor in $N/N'$.
    Hence, $G'=N$ and $\mathrm{cod}(G/N'|G'/N')=\{ q^d \}$.
    By part (1) of Lemma \ref{lem: size of chief factor in Frob ker}, $d$ is the multiplicative order of $q$ modulo $p$.
    Since $\mathbf{C}_{N}(P)>1$, Lemma \ref{lem: qww} forces $q^{d}$ to be the unique $q$-power
	in $\mathrm{cod}(G|G')$.
	Moreover, $N=\mathbf{F}(G)$ is the unique maximal normal subgroup of $G$ by Lemma \ref{lem: P act fpf on N/N'}.
 By Lemma \ref{lem: height=2, |cod(G/N)|=2, C=1, not Frob, cN=2}, it remains to show that $c(N)=2$.
 Let $G$ be a counterexample of minimal possible order.
  Then $c(N)\geq 3$.
  Let $D$ be a minimal normal subgroup of $G$.
  Then $D\leq \mathbf{F}(G)=N$.
    As $c(N)\geq 3$, $N/D$ is nonabelian.
	We now proceed in the next three steps to conclude a contradiction.

 \textbf{Step 1.} $h(G/D)=2$ and $|\mathrm{cod}(G/D)|=4$.

  We first show that $h(G/D)\geq 2$.
  In fact, otherwise $G/D=N/D\times PD/D$ where $N/D\in \mathrm{Syl}_{q}(G)$ is nonabelian and $PD/D\cong \mathsf{C}_{p}$;
  so, $|\mathrm{cod}(G/D)|\geq 6$ by Lemma \ref{lem: |codG|=2} and part (3) of Lemma \ref{lem: direct product, kernel and codegrees}, a contradiction.
  Since $h(G/D)\leq h(G)=2$, we have $h(G/D)=2$.
  We next prove that $|\mathrm{cod}(G/D)|=4$.
  Indeed, otherwise $|\mathrm{cod}(G/D)|\leq 3$;
  as $h(G/D)=2$, \cite[Theorem 3.4]{alizadeh2019}
  and \cite[Theorem 0.1]{alizadeh2022} forces $N/D$ to be abelian, a contradiction.

 \textbf{Step 2.} $\mathbf{C}_{N/D}(PD/D)>1$.

  Otherwise, as $PD/D\cong \mathsf{C}_{p}$, $G/D$ is a Frobenius group with kernel $N/D$ and complement $PD/D$.
  For $\chi\in \mathrm{Irr}(G/D|N'D/D)$,
   we have $D\leq \ker(\chi)< N$,
  so $\overline{G}:=G/\ker(\chi)$ is a Frobenius group with nonabelian kernel $\overline{N}=N\ker(\chi)/\ker(\chi)$.
  Note that $\mathrm{cod}(\chi)=q^{k}$ for some $k>0$,
  and hence $\mathrm{cod}(\chi)=q^{d}$. 
 Since $|\overline{G}|<\mathrm{cod}(\chi)^{2}=q^{2d}$,
	we deduce that $\overline{N}/\overline{N}'$ is a $G$-chief factor of order $q^{d}$
	and $|\overline{N}'|<q^{d}$ with $d>1$.
	So, part (2) of Lemma \ref{lem: P act fpf on N/N'} implies that $\mathbf{C}_{\overline{N}}(\overline{P})=\overline{N}'>1$,
	which contradicts the fact that $\overline{G}$ is Frobenius group with kernel $\overline{N}$.
 
	\textbf{Step 3.} Conclude a contradiction.

    By Steps 1 and 2, $G/D$ satisfies the hypotheses of the proposition.
    The minimality of $G$ then implies that $D$ is the unique minimal normal subgroup of $G$ and $c(N/D)=2$.
	Hence, $D\leq N'\cap\mathbf{Z}(N)$ and so $c(N)=3$. 
	Applying Lemma \ref{lem: height=2, |cod(G/N)|=2, C=1, not Frob, cN=2} to $G/D$,
	we deduce that 
	$\mathrm{cod}(G)=\mathrm{cod}(G/D)=\{ 1,p,q^{d},pq\sqrt{|N:N'|} \}$, $N/D$ is a semi-extraspecial $q$-group with $N'/D=\mathbf{Z}(G/D)$, and $\mathrm{Irr}_{P}(N/D)^\sharp=\mathrm{Irr}(N/D|N'/D)$ has size $|N'/D|-1$.

  Let $\chi \in \mathrm{Irr}(G|D)$ and $\theta$ an irreducible constituent of $\chi_N$.
   Since $D$ is the unique minimal normal subgroup of $G$,
   we have $\ker(\chi)=1$.
   Because $q\mid \mathrm{cod}(\theta)\mid \mathrm{cod}(\chi)$,
   $\mathrm{cod}(\chi)$ equals either $q^{d}$ or $pq \sqrt{|N:N'|}$.
  Assume that $\mathrm{cod}(\chi)=pq \sqrt{|N:N'|}$.
  Then $p\nmid \chi(1)$, and hence $\chi_N=\theta$.
  As $\mathrm{cod}(\chi)=\frac{p|N|}{\theta(1)}=pq \sqrt{|N:N'|}$,
   we have $\theta(1)^{2}=\frac{|N|\cdot |N'|}{q^{2}}$.
 However, $\theta(1)^{2}\leq |N:\mathbf{Z}(N)|\leq |N:D|\leq |N|/q$ which forces $|N'|\leq q$,
 contradicting $c(N)=3$.
  So, $\mathrm{cod}(G|D)=\{ q^{d} \}$.  
  Moreover, $\mathrm{Irr}(N|D)\cap \mathrm{Irr}_P(N)=\varnothing$.
  Recall that $\mathrm{Irr}_P(N/D)^\sharp=\mathrm{Irr}(N/D|N'/D)$ has size $|N'/D|-1$,
  and hence  $\mathrm{Irr}_P(N)^\sharp=\mathrm{Irr}(N/D|N'/D)$.
  By calculation,
  \begin{equation}\label{eq_2}
	|\mathrm{Irr}(N'/D)|-1=|N'/D|-1= |\mathrm{Irr}(N/D|N'/D)|=|\mathrm{Irr}_P(N)|-1=|\mathrm{Irr}(\mathbf{C}_{N}(P))|-1
\end{equation}
  where the first equality holds because $N'/D$ is abelian, while the third holds by Glauberman's correspondence.
  Since $|G|=|G:\ker(\chi)|<\mathrm{cod}(\chi)^{2}=q^{2d}$,
  we deduce that $N/N'$ is a $G$-chief factor of order $q^{d}$ and $|N'|<q^{d}$.
  So, part (2) of Lemma \ref{lem: P act fpf on N/N'} forces
   $N'=\mathbf{C}_{N}(P)$,
   whence $|\mathrm{Irr}(N'/D)|=|\mathrm{Irr}(N')|$ by (\ref{eq_2}), a contradiction.

   Conversely, we assume that $P\cong \mathsf{C}_{p}$, and $N=\mathbf{F}(G)$ is a semi-extraspecial $q$-group such that $N'=\mathbf{Z}(G)$ and $N/N'$ is a homogeneous $P$-module over $\mathbb{F}_q$.
   Then $G/N'$ is a Frobenius group with kernel $N/N'$.
   Moreover, $N$ is the unique maximal normal subgroup of $G$ by Lemma \ref{lem: P act fpf on N/N'}.
   As $1,p\in \mathrm{cod}(P)\subseteq \mathrm{cod}(G/N')$,
  Lemma \ref{lem: frob, abel ker, general case, cod} gives $\mathrm{cod}(G/N')=\{ 1,p,q^{d} \}$ where $q^{d}$ is the order of a $G$-chief factor in $N/N'$.
   Since $N$ is semi-extraspecial,
   Lemma \ref{lem: semi-extra} yields $\theta(1)=\sqrt{|N:N'|}$ for each $\theta\in \mathrm{Irr}(N|N')$.
   Let $\lambda$ be an irreducible constituent of $\theta_{N'}$, and observe that $\theta=\frac{1}{\theta(1)}\lambda^N$. 
   As $N'=\mathbf{Z}(G)$, $\lambda$ is $G$-invariant
   and so is $\theta$.
   For every $\chi\in \mathrm{Irr}(G|\theta)$,
   we have that $\chi_N=\theta$ and $\ker(\chi)\leq N$, so $\ker(\chi)=\ker(\chi_{N'})=\ker(\lambda)$ is a maximal subgroup of the elementary abelian $q$-group $N'$.
   A routine calculation gives $\mathrm{cod}(\chi)=pq\sqrt{|N:N'|}$.
   Thus, $\mathrm{cod}(G)=\{ 1,p,q^{d},pq\sqrt{|N:N'|} \}$.
\end{proof}

\begin{thm}\label{thm: height=2, |cod(G/N)|=2, classification}
	Assume Hypothesis \ref{hy: height=2, |cod(G/N)|=2} and that $C=1$.
	Then $|\mathrm{cod}(G)|=4$ if and only if one of the following holds.
	\begin{description}
		\item[(1)] $G$ is a Frobenius group with kernel $N \in \mathrm{Syl}_{q}(G)$ and complement $P\cong\mathsf{C}_{p}$, 
		 and one of the following holds.
		\begin{description}
			\item[(1a)] $N$ is an abelian group of exponent $q^{2}$, and all $G$-chief factors in $N$ are isomorphic as a $P$-module.
			\item[(1b)] $N$ is an elementary abelian $q$-group, and there are exactly two non-isomorphic $P$-modules among all $G$-chief factors in $N$. 
			\item[(1c)] $N/\ker(\theta^{G})$ is an ultraspecial $q$-group of order $q^{3d}$ for each nonlinear $\theta\in \mathrm{Irr}(N)$ where $d$ is the multiplicative order of $q$ modulo $p$. 
			Also, either $N/N'$
			is an abelian group of exponent $q^{2}$ and all $G$-chief factors in $N/N'$ are isomorphic as a $P$-module,
            or $N/N'$ is an elementary abelian $q$-group and there are exactly two non-isomorphic $P$-modules among all $G$-chief factors in $N/N'$.
			\item[(1d)] $c(N)\geq 2$ and $N/N'$ is a homogeneous $P$-module over $\mathbb{F}_q$. Also, for each nonlinear $\theta\in \mathrm{Irr}(N)$, 
			there exists a positive integer $k$ such that
			$|N:\ker(\theta^G)|/\theta(1)=q^{k}>q^{d}$ where $d$ is the multiplicative order of $q$ modulo $|P|$.
		\end{description}
		\item[(2)] $P\cong \mathsf{C}_{p}$, and $N$ is a semi-extraspecial $q$-group such that $N'=\mathbf{Z}(G)$ and $N/N'$ is a homogeneous $P$-module over $\mathbb{F}_q$.
	\end{description}
\end{thm}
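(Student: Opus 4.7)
The plan is to combine the earlier lemmas of this section with a case split on $\mathbf{C}_{N}(P)$ and on $N/N'$. Lemma \ref{lem: P/C acts fpf on N/N'} already provides $P\cong\mathsf{C}_{p}$ acting Frobeniusly on $N/N'$, so I first dispose of the case $\mathbf{C}_{N}(P)>1$: Proposition \ref{prop: height=2, |cod(G/N)|=2, C=1, not Frob} applies directly and yields case (2) of the theorem. Assume henceforth $\mathbf{C}_{N}(P)=1$, so that $G=N\rtimes P$ is a Frobenius group with complement $\mathsf{C}_{p}$, and via Clifford's correspondence each $\chi\in\mathrm{Irr}(G|N)$ is of the form $\hat\theta^{G}$ for some $\theta\in\mathrm{Irr}(N)^\sharp$.

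Lemma \ref{lem: height=2, |cod(G/N)|=2, C=1, Frob} then pins down $N/N'$ as either a homogeneous $P$-module with $\mathrm{cod}(G/N')=\{1,p,q^{d}\}$, or one of the two non-homogeneous configurations of its part (2), both of which force $\mathrm{cod}(G)=\{1,p,q^{d},q^{2d}\}$ and $c(N)\le 2$. When $N$ is abelian, $N=N/N'$ realizes one of these configurations; the homogeneous exponent-$q$ sub-case is excluded since it would give $|\mathrm{cod}(G)|=3$, and the remaining two possibilities are exactly cases (1a) and (1b). When $N$ is nonabelian, $\mathrm{cod}(G|N')$ contributes a single extra codegree $q^{k}$, and I split on $k$: for $k=2d$, combining the size bound $|G:\ker(\hat\theta^{G})|\le\mathrm{cod}(\hat\theta^{G})^{2}$ of Lemma \ref{lem: basic facts on codegree}(4) with the class-$2$ inequality $\theta(1)^{2}\le|N:\mathbf{Z}(N)|$ inside $N/\ker(\theta^{G})$, and then applying Lemma \ref{lem: semi-extra}, shows that each such quotient is ultraspecial of order $q^{3d}$, giving case (1c); for $k>d$ with $k\neq 2d$, the non-homogeneous configurations of Lemma \ref{lem: height=2, |cod(G/N)|=2, C=1, Frob}(2) are excluded (they force $k=2d$), so $N/N'$ must be homogeneous and case (1d) is reached.

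For the converse I verify $|\mathrm{cod}(G)|=4$ case by case: case (2) is immediate from Proposition \ref{prop: height=2, |cod(G/N)|=2, C=1, not Frob}; in cases (1a) and (1b) Lemmas \ref{lem: frob, abel ker, general case, cod} and \ref{lem: large orbit} applied to the Frobenius group with abelian kernel give $\mathrm{cod}(G)=\{1,p,q^{d},q^{2d}\}$ by a direct count; in case (1c) the ultraspecial hypothesis together with Lemma \ref{lem: semi-extra} forces $\mathrm{cod}(\hat\theta^{G})=q^{2d}$ for every nonlinear $\theta$; and case (1d) is tautological once $\mathrm{cod}(G/N')=\{1,p,q^{d}\}$ is known and the prescribed extra codegrees collapse to the single common value $q^{k}$. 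The main obstacle I anticipate is the ultraspecial reconstruction inside case (1c): recovering the full Lewis structure on each quotient $N/\ker(\theta^{G})$ from the bare codegree identity $\mathrm{cod}(\hat\theta^{G})=q^{2d}$ will require a Glauberman-type counting of $\mathrm{Irr}_{P}(N/\ker(\theta^{G}))^\sharp$ in the same spirit as the argument used inside the proof of Proposition \ref{prop: height=2, |cod(G/N)|=2, C=1, not Frob}.
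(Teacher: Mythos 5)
Your overall architecture matches the paper's: Lemma \ref{lem: P/C acts fpf on N/N'} gives $P\cong\mathsf{C}_p$ acting Frobeniusly on $N/N'$, the case $\mathbf{C}_{N}(P)>1$ is exactly Proposition \ref{prop: height=2, |cod(G/N)|=2, C=1, not Frob} (case (2)), the abelian case is read off Lemma \ref{lem: height=2, |cod(G/N)|=2, C=1, Frob} (cases (1a),(1b)), and your converse verifications follow the paper's. The genuine gap is in the nonabelian case of the forward direction: you split on the value of the extra codegree ($k=2d$ versus $k>d$, $k\neq 2d$), whereas the theorem's dichotomy between (1c) and (1d) is governed by the structure of $N/N'$ (homogeneous versus the two configurations of part (2) of Lemma \ref{lem: height=2, |cod(G/N)|=2, C=1, Frob}), and (1d) explicitly allows $k=2d$. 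Concretely, if $N/N'$ is homogeneous and the unique value in $\mathrm{cod}(G|N')$ happens to be $q^{2d}$, your split assigns the group to (1c), but the second clause of (1c) fails for such a group (there is only one isomorphism class of chief factors and $N/N'$ is elementary abelian); it belongs to (1d). You would have to prove that this configuration cannot occur, which you do not, and nothing in the paper rules it out. Moreover, your ultraspecial argument in the $k=2d$ branch needs $c(N)\leq 2$, so that $\overline{N}'=\mathbf{Z}(\overline{N})$ in $\overline{N}=N/\ker(\theta^{G})$ before Lemma \ref{lem: semi-extra} can be invoked; in the paper this class bound comes from part (2) of Lemma \ref{lem: height=2, |cod(G/N)|=2, C=1, Frob}, i.e. precisely from the non-homogeneous hypothesis on $N/N'$, and is unavailable in the homogeneous case (a priori $N/\ker(\theta^G)$ could have class $3$ with $|\overline{N}'|=q^{2d}$, and the bare inequality $\theta(1)^2\le|\overline N:\mathbf Z(\overline N)|$ does not yield semi-extraspecial).

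Two further steps of the paper are missing from your plan. First, in the non-homogeneous configurations $\mathrm{cod}(G/N')$ is already the full set $\{1,p,q^{d},q^{2d}\}$, so your premise that $\mathrm{cod}(G|N')$ ``contributes a single extra codegree $q^{k}$'' is unjustified there: a priori some $\chi\in\mathrm{Irr}(G|N')$ could have $\mathrm{cod}(\chi)=q^{d}$, and the paper excludes this per character by showing that $|N/\ker(\chi)|=q^{2d}$ would make $N/\ker(\chi)$ a special group with $\theta(1)^{2}\le q^{d}$, forcing $q^{d}<\mathrm{cod}(\chi)<q^{2d}$, a contradiction; only then does $\mathrm{cod}(\chi)=q^{2d}$, $|N/\ker(\chi)|=q^{3d}$ and the ultraspecial conclusion follow. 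Second, in the homogeneous case you never rule out $k<d$ (the case $k=d$ is trivially impossible since $|\mathrm{cod}(G)|=4$), yet (1d) asserts $q^{k}>q^{d}$; the paper's short argument uses $|G:\ker(\theta^{G})|<\mathrm{cod}(\theta^{G})^{2}\le q^{2d}$ together with the fact that every $G$-chief factor in $N$ has order $q^{d}$ (Lemma \ref{lem: size of chief factor in Frob ker}) to force the contradiction $|\overline{N}'|<q^{d}$. With the case split re-anchored on the structure of $N/N'$ and these two arguments supplied, your outline becomes the paper's proof.
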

\begin{proof}
	Assume that $|\mathrm{cod}(G)|=4$.
	By Lemma \ref{lem: height=2, |cod(G/N)|=2, C=1, Frob},
	$P\cong \mathsf{C}_{p}$ acts Frobeniusly on the abelian $q$-group
    $N/N'$.
	If $\mathbf{C}_{N}(P)>1$,
	then Proposition \ref{prop: height=2, |cod(G/N)|=2, C=1, not Frob} yields part (2).
    Suppose that $\mathbf{C}_{N}(P)=1$.
	Then $G$ is a Frobenius group with complement $P\cong \mathsf{C}_{p}$ and kernel $N=G'$ a $q$-group.
	In particular, every element in $\mathrm{cod}(G|G')$ is a power of $q$.
    If $N$ is abelian,
	then Lemma \ref{lem: height=2, |cod(G/N)|=2, C=1, Frob} gives either (1a) or (1b).
	Assume now that $N$ is nonabelian and $N/N'$ is a homogeneous $P$-module over $\mathbb{F}_q$.
	Then, by Lemma \ref{lem: size of chief factor in Frob ker}, every $G$-chief factor in $N$ has order $q^{d}$ where $d$ is the multiplicative order of $q$ modulo $p$.
	As $\mathrm{cod}(G/N')=\{ 1,p,q^{d} \}$ by \cite[Corollary B]{qian2023} and $N'\leq G'$, 
	it follows that $\mathrm{cod}(G|N')=\{ q^{k} \}$ for some positive integer $k$;
	so, $\mathrm{cod}(\theta^{G})=|N:\ker(\theta^G)|/\theta(1)=q^{k}$ for each nonlinear $\theta\in \mathrm{Irr}(N)$.
	Next, we show that $k>d$.
    Assume not. 
	Set $\overline{G}=G/\ker(\theta^{G})$.
	Then $|\overline{G}|<\mathrm{cod}(\theta^{G})^{2}=q^{2k}\leq q^{2d}$.
	Note that $\overline{N}$ is nonabelian.
    As $|\overline{N}|<|\overline{G}|\leq q^{2d}$ and $|\overline{N}:\overline{N}'|\geq q^{d}$,
    we conclude a contradiction that 
	$|\overline{N}'|<q^{d}$.

	So, by Lemmas \ref{lem: height=2, |cod(G/N)|=2, C=1, Frob} and \ref{lem: size of chief factor in Frob ker},
    we may assume that $G$ is a Frobenius group with kernel $N$ of nilpotency class $2$ and complement $P\cong \mathsf{C}_{p}$,
	such that
	$G/N'$ satisfies either (1a) or (1b) and $\mathrm{cod}(G/N')=\{ 1,p,q^{d},q^{2d} \}$ where $q^{d}$ is the order of 
	a $G$-chief factor in $N$ and $d$ is the multiplicative order of $q$ modulo 
	$p$.
	Let $\theta\in \mathrm{Irr}(N|N')$ and $\chi\in \mathrm{Irr}(G|\theta)$.
	Then $\chi=\theta^{G}$, $\mathrm{cod}(\chi)\in \{ q^{d},q^{2d} \}$ and so $\ker(\chi)< N$.
	Set $\overline{G}=G/\ker(\chi)$.
	Then $|\overline{N}|<|\overline{G}|<\mathrm{cod}(\chi)^{2}\leq q^{4d}$.
	Note that $\overline{N}$ is nonabelian as $\theta\in \mathrm{Irr}(\overline{N}|\overline{N}')$,
	and hence $|\overline{N}|\in \{ q^{2d},q^{3d} \}$.

    If $|\overline{N}|=q^{2d}$,
	then $\overline{N}$ is a special $q$-group with $|\mathbf{Z}(\overline{N})|=q^{d}$.
	Since $q^{2}\leq \theta(1)^{2}\leq |\overline{N}: \mathbf{Z}(\overline{N})|=q^{d}$,
    we have $q^{d}<\mathrm{cod}(\chi)<q^{2d}$, a contradiction.
	Hence $|\overline{N}|=q^{3d}$.
	As $|\overline{N}|/\theta(1)=\mathrm{cod}(\chi)\in \{ q^{d}, q^{2d} \}$ 
	and $p|\overline{N}|=|\overline{G}|<\mathrm{cod}(\chi)^{2}$,
	we deduce that $\mathrm{cod}(\chi)=q^{2d}$ and $\theta(1)=q^{d}$.
	Note that $\theta(1)^{2}\leq |\overline{N}:\mathbf{Z}(\overline{N})|$,
	and so $\theta(1)=|\overline{N}:\mathbf{Z}(\overline{N})|^{\frac{1}{2}}$ for all $\theta\in \mathrm{Irr}(\overline{N}|\overline{N}')$.
    Since $c(N)=2$, we have $c(\overline{N})=2$, and hence $\overline{N}'=\mathbf{Z}(\overline{N})$.  
    Therefore, by Lemma~\ref{lem: semi-extra}, $\overline{N}$ is an ultraspecial $q$-group of order $q^{3d}$.

	Conversely, assume that either (1) or (2) holds.
	By our assumptions, we know that
	$G'=N$ and $\mathrm{cod}(G/G')=\{ 1,p \}$.
    So, it suffices to show that $|\mathrm{cod}(G|G')|=2$ and $\mathrm{cod}(G|G')\cap \mathrm{cod}(G/G')=\varnothing$.
    Denote by $d$ the multiplicative order of $q$ modulo $p$.
    As $G/N'$ is a Frobenius group with complement of order $p$ and abelian kernel $N/N'$,
	every $G$-chief factor in $N/N'$ has order $q^{d}$.
    If (1a) holds, then, by Lemma \ref{lem: frob, abel ker, general case, cod}, $\mathrm{cod}(G|G')=\{q^{d}, q^{2d} \}$.
	If (2) holds,
    then we are done by Proposition \ref{prop: height=2, |cod(G/N)|=2, C=1, not Frob}.

	Suppose that (1b) holds.
	Then $G'$ is an elementary abelian $q$-group.
	Let $\mathcal{S}_P(G')=\{ V,W \}$, and write $G'=A\times B$ where $A=V(G')$ and $B=W(G')$ (see the paragraph proceeding Lemma \ref{lem: P/C acts fpf on N/N'}).
	Then $|V|=|W|=q^{d}$.
	So, Lemma \ref{lem: large orbit} forces $\{ q^{d},q^{2d} \}\subseteq \mathrm{cod}(G|G')$.
	Let $\alpha\in \mathrm{Irr}(A)$ and $\beta\in \mathrm{Irr}(B)$ be such that $\alpha\times \beta\neq 1_{G'}$.
    If either $\alpha=1_A$ or $\beta=1_B$, then $\mathrm{cod}((\alpha\times \beta)^{G})=q^{d}$ by Lemma \ref{lem: frob, abel ker, general case, cod}.
	Assume now that $\alpha\neq 1_A$ and $\beta\neq 1_B$.
	Then $\mathrm{cod}(\alpha^G)=|A:\ker(\alpha^G)|=q^{d}=|B:\ker(\beta^{G})|=\mathrm{cod}(\beta^{G})$ by Lemma \ref{lem: frob, abel ker, general case, cod}.
    Note that $\ker(\alpha^G)\times \ker(\beta^{G})\leq \ker((\alpha\times \beta)^{G})$,
	and so $\mathrm{cod}((\alpha\times \beta)^{G})=|AB:\ker((\alpha\times \beta)^{G})|$ divides $|A:\ker(\alpha^G)|\cdot |B:\ker(\beta^{G})|=q^{2d}$.
    Since $\ker((\alpha\times \beta)^{G})\unlhd G$ and every $G$-invariant subgroup of $G'$ has order
	a power of $q^{d}$, $\mathrm{cod}((\alpha\times \beta)^{G})$ is a nontrivial power of $q^{d}$.
	Therefore, $\mathrm{cod}((\alpha\times \beta)^{G})\in \{ q^{d},q^{2d} \}$.
	As a consequence, $\mathrm{cod}(G|G')=\{q^{d}, q^{2d} \}$.

	Suppose that (1c) holds. 
	Then $\mathrm{cod}(G/N'|G'/N')=\{ q^{d},q^{2d} \}$.
	Let $\theta$ be a nonlinear character in $\mathrm{Irr}(N)$.
	As $G$ is a Frobenius group with kernel $N$, $\theta^{G}\in \mathrm{Irr}(G|N')$.
    Note that $N/\ker(\theta^G)$ is an ultraspecial $q$-group of order $q^{3d}$,
    and so $\theta$, as an irreducible character of $N/\ker(\theta^{G})$,
	has degree $q^{d}$ by Lemma \ref{lem: semi-extra}.
	Therefore, $\mathrm{cod}(\theta^{G})=|N/\ker(\theta^{G})|/\theta(1)=q^{2d}$.
	As a consequence, $\mathrm{cod}(G|G')=\{ q^{d},q^{2d} \}$.

	Suppose that (1d) holds.
	Then $\mathrm{cod}(G/N'|G'/N')=\{ q^{d} \}$ by \cite[Theorem A]{qian2023}.
	Let $\theta$ be a nonlinear character in $\mathrm{Irr}(N)$.
	As $G$ is a Frobenius group with kernel $N$, $\theta^{G}\in \mathrm{Irr}(G|N')$.
	Therefore, $\mathrm{cod}(\theta^{G})=|N/\ker(\theta^{G})|/\theta(1)=q^{k}$.
    Consequently, we conclude that $\mathrm{cod}(G|G')=\{ q^{d},q^{k} \}$ where $k>d$.	
\end{proof}

\begin{cor}\label{cor: height=2, |cod(G/N)|=2}
	Assume Hypothesis \ref{hy: height=2, |cod(G/N)|=2}.
	If $|\mathrm{cod}(G)|=4$, then $n_p\leq p$ for each $n\in \mathrm{cod}(G)$.
\end{cor}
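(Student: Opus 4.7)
The plan is to invoke the two structural results we have already proved for Hypothesis \ref{hy: height=2, |cod(G/N)|=2}: Proposition \ref{prop: height=2, |cod(G/N)|=2, C>1} in the case $C>1$ and Theorem \ref{thm: height=2, |cod(G/N)|=2, classification} in the case $C=1$. Since $|\mathrm{cod}(G)|=4$ forces $|\mathrm{cod}(G/N)|=2$ and hence $|\mathrm{cod}(P)|=2$, Lemma \ref{lem: |codG|=2} tells us $P$ is elementary abelian, so every codegree of a character of $G/N$ already has $p$-part at most $p$. It remains only to inspect the codegrees coming from $\mathrm{Irr}(G|N)$ in each configuration produced by the classification.

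First, I would handle the easier case $C>1$. Proposition \ref{prop: height=2, |cod(G/N)|=2, C>1} says that $\mathrm{cod}(G)=\{1,p,q^{d},pq^{d}\}$ for some prime $q\neq p$ and some positive integer $d$, so the $p$-part of each element of $\mathrm{cod}(G)$ is either $1$ or $p$, and the claim is immediate.

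Next, I would reduce to the case $C=1$ and split further according to the five outcomes of Theorem \ref{thm: height=2, |cod(G/N)|=2, classification}. In outcomes (1a)--(1d), $G$ is a Frobenius group with complement $P\cong\mathsf{C}_p$ and kernel $N$ a $q$-group; every character in $\mathrm{Irr}(G|N)$ is induced from a character of $N$, so its codegree is a power of $q$ and therefore has trivial $p$-part. Outcomes (1a), (1b), (1c) each give $\mathrm{cod}(G)=\{1,p,q^{d},q^{2d}\}$, while (1d) gives $\mathrm{cod}(G)=\{1,p,q^{d},q^{k}\}$ with $k>d$; in every instance, the non-trivial codegrees outside $\{1,p\}$ are pure $q$-powers. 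In outcome (2), Proposition \ref{prop: height=2, |cod(G/N)|=2, C=1, not Frob} yields $\mathrm{cod}(G)=\{1,p,q^{d},pq\sqrt{|N:N'|}\}$; since $\sqrt{|N:N'|}$ is a power of $q$ and $q\neq p$, the $p$-part of $pq\sqrt{|N:N'|}$ is exactly $p$.

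No genuine obstacle arises here: the corollary is essentially a bookkeeping consequence of the classification and the fact that $P$ is elementary abelian. The only point to take care of is remembering to invoke Proposition \ref{prop: height=2, |cod(G/N)|=2, C>1} separately, since Theorem \ref{thm: height=2, |cod(G/N)|=2, classification} is stated under the additional assumption $C=1$; once both branches are handled, direct inspection of the explicit codegree sets closes the argument.
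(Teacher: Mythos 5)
Your proposal is correct and follows essentially the same route as the paper: split into the case $C>1$ (handled by Proposition \ref{prop: height=2, |cod(G/N)|=2, C>1}) and the case $C=1$ (handled by Theorem \ref{thm: height=2, |cod(G/N)|=2, classification}, where the Frobenius outcomes give $q$-power codegrees on $\mathrm{Irr}(G|N)$ and outcome (2) is settled by Proposition \ref{prop: height=2, |cod(G/N)|=2, C=1, not Frob}). The only nitpick is that $|\mathrm{cod}(G/N)|=2$ is part of Hypothesis \ref{hy: height=2, |cod(G/N)|=2} rather than a consequence of $|\mathrm{cod}(G)|=4$, but this does not affect the argument.
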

\begin{proof}
	If $G$ is a Frobenius group with complement $P\cong \mathsf{C}_{p}$, then we are done.
	If $C>1$, then we are done by Proposition \ref{prop: height=2, |cod(G/N)|=2, C>1}.
	So, 
	we may assume that $G$ satisfies part (2) of Theorem \ref{thm: height=2, |cod(G/N)|=2, classification}.
	Consequently, we are done by Proposition \ref{prop: height=2, |cod(G/N)|=2, C=1, not Frob}.
\end{proof}

\section{Solvable groups with Fitting height 3}

In this section, we classify the finite solvable groups $G$ with Fitting height 3 such that $|\mathrm{cod}(G)|=4$.

\begin{hy}\label{hy: height=3}
	Let $G$ be a solvable group with Fitting height $3$, $K$ the nilpotent residual of $G$ and 
	$V$ the nilpotent residual of $K$.
\end{hy}

\begin{lem}\label{lem: height=3, induction}
	Assume Hypothesis \ref{hy: height=3} and that $|\mathrm{cod}(G)|=4$.
	Then $|\mathrm{cod}(G/D)|=4$ for each proper $G$-invariant subgroup $D$ of $V$.
\end{lem}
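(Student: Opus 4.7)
The plan is to sandwich $|\mathrm{cod}(G/D)|$ between $4$ and $4$. The upper bound is immediate: since $D\unlhd G$, Lemma \ref{lem: basic facts on codegree}(1) implies that inflation preserves codegrees, so $\mathrm{cod}(G/D)\subseteq\mathrm{cod}(G)$ and consequently $|\mathrm{cod}(G/D)|\le 4$.

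For the lower bound, I would invoke Lemma \ref{lem: qz}, which asserts $h(G/D)\le|\mathrm{cod}(G/D)|-1$. It therefore suffices to verify that $h(G/D)=3$. This is the heart of the argument and rests on the classical functoriality of the nilpotent residual: for every normal subgroup $M\unlhd H$ of a (finite) group, $\gamma_\infty(H/M)=\gamma_\infty(H)M/M$. Applying this successively to $(G,D)$ and to $(K,D)$, and using the inclusions $D\le V\le K$ coming from Hypothesis \ref{hy: height=3}, one obtains
\[
\gamma_\infty(G/D)=KD/D=K/D,\qquad \gamma_\infty(K/D)=VD/D=V/D.
\]
Because $D$ is a \emph{proper} $G$-invariant subgroup of $V$, the quotient $V/D$ is nontrivial, so $K/D$ has nontrivial nilpotent residual and is thus non-nilpotent. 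Consequently the nilpotent residual $K/D$ of $G/D$ is non-nilpotent, which forces $h(G/D)\ge 3$. Since quotients of solvable groups never increase Fitting height, $h(G/D)\le h(G)=3$, and equality holds.

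Combining both bounds yields $|\mathrm{cod}(G/D)|=4$, as claimed. I do not anticipate any real obstacle here: the entire argument reduces to the observation that passing from $G$ to $G/D$ preserves the Fitting height whenever $D$ is a proper $G$-invariant subgroup of $V$, and this is an immediate consequence of the functoriality of $\gamma_\infty$ together with Lemma \ref{lem: qz}.
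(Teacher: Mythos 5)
Your proof is correct, and it takes a slightly different route from the paper at the one substantive point. Both arguments share the same skeleton: the containment $\mathrm{cod}(G/D)\subseteq\mathrm{cod}(G)$ gives the upper bound, and everything then hinges on knowing $h(G/D)=h(G)=3$ (which you justify via the functoriality $\gamma_\infty(H/M)=\gamma_\infty(H)M/M$ applied to $(G,D)$ and $(K,D)$ together with $D<V$; the paper asserts this equality without spelling it out, so your verification is a welcome addition rather than a detour). Where you differ is in the lemma converting a small codegree set into a small Fitting height: the paper argues by contradiction, assuming $|\mathrm{cod}(G/D)|\le 3$ and invoking Lemma \ref{lem: |codG|=2} together with the external results of Alizadeh et al.\ (Theorem 3.4 of the 2019 paper and its corrigendum) to force $h(G/D)\le 2$, whereas you apply the Qian--Zeng bound $h(G/D)\le|\mathrm{cod}(G/D)|-1$ (Lemma \ref{lem: qz}) directly to get $|\mathrm{cod}(G/D)|\ge 4$. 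Your choice is the cleaner one here: it uses a single lemma already recorded in the paper, avoids the case split implicit in ``$|\mathrm{cod}|\le 2$ versus $|\mathrm{cod}|=3$,'' and gives the lower bound positively rather than by contradiction; the paper's route, on the other hand, leans on the finer classification of groups with at most three codegrees, which is stronger information than is needed for this lemma.
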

\begin{proof}
	Note that $|\mathrm{cod}(G/D)|\leq |\mathrm{cod}(G)|=4$, and hence it suffices to show that $|\mathrm{cod}(G/D)|\geq 4$.
	Otherwise, $|\mathrm{cod}(G/D)|\leq 3$.
	So, we conclude by Lemma \ref{lem: |codG|=2}, \cite[Theorem 3.4]{alizadeh2019} and \cite[Theorem 0.1]{alizadeh2022} that $h(G/D)\leq 2$ which contradicts $h(G/D)=h(G)=3$.
\end{proof}

\begin{lem}\label{lem: height=3 cod=4 V minimal}
	Assume Hypothesis \ref{hy: height=3} and that $V$ is minimal normal in $G$.
	If $|\mathrm{cod}(G)|=4$,
    then $G/V$ is a Frobenius group with cyclic complement of order $p$ and cyclic kernel $K/V$ of order $q$, and $K$ is a Frobenius group with cyclic complement of order $q=\frac{r^{pm}-1}{r^{m}-1}$ and kernel $V\cong (\mathsf{C}_{r})^{pm}$ such that $V$ is minimal normal in $K$. 
	In particular, $\mathrm{cod}(G/V)=\{1,p,q \}$ and $\mathrm{cod}(G)=\{ 1,p,q,pr^{pm} \}$.
\end{lem}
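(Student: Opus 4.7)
The overall strategy is to first use the codegree classifications from Sections~2 and 3 to pin down the structure of $G/V$, and then exploit the minimality of $V$ in $G$ via character-theoretic calculations to extract the structure of $K$ and the arithmetic identity on $q$. I first show $|\mathrm{cod}(G/V)|=3$. By Lemma~\ref{lem: qz} and $h(G/V)=2$, $|\mathrm{cod}(G/V)| \geq 3$; trivially $|\mathrm{cod}(G/V)| \leq |\mathrm{cod}(G)|=4$. Suppose for contradiction $|\mathrm{cod}(G/V)|=4$; then $\mathrm{cod}(G)=\mathrm{cod}(G/V)$, and Lemma~\ref{lem: kernel, abelian, cod} forces $|V|=r^n$ to divide some codegree of $G/V$. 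Since by Lemma~\ref{lem: |cod(G)|<=4}(2) the nilpotent residual of $G/V$ is a $q$-group, one has $r \in \{p,q\}$. The case $r=q$ makes $K$ a nilpotent $q$-group, forcing $V=K^{(\infty)}=1$, contradiction. For $r=p$, Corollaries~\ref{cor: height=2, |cod(G/N)|=3} and \ref{cor: height=2, |cod(G/N)|=2} bound the $p$-parts of codegrees of $G/V$ by $p^2$, and a case-by-case inspection against Theorems~\ref{thm: height=2, |cod(G/N)|=3, classification} and \ref{thm: height=2, |cod(G/N)|=2, classification} shows no structure is compatible with $V$ being minimal normal in $G$.

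With $|\mathrm{cod}(G/V)|=3$, the classification of nonnilpotent groups with three codegrees (Corollary~B of \cite{qian2023}) gives that $G/V$ is a Frobenius group with cyclic complement $P/V \cong \mathsf{C}_p$ of prime order and elementary abelian kernel $K/V$ that is a homogeneous $P$-module over $\mathbb{F}_q$; thus $\mathrm{cod}(G/V)=\{1,p,q^d\}$ with $d$ the multiplicative order of $q$ modulo $p$. Since $\gcd(|V|,|G/V|)=1$, Schur-Zassenhaus gives $G=V \rtimes H$ with $H \cong G/V = Q \rtimes P'$. As $V$ is not central in $K$ (else $V=1$), coprime action yields $\mathbf{C}_V(K)=0$, so $\bar Q:=Q/\mathbf{C}_Q(V)$ acts faithfully on $V$. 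Applying Clifford's theorem together with the fact that a faithful irreducible $\mathbb{F}_r$-representation of an abelian group forces that group to be cyclic, we obtain $\bar Q \cong \mathsf{C}_q$; since $\bar Q$ must be a $G$-chief factor of the homogeneous module $K/V$ (whose chief factors have order $q^d$), we deduce $d=1$. Computing the codegrees of $\chi \in \mathrm{Irr}(G|V)$ via Clifford's correspondence, one finds that sub-case~(b) of Lemma~\ref{lem: dim of faithful module} (where $V$ is not $\bar Q$-irreducible) produces two distinct codegrees differing by the factor $p$, contradicting $|\mathrm{cod}(G)|=4$; hence $V$ is $\bar Q$-irreducible, so $V$ is minimal normal in $K$. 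A parallel argument on the multiplicity $t$ of the irreducible $P$-submodule in $K/V$ rules out $t>1$, yielding $|K/V|=q$, and $K=V \rtimes Q$ is Frobenius with kernel $V$.

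Finally, since $V$ is $Q$-irreducible over $\mathbb{F}_r$ of dimension $d'$ (the multiplicative order of $r$ modulo $q$), and $P'$ acts non-trivially on $Q$, the $P'$-action on $V$ is necessarily by an order-$p$ Galois automorphism of $\mathbb{F}_{r^{d'}}/\mathbb{F}_r$; this forces $p \mid d'$, so $d'=pm$ for some $m$, with $P'$ acting on $V \cong \mathbb{F}_{r^{pm}}$ as $x \mapsto x^{r^m}$. For $\lambda \in \mathrm{Irr}(V)^\sharp$, the stabilizer $\I_H(\lambda)$ has trivial intersection with $Q$ (by the Frobenius structure of $K$), and the induced $\chi \in \mathrm{Irr}(G|\lambda)$ has codegree $r^{pm}$ or $pr^{pm}$ according to whether $\I_H(\lambda)$ is trivial or a conjugate of $P'$. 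For $|\mathrm{cod}(G)|=4$ only one of these codegrees may arise, and arguing as above it must be $pr^{pm}$, so $P\lambda \subseteq Q\lambda$ for every $\lambda \in \mathrm{Irr}(V)^\sharp$. Identifying $Q$ with the order-$q$ subgroup of $\mathbb{F}_{r^{pm}}^*$, this condition becomes $\lambda^{r^m-1} \in Q$ for all $\lambda \in V^\sharp$; the image subgroup $(\mathbb{F}_{r^{pm}}^*)^{r^m-1}$ has order $(r^{pm}-1)/(r^m-1)$, and its inclusion in the prime-order $Q$ forces $q=(r^{pm}-1)/(r^m-1)$. Consequently, $\mathrm{cod}(G)=\{1,p,q,pr^{pm}\}$. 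The principal obstacle I anticipate is the case analysis of Step~1 that rules out $|\mathrm{cod}(G/V)|=4$, which requires sifting through every possibility in the Section~3 classification.
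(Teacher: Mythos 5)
Two steps of your argument are genuinely broken. First, the coprimality claim $\gcd(|V|,|G/V|)=1$ is false: nothing prevents $r=p$, and the smallest instance of this lemma, $G=\mathsf{S}_4$ (where $V\cong(\mathsf{C}_2)^2$ and $G/V\cong\mathsf{S}_3$, so $r=p=2$), already violates it. The splitting $G=V\rtimes H$ must instead be obtained, as in the paper, from the Frattini argument applied to $Q\in\mathrm{Syl}_q(K)$ together with the minimality of $V$ (which gives $\mathbf{C}_V(Q)=1$), and you may not afterwards treat the action of $H$ (in particular of $P$) on $V$ as coprime. Second, and more seriously, your derivation of $Q/\mathbf{C}_Q(V)\cong\mathsf{C}_q$ from ``Clifford plus faithful irreducible representations of abelian groups are cyclic'' is invalid: $Q$ is not normal in $G$, and Clifford theory applied to the relevant normal subgroup only gives that $V$ restricted to $\overline{Q}$ is completely reducible with conjugate irreducible constituents; since the restriction need not be homogeneous, the constituents can have distinct kernels, so one only gets an embedding of $\overline{Q}$ into a direct product of cyclic groups. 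Indeed a noncyclic elementary abelian Frobenius kernel $\mathsf{C}_q\times\mathsf{C}_q$ with a complement of odd prime order acting by two distinct eigenvalues has its socle generated by a single conjugacy class, hence admits faithful irreducible modules over suitable $\mathbb{F}_r$; cyclicity of $\overline{Q}$ genuinely requires the codegree hypothesis. In the paper it is extracted only after showing that $\mathrm{cod}$ is constant on $\mathrm{Irr}(G|V)$ and that every stabilizer $\mathrm{I}_{\overline{H}}(\lambda)$, $\lambda\in\mathrm{Irr}(\overline{V})^\sharp$, has order exactly $p$ (combining $|\mathbf{C}_{\mathrm{Irr}(\overline{V})}(\overline{P})|=r^m$ from \cite[Theorem 15.16]{isaacs1994} with a character $\eta$ satisfying $\mathrm{I}_{\overline{Q}}(\eta)=1$ from \cite[\S 19, Lemma 19.16]{huppertcharactertheory}), whence $\overline{Q}$ acts Frobeniusly on $\mathrm{Irr}(\overline{V})$ and, being an elementary abelian Frobenius complement, is cyclic of order $q$. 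Since your later steps ($d=1$, the use of Lemma \ref{lem: dim of faithful module}, which presupposes a Frobenius group of order $pq$, and the semilinear description of the $P$-action on $\mathbb{F}_{r^{pm}}$) all rest on $\overline{Q}\cong\mathsf{C}_q$ and $|K/V|=q$, the gap propagates through the remainder.

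Beyond these two errors, the pivotal reductions are asserted rather than proved: the exclusion of $|\mathrm{cod}(G/V)|=4$ in your Step 1 (which you yourself flag as the main obstacle), the claim that part (2) of Lemma \ref{lem: dim of faithful module} yields two codegrees differing by a factor $p$, and the ``parallel argument'' giving $|K/V|=q$. For the first, the paper's contradiction is not a structural incompatibility with the classifications of Theorems \ref{thm: height=2, |cod(G/N)|=3, classification} and \ref{thm: height=2, |cod(G/N)|=2, classification}: one first gets $r=p$ and $|V|\ge p^2$, then Corollary \ref{cor: height=2, |cod(G/N)|=2} disposes of the case $|\mathrm{cod}(H/Q)|=2$, while Corollary \ref{cor: height=2, |cod(G/N)|=3} forces $\mathrm{cod}(\chi)=|V|$ for all $\chi\in\mathrm{Irr}(G|V)$, making $G/\mathbf{C}_H(V)$ a Frobenius group with complement $H/\mathbf{C}_H(V)$, necessarily a $p'$-group and hence a $q$-group, contradicting $Q\not\le\mathbf{C}_H(V)$. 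You also never address $C=\mathbf{C}_H(V)$: a priori $C>1$, so your analysis only concerns $K/V\mathbf{C}_K(V)$; proving $C=1$ (so that $|K/V|=q$ and $K$ itself is Frobenius on $V$) is a separate step, done in the paper at the very end via Lemma \ref{lem: kernel, abelian, cod} against the already-determined set $\mathrm{cod}(G)=\{1,p,q,pr^{pm}\}$. On the positive side, your closing identity $q=(r^{pm}-1)/(r^m-1)$ via the map $x\mapsto x^{r^m-1}$ is a clean alternative to the paper's orbit count, but it sits downstream of the unestablished structure.
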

\begin{proof}
	As $|\mathrm{cod}(G/V)|\leq |\mathrm{cod}(G)|\leq  4$ and $h(G/V)=2$,
	it follows by Lemma \ref{lem: |cod(G)|<=4} that $K/V$ is a $q$-group and $G/K$ is a $p$-group of nilpotency class at most 2.
	Let $Q\in \mathrm{Syl}_{q}(K)$.
    Then $K=V \rtimes Q$ because the abelian group $V$ is the nilpotent residual of $K$.	
	By the Frattini's argument, $G=V\mathbf{N}_{G}(Q)$ where $V\cap \mathbf{N}_{G}(Q)=\mathbf{C}_{V}(Q)$.
    The minimality of $V$ then implies $\mathbf{C}_{V}(Q)=1$ (otherwise $K$ would be nilpotent, contradicting $h(K)=2$).
    Hence $G=V \rtimes H$ where $H:=\mathbf{N}_{G}(Q)$ is a maximal subgroup of $G$,
	and so $H=Q \rtimes P$ where $P\in \mathrm{Syl}_{p}(H)$.
    Since $Q$ is the nilpotent residual of $H$ and $\mathbf{C}_{H}(V)<H$,
    $H/\mathbf{C}_{H}(V)$ is not a $q$-group.
	Moreover, Lemma \ref{lem: kernel, abelian, cod} shows that $r^{n}\mid \mathrm{cod}(\chi)$ for each $\chi\in \mathrm{Irr}(G|V)$ where $r^{n}=|V|$.

    We claim next that $|\mathrm{cod}(H)|=3$.
    Assume not.
	As $\mathrm{cod}(H)=\mathrm{cod}(G/V)\subseteq \mathrm{cod}(G)$ and $|\mathrm{cod}(G)|=4$,
	we have $\mathrm{cod}(H)=\mathrm{cod}(G)$.
    Note that $r^{n}\mid a$ for some $a\in \mathrm{cod}(G)= \mathrm{cod}(H)$ 
	and that
	$r\neq q$,
    and so $r=p$ and $G$ is a $\{ p,q \}$-group.	
    Recall that $H/\mathbf{C}_{H}(V)$ is not a $q$-group,
	and so $|V|\geq p^{2}$.
	In fact, otherwise we conclude a contradiction that $p\nmid |H/\mathbf{C}_{H}(V)|$.
	If $|\mathrm{cod}(H/Q)|=2$, as $p^{2}\mid a$ for each $a \in \mathrm{cod}(G|V)\subseteq \mathrm{cod}(H)$,
    we conclude a contradiction by Corollary \ref{cor: height=2, |cod(G/N)|=2}.
	Note that $P>1$, and hence we may assume that $|\mathrm{cod}(H/Q)|> 2$.
    Now, we show that $|\mathrm{cod}(H/Q)|=3$.
	Indeed, otherwise $\mathrm{cod}(H/Q)=\mathrm{cod}(H)$;
	however, $q\mid a$ for some $a\in \mathrm{cod}(H)$ by Lemma \ref{lem: kernel, abelian, cod},
	a contradiction.
	Therefore, $H$ satisfies Hypothesis \ref{hy: height=2, |cod(G/N)|=3} and $\mathrm{cod}(H)=\mathrm{cod}(G)$.
    Recall that $|V|\mid \mathrm{cod}(\chi)\in \mathrm{cod}(G)=\mathrm{cod}(H)$ for each $\chi \in \mathrm{Irr}(G|V)$,
	and so Corollary \ref{cor: height=2, |cod(G/N)|=3} implies that 
	$\mathrm{cod}(\chi)=|V|$.
	Consider now the quotient group $\overline{G}:=G/\mathbf{C}_{H}(V)$.
    Note that $\overline{G}=\overline{V} \rtimes \overline{H}$ where $\overline{V}$
	is the unique minimal normal subgroup of $\overline{G}$,
	and so $\ker(\chi)=1$ for each $\chi\in \mathrm{Irr}(\overline{G}|\overline{V})$.
   As $\mathrm{Irr}(\overline{G}|\overline{V})\subseteq \mathrm{Irr}(G|V)$,
   we deduce that $\mathrm{cod}(\chi)=|V|=|\overline{V}|$ for each $\chi\in \mathrm{Irr}(\overline{G}|\overline{V})$.
  So, it follows that $\chi(1)=|\overline{H}|$ for each $\chi\in \mathrm{Irr}(\overline{G}|\overline{V})$.
   Therefore, $\overline{G}$ is a Frobenius group with complement $\overline{H}$ and kernel $\overline{V}$.
   Consequently, $H/\mathbf{C}_{H}(V)=\overline{H}$ is a $q$-group,
    a contradiction.

	Since $|\mathrm{cod}(H)|=3$ and $h(H)=2$,
	\cite[Corollary B]{qian2023} yields that $H$ is a Frobenius group with complement $P\cong \mathsf{C}_{p}$
	and elementary abelian kernel $Q$ such that $Q$ is a homogeneous $P$-module over $\mathbb{F}_q$.
	In particular, $\mathrm{cod}(G/V)=\mathrm{cod}(H)=\{ 1,p,q^{d} \}$ where $q^{d}$ is the order of a $G$-chief factor in $K/V$.
    Set $C=\mathbf{C}_{H}(V)$.
	Then $C\unlhd G$.
	As $C$ is a proper normal subgroup of $H$, it follows that $C<Q$.
    Set $\overline{G}=G/C$.
    Then $\overline{H}$ is a Frobenius group with complement $\overline{P}\cong \mathsf{C}_{p}$
	and elementary abelian kernel $\overline{Q}$,
	and $\overline{V}$ is the unique minimal normal subgroup of $\overline{G}$.
    As $\mathbf{C}_{\overline{V}}(\overline{Q})=1$,
    it follows by \cite[Theorem 15.16]{isaacs1994} that $|\mathbf{C}_{\mathrm{Irr}(\overline{V})}(\overline{P})|=r^{m}$
	where $r^{pm}=r^{n}=|\overline{V}|$ and $m\geq 1$.
Let $\lambda\in \mathrm{Irr}(\overline{V})^\sharp$ and set $T=\mathrm{I}_{\overline{G}}(\lambda)$.
	As $\overline{G}=\overline{V} \rtimes \overline{H}$,
	the linear character $\lambda$ extends to some $\hat{\lambda}\in \mathrm{Irr}(T)$, 
	and so $\mathrm{Irr}(\overline{G}|\lambda)=\{ (\hat{\lambda} \alpha)^{\overline{G}}: \alpha \in \mathrm{Irr}(T/\overline{V}) \}$ by Gallagher's theorem and Clifford's theorem.
	Note that $(\hat{\lambda} \alpha)^{\overline{G}}$ has a trivial kernel, and so
	\[
		\mathrm{cod}((\hat{\lambda} \alpha)^{\overline{G}})=\frac{|\overline{G}|}{(\hat{\lambda} \alpha)^{\overline{G}}(1)}=\frac{|T|}{\alpha(1)}=|\overline{V}|\cdot \frac{|T/\overline{V}|}{\alpha(1)}=r^{n}\cdot \frac{|T/\overline{V}|}{\alpha(1)}.
	\] 
    Recalling that $|\mathrm{cod}(G)|=4$, $\mathrm{cod}(G/V)=\{ 1,p,q^{d} \}$ and that $n=pm\geq p$,
	we deduce that $\mathrm{cod}((\hat{\lambda} \alpha)^{\overline{G}})\notin \mathrm{cod}(G/V)$.
	Consequently, $\mathrm{cod}((\hat{\lambda} \alpha)^{\overline{G}})$
    is a constant for each $\lambda\in \mathrm{Irr}(V)^\sharp$ and each $\alpha\in \mathrm{Irr}(T/\overline{V})$.
	Therefore, $\alpha(1)=1$ for each $\alpha \in \mathrm{Irr}(T/\overline{V})$,
	and $T/\overline{V}$ is abelian.
	As $|\mathbf{C}_{\mathrm{Irr}(\overline{V})}(\overline{P})|=r^{m}>1$,
	$|\mathrm{I}_{\overline{G}}(\lambda)/\overline{V}|_p=|T/\overline{V}|_p=p$ for each $\lambda\in \mathrm{Irr}(\overline{V})^\sharp$.
    Since the elementary abelian $q$-group $\overline{Q}$ acts faithfully and completely reducibly on $\overline{V}$,
	it also acts faithfully and completely reducibly on $\mathrm{Irr}(\overline{V})$ by \cite[Lemma 1]{zhang2000}.
	So, an application of \cite[\S 19, Lemma 19.16]{huppertcharactertheory} yields that there exists an $\eta\in \mathrm{Irr}(\overline{V})^\sharp$ such that $\mathrm{I}_{\overline{Q}}(\eta)=1$.
    Therefore, $\mathrm{I}_{\overline{Q}}(\lambda)=1$ for every $\lambda\in \mathrm{Irr}(\overline{V})^\sharp$.
	As a consequence,
	$\overline{K}$ is a Frobenius group with cyclic complement $\overline{Q}\cong \mathsf{C}_{q}$ and elementary abelian kernel $\overline{V}$, and $\mathrm{cod}(G)=\{ 1,p,q,pr^{pm} \}$.
	 For each $\lambda\in \mathrm{Irr}(\overline{V})^\sharp$,
	since $|\mathrm{I}_{\overline{G}}(\lambda)/\overline{V}|=p$ and $\mathrm{I}_{\overline{H}}(\lambda)\in \mathrm{Syl}_{p}(\overline{H})$,
	we have
	$$r^{pm}-1=|\mathrm{Irr}(\overline{V})|-1=|\mathrm{Syl}_p(\overline{H})|\cdot 
	(|\mathbf{C}_{\mathrm{Irr}(\overline{V})}(P)|-1)=q(r^m-1),$$
    which gives $q=\frac{r^{pm}-1}{r^{m}-1}$.
    An application of Lemma \ref{lem: 2-Frobenius} to $\overline{G}$ now shows that $\overline{V}$
	is minimal normal in $\overline{K}$.

	Finally, we show $C=1$. 
    Assume not.
    Then $C$ is a nontrivial normal subgroup of the Frobenius group $H$ 
	(with complement $P\cong \mathsf{C}_{p}$ and elementary abelian kernel $Q$).
    Note that $K=V \rtimes Q$ has Fitting height 2, 
	and so $C$ is a proper subgroup of the elementary abelian $q$-group $Q$.
    Applying Lemma \ref{lem: kernel, abelian, cod},
	we deduce that $qr^{pm}=q|V|\mid a$ for some $a\in \mathrm{cod}(G)$
	which contradicts $\mathrm{cod}(G)=\{ 1,p,q,pr^{pm} \}$.
\end{proof}

\begin{thm}\label{thm: height=3}
	Assume Hypothesis \ref{hy: height=3}.
	Then $|\mathrm{cod}(G)|=4$
	if and only if $G/V$ is a Frobenius group with complement of order $p$ and cyclic kernel $K/V$
	of order $q=\frac{r^{pm}-1}{r^m-1}$,
	and $K$ is a Frobenius group with elementary abelian kernel $V$ of order $r^{pm}$ such that $V$ is minimal normal in $K$.	
\end{thm}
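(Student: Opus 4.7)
The plan is to prove Theorem \ref{thm: height=3} by reducing the forward direction to Lemma \ref{lem: height=3 cod=4 V minimal} via induction on $|V|$, and establishing the converse by a direct character-theoretic computation.

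For the forward direction, suppose $|\mathrm{cod}(G)|=4$. We induct on $|V|$; the base case (where $V$ is minimal normal in $G$) is exactly Lemma \ref{lem: height=3 cod=4 V minimal}. For the inductive step, let $D$ be a minimal normal subgroup of $G$ with $D\lneq V$. Lemma \ref{lem: height=3, induction} yields $|\mathrm{cod}(G/D)|=4$, and one verifies that $G/D$ satisfies Hypothesis \ref{hy: height=3}: indeed $h(G/D)=3$ since $V/D\neq 1$ forces $K/D$ to be non-nilpotent, and taking quotients preserves the identification of $K/D$ and $V/D$ as the nilpotent residuals of $G/D$ and $K/D$. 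The inductive hypothesis applied to $G/D$ then shows that $V/D$ is minimal normal in $G/D$, $|V/D|=r^{pm}$, $|K/V|=q=(r^{pm}-1)/(r^m-1)$, $G/V$ is Frobenius with complement of order $p$, and $\mathrm{cod}(G/D)=\{1,p,q,pr^{pm}\}$. In particular, $D$ is the unique maximal $G$-invariant proper subgroup of $V$, and $\mathrm{cod}(G)=\{1,p,q,pr^{pm}\}$.

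The main obstacle is to derive a contradiction from $D>1$. By Lemma \ref{lem: kernel, abelian, cod}, every $\chi\in\mathrm{Irr}(G|D)$ has $|D|$ dividing $\mathrm{cod}(\chi)\in\{1,p,q,pr^{pm}\}$, forcing $\mathrm{cod}(\chi)=pr^{pm}$ and hence $D$ to be an $r$-group. The Sylow $r$-subgroup $V_r$ of $V$ is characteristic in the nilpotent $V$, hence $G$-invariant; if $V_r\lneq V$, applying the inductive hypothesis to $G/V_r$ would produce a second codegree $pr'^{pm'}$ with $r'\neq r$, contradicting $|\mathrm{cod}(G)|=4$. Hence $V$ is an $r$-group, and by uniqueness of $D$ in $V$, we also have $V'\leq D$. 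If $V$ is abelian, then $V$ has exponent at most $r^2$ (as $V/D$ and $D$ are elementary abelian); for $\theta\in\mathrm{Irr}(V)$ of order $r^2$ and $\chi\in\mathrm{Irr}(G|\theta)$, It\^o's theorem gives $\chi(1) \mid pq$, and combining with $\mathrm{cod}(\chi)=pr^{pm}$ and a normal-subgroup analysis of $\ker(\chi)V/V\leq G/V$ forces $\chi(1)=q$, $\ker(\chi)\leq V$, and $|\ker(\chi)|=|D|$; the uniqueness of $D$ then yields $\ker(\chi)=D$, a contradiction. The nonabelian case reduces via Lemma \ref{lem: height=3, induction} applied to $G/V'$: induction forces $|V/V'|=r^{pm}=|V/D|$, whence $V'=D$ so that $V$ has class $2$, and a parallel character-theoretic argument exploiting the class-$2$ structure (analogous to the final steps of the proof of Proposition \ref{prop: height=2, |cod(G/N)|=2, C=1, not Frob}) yields the contradiction.

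For the converse direction, suppose the structural description holds. Since $|V|=r^{pm}$ is coprime to $|H|=pq$, Schur--Zassenhaus gives $G=V\rtimes H$ with $H$ a Frobenius group of order $pq$, so $\mathrm{cod}(G/V)=\{1,p,q\}$. For $\lambda\in\mathrm{Irr}(V)^{\sharp}$, the Frobenius action of the complement $K/V\cong\mathsf{C}_q$ on $V$ gives $\mathrm{I}_{K/V}(\lambda)=1$, hence $|\mathrm{I}_G(\lambda)/V|\leq p$; the Sylow-$p$ orbit-counting argument in the proof of Lemma \ref{lem: height=3 cod=4 V minimal}, combined with the identity $q=(r^{pm}-1)/(r^m-1)$, then forces $|\mathrm{I}_G(\lambda)/V|=p$ for every such $\lambda$. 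Consequently, each $\chi\in\mathrm{Irr}(G|V)$ has $\chi(1)=q$; since $V$ is minimal normal in $K$ and $\lambda\neq 1$, the common kernel of the $G$-conjugates of $\lambda$ is trivial, so $\ker(\chi)=1$ and $\mathrm{cod}(\chi)=|G|/q=pr^{pm}$. Therefore $\mathrm{cod}(G)=\{1,p,q,pr^{pm}\}$ has exactly four elements.
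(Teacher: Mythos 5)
Your overall skeleton (induction on $|V|$ via Lemma \ref{lem: height=3, induction}, base case Lemma \ref{lem: height=3 cod=4 V minimal}, direct computation for the converse) is the same as the paper's, and your converse direction is essentially fine. The forward direction, however, has a genuine gap at the sentence ``In particular, $D$ is the unique maximal $G$-invariant proper subgroup of $V$.'' The inductive hypothesis only tells you that $D$ is \emph{a} maximal $G$-invariant proper subgroup of $V$ (because $V/D$ is a chief factor); nothing you say excludes a second minimal normal subgroup $E\le V$ of $G$ with $E\ne D$. If such an $E$ exists, applying induction to $G/E$ as well shows $V=D\times E$ is elementary abelian of order $r^{2pm}$ with both factors of order $r^{pm}$, and eliminating this configuration is precisely the heart of the paper's proof: when $D\not\cong E$ as $G$-modules one could still invoke Lemma \ref{lem: kernel, abelian, cod} to get a codegree divisible by $r^{2pm}$, but in the homogeneous case $D\cong E$ the paper has to work — using the monotonicity of $(x^{p}-1)/(x-1)$, a Frattini argument, and \cite[Theorem 15.16]{isaacs1994} it picks $\delta\in\mathbf{C}_{\mathrm{Irr}(D)}(P)^{\sharp}$ and $\epsilon\in\mathbf{C}_{\mathrm{Irr}(E)}(P^{x})^{\sharp}$ for suitable $x\in Q^{\sharp}$, so that $\lambda=\delta\times\epsilon$ satisfies $\mathrm{I}_{G}(\lambda)=V$ and $\lambda^{G}$ is irreducible of codegree $r^{pm}$ or $r^{2pm}$, which is not in $\{1,p,q,pr^{pm}\}$. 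Your proposal has no counterpart of this argument, and your later steps (``$V'\le D$ by uniqueness'', ``the uniqueness of $D$ then yields $\ker(\chi)=D$'') all lean on the unproved uniqueness, so the gap is not local.

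Even granting uniqueness, two further steps are soft. First, ``forcing $\mathrm{cod}(\chi)=pr^{pm}$ and hence $D$ to be an $r$-group'' is asserted, not proved: $|D|\mid\mathrm{cod}(\chi)$ leaves open $\mathrm{cod}(\chi)\in\{p,q\}$ and also $|D|=p$ with $p\ne r$; in the paper these are cheap to exclude only because uniqueness of the minimal normal subgroup gives $\ker(\chi)=1$, so the bound $|G:\ker(\chi)|\le\mathrm{cod}(\chi)^{2}$ bites. Second, your terminal case analysis is incomplete: the abelian case needs a character of $V$ of order $r^{2}$, which does not exist when $V$ is elementary abelian (possible, e.g.\ when $r=p$), and the nonabelian case is only a gesture toward ``a parallel argument.'' The paper's single closing argument — $\ker(\chi)=1$ and $\mathrm{cod}(\chi)=pr^{pm}$ give $|D|<r^{pm}$ and $\chi(1)=q|D|$, whence $\mathrm{I}_{K}(\theta)=V$, $K$ is Frobenius with kernel $V$, and Lemma \ref{lem: size of chief factor in Frob ker} forces $|D|\ge r^{pm}$ — covers all of these cases uniformly; you would need to reconstruct something of that kind to complete your route.
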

\begin{proof}
	We assume first that $|\mathrm{cod}(G)|=4$.
    Let $G$ be a counterexample of minimal possible order.
    By Lemma \ref{lem: height=3 cod=4 V minimal}, $V$ is not minimal normal in $G$.
	Let $D$ be a minimal normal subgroup of $G$ in $V$.
	Then $h(G/D)=h(G)=3$.
    As $|\mathrm{cod}(G/D)|=4$ by Lemma \ref{lem: height=3, induction},
    the minimality of $G$ then implies that
	$G/V$ is a Frobenius group with complement of order $p$ and kernel $K/V$
	of order $q=\frac{r^{pm}-1}{r^m-1}$,
	and $K/D$ is a Frobenius group with abelian kernel $V/D=\mathbf{F}(G/D)$ of order $r^{pm}$ such that $V/D$ minimal normal in $K/D$. 
    Moreover, $\mathrm{cod}(G)=\mathrm{cod}(G/D)=\{ 1,p,q,pr^{pm} \}$.
	As $\mathbf{F}(G)D/D\leq \mathbf{F}(G/D)= V/D$ and $V$ is nilpotent, it forces $\mathbf{F}(G)= V$.

	Suppose that $G$ has another minimal normal subgroup, say $E$.
	Then $E\leq \mathbf{F}(G)=V$.
    Repeating the preceding argument with $E$ in place of $D$,
    we obtain that 
	 $K/E$ is a Frobenius group with elementary abelian kernel $V/E$ of order $\ell^{pm}$ for some prime $\ell$, and $q=\frac{\ell^{pm}-1}{\ell^{m}-1}$.
	Since the function $f(x)=\frac{x^{p}-1}{x-1}$ is a strictly increasing function for $x>0$,
    the equality $\frac{\ell^{pm}-1}{\ell^{m}-1}=\frac{r^{pm}-1}{r^{m}-1}$ forces
    $\ell^{m}=r^{m}$, hence $\ell=r$.
    So, $K$ is a Frobenius group with elementary abelian kernel $V=D\times E$ of order $r^{2pm}$ and complement $Q\cong \mathsf{C}_{q}$.
	Set $H=\mathbf{N}_{G}(Q)$.
	Then the Frattini's argument implies that $G=VH$ where $V\cap H=\mathbf{C}_{V}(Q)=1$.
	So, $H\cong G/V$ is a Frobenius group with complement $P\cong \mathsf{C}_{p}$ and kernel $Q\cong \mathsf{C}_{q}$ where $q=\frac{r^{pm}-1}{r^{m}-1}$.
	Let $x$ be a nontrivial element in $Q$.
	Then $P \cap P^{x}=1$.
    As $|\mathbf{C}_{\mathrm{Irr}(D)}(P)|>1$ and $|\mathbf{C}_{\mathrm{Irr}(E)}(P^{x})|>1$ by \cite[Theorem 15.16]{isaacs1994},
	we take $\delta\in \mathbf{C}_{\mathrm{Irr}(D)}(P)^\sharp$ and $\epsilon \in \mathbf{C}_{\mathrm{Irr}(E)}(P^{x})^\sharp$ and set $\lambda=\delta\times \epsilon$.
    Then $\lambda\in \mathrm{Irr}(V)$ such that
	$\mathrm{I}_{H}(\lambda)=\mathrm{I}_{H}(\delta)\cap \mathrm{I}_{H}(\epsilon)=P\cap P^{x}=1$, and hence $\mathrm{I}_{G}(\lambda)=V$.
	Consequently, $\lambda^{G}\in \mathrm{Irr}(G)$ and
	$\ker(\lambda^{G})\leq V$.
	By calculation,
	$$\mathrm{cod}(\lambda^{G})=\frac{|G:\ker(\lambda^{G})|}{\lambda^{G}(1)}=\frac{pq\cdot |V|}{pq\cdot |\ker(\lambda^{G})|}=\frac{|V|}{|\ker(\lambda^{G})|}\in \{ r^{pm}, r^{2pm} \}.$$
    As 
	a consequence, we conclude a contradiction that $\mathrm{cod}(\lambda^{G})\notin \mathrm{cod}(G)=\{ 1,p,q,pr^{pm} \}$.

    So, $D$ is the unique minimal normal subgroup of $G$.
	It follows that $V=\mathbf{F}(G)$ is a $r$-group.
     Let $\chi\in \mathrm{Irr}(G|D)$, and observe that $\ker(\chi)\cap D=1$.
	Then $\ker(\chi)=1$.
	In particular, $\mathrm{cod}(\chi)=|G|/\chi(1)$.
	As $D$ is an abelian minimal normal subgroup of $G$,
    it follows by Lemma \ref{lem: kernel, abelian, cod} that $|D|\mid \mathrm{cod}(\chi)$.
	Recall that $\mathrm{cod}(G)=\{ 1,p,q,pr^{pm} \}$.
	If $\mathrm{cod}(\chi)=p$, 
	then $|G|<\mathrm{cod}(\chi)^{2}=p^{2}<pqr^{pm}|D|=|G|$, a contradiction.
	So, 
	we deduce that $\mathrm{cod}(\chi)=pr^{pm}$.
    Since $pqr^{pm}|D|=|G|<\mathrm{cod}(\chi)^{2}=p^{2}r^{2pm}$,
	we have that $|D|<r^{pm}\cdot \frac{p}{q}<r^{pm}=|V/D|$ where the second inequality holds as $G/V$ is a Frobenius group with complement of order $p$ and kernel of order $q$.
	Also, as
	$$\frac{pqr^{pm}\cdot |D|}{\chi(1)}=\frac{|G|}{\chi(1)}=pr^{pm},$$
    $\chi(1)=q|D|$.	
	Let $\theta$ be an irreducible constituent of $\chi_V$.
    Note that $\mathrm{I}_{K}(\theta)=V$ as $\chi(1)=q|D|$,
	and that $K/D$ is a Frobenius group with kernel $V/D$.
	So, $K$ is a Frobenius group with kernel $V$ and complement $Q\cong \mathsf{C}_{q}$.
	An application of part (1) of Lemma \ref{lem: size of chief factor in Frob ker} to $K$ yields that 
	every $K$-chief factor shares the same order with the $K$-chief factor $V/D$.
	Consequently, $|D|\geq |V/D|$, a contradiction.

Conversely, we assume that $G/V$ is a Frobenius group with complement of order $p$ and cyclic kernel $K/V$
	of order $q=\frac{r^{pm}-1}{r^m-1}$,
	and $K$ is a Frobenius group with elementary abelian kernel $V$ of order $r^{pm}$ such that $V$ is minimal normal in $K$.
	Then $G=V \rtimes H$ where $H$ is a Frobenius group with complement $P\cong \mathsf{C}_{p}$ and kernel $Q\cong \mathsf{C}_{q}$, and $VQ$ is a Frobenius group with kernel $V$ such that $V$ is minimal normal in $VQ$.
	Also, it is routine to check that 
    $\mathrm{cod}(G/V)=\{ 1,p,q \}$.
	Note that $\mathbf{C}_{\mathrm{Irr}(V)}(Q)=1$,
	and hence \cite[Theorem 15.16]{isaacs1994} yields that 
    $|\mathbf{C}_{\mathrm{Irr}(V)}(P)|^{p}=|\mathrm{Irr}(V)|=|V|=r^{pm}$.
	Hence, $|\mathbf{C}_{\mathrm{Irr}(V)}(P)|=r^{m}$.
	Noting also that, for each $x\in Q^\sharp$, 
	$$\mathbf{C}_{\mathrm{Irr}(V)}(P)\cap \mathbf{C}_{\mathrm{Irr}(V)}(P^x)=\mathbf{C}_{\mathrm{Irr}(V)}(\langle P,P^x\rangle)=\mathbf{C}_{\mathrm{Irr}(V)}(H)=1,$$ 
    we deduce that $\mathrm{Irr}(V)=\bigcup_{x\in Q} \mathbf{C}_{\mathrm{Irr}(V)}(P^{x})$ by calculating the sizes of both sets.
    In other words, $|\mathrm{I}_{H}(\lambda)|=p$ for each $\lambda\in \mathrm{Irr}(V)^\sharp$.
    As $\mathrm{I}_{G}(\lambda)/V\cong \mathrm{I}_{H}(\lambda)\cong \mathsf{C}_{p}$,
	$\lambda$ extends to $\mathrm{I}_{G}(\lambda)$ and every extension of $\lambda$ in $\mathrm{I}_{G}(\lambda)$ is linear.
	So, Clifford's correspondence yields that $\chi(1)=q$ for each $\chi\in\mathrm{Irr}(G|\lambda)$.
	As $V=\mathbf{F}(G)$ and $\ker(\chi)\cap V=1$,
	we have $\ker(\chi)=1$, and
	therefore $\mathrm{cod}(\chi)=pr^{pm}$. 
    In all, $\mathrm{cod}(G)=\{ 1,p,q,pr^{pm} \}$.
\end{proof}

\begin{rmk}
  {\rm    We present some examples of solvable groups $G$ with Fitting height 3 such that 
	$|\mathrm{cod}(G)|=4$.
	The smallest example is $G=\mathsf{S}_{4}=V \rtimes H$ where $V\cong (\mathsf{C}_{2})^{2}$ and $H=Q \rtimes P\cong \mathsf{S}_{3}$.
    There are also some examples demonstrating that $r$ is not necessary equal to $p$.
	For instance, 
	$G=V \rtimes H$ where $V\cong (\mathsf{C}_{2})^{9}$ and $H$ is a Frobenius group with complement $P\cong \mathsf{C}_{3}$ and kernel $Q\cong \mathsf{C}_{73}$ such that $V$ is a faithful irreducible $Q$-module. }
\end{rmk}

\section{Main results}

\begin{lem}\label{lem: nilpotent}
  Let $G$ be a solvable group and $N$ the nilpotent residual of $G$.
  Assume that $|\pi(G/N)|>1$.
  Then $|\mathrm{cod}(G)|=4$ if and only if $G$ is a direct product of an elementary abelian $p$-group
  and an elementary abelian $q$-group where $p\neq q$.
\end{lem}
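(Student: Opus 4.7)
The $(\Leftarrow)$ direction is immediate: if $G=P\times Q$ with $P$ an elementary abelian $p$-group and $Q$ an elementary abelian $q$-group for distinct primes $p\neq q$, then part (3) of Lemma \ref{lem: direct product, kernel and codegrees} combined with Lemma \ref{lem: |codG|=2} gives $|\mathrm{cod}(G)|=2\cdot 2=4$.

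For $(\Rightarrow)$, the plan is first to pin down $\mathrm{cod}(G)$ precisely and then to argue by contradiction that $N=1$. Since $G/N$ is nilpotent, I decompose it as a direct product of its Sylow subgroups; at least two are nontrivial by hypothesis, each contributing $|\mathrm{cod}|\ge 2$ by Lemma \ref{lem: |codG|=2}. Part (3) of Lemma \ref{lem: direct product, kernel and codegrees} then yields $|\mathrm{cod}(G/N)|\ge 4$, and since $\mathrm{cod}(G/N)\subseteq \mathrm{cod}(G)$, equality forces $G/N=\bar P\times\bar Q$ with $\bar P,\bar Q$ elementary abelian of distinct primes $p,q$, $\mathrm{cod}(G)=\{1,p,q,pq\}$, and $\pi(G)=\{p,q\}$ by part (3) of Lemma \ref{lem: basic facts on codegree}.

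Next I reduce to the case $N\cong\mathsf{C}_p$ by induction on $|G|$. Supposing $N>1$, choose a minimal normal subgroup $M\le N$; it is an elementary abelian $r$-group with $r\in\{p,q\}$, and Lemma \ref{lem: kernel, abelian, cod} forces $|M|$ to divide an element of $\{p,q,pq\}$, so $|M|=r$; say $r=p$. Since the nilpotent residual of $G/M$ is $N/M$ and $\mathrm{cod}(G/M)=\{1,p,q,pq\}$ still, induction applied to $G/M$ forces $N/M=1$; hence $N=M\cong\mathsf{C}_p$. Consequently $G'=N$, and $G/\mathbf{C}_G(N)\hookrightarrow\mathrm{Aut}(N)\cong\mathsf{C}_{p-1}$ is nontrivial (otherwise $G$ would be nilpotent of class $\le 2$, contradicting $N>1$); since $P_1\in\mathrm{Syl}_p(G)$ centralizes $N$ and $G/\mathbf{C}_G(N)$ is a quotient of the elementary abelian $\bar Q$, I obtain $|G:\mathbf{C}_G(N)|=q$ and $q\mid p-1$.

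The hard part will be the structural analysis culminating in a direct decomposition. Any automorphism of $P_1$ trivial on $P_1/N$ restricts to the identity on $P_1'$, so the nontrivial $q$-action on $N$ forces $N\not\le P_1'=\Phi(P_1)$; combined with $N\le\mathbf{Z}(P_1)$, this yields $P_1$ elementary abelian. Writing $C=\mathbf{C}_G(N)=P_1\rtimes(Q_1\cap C)$ for $Q_1\in\mathrm{Syl}_q(G)$, each $z\in Q_1\cap C$ acts on $P_1$ trivially on both $N$ and $P_1/N$; since $z^q=1$ and $q$ is invertible modulo $p$, iterating shows the action is trivial, so $C$ is abelian. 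Taking $y$ a generator of $G/C\cong\mathsf{C}_q$, semisimplicity (since $q\mid p-1$) gives $P_1=N\oplus\mathbf{C}_{P_1}(y)$, and writing $Q_1=(Q_1\cap C)\times\langle y\rangle$ one extracts $G=H\times Z$, where $H=N\rtimes\langle y\rangle$ is a Frobenius group of order $pq$ and $Z=\mathbf{C}_{P_1}(y)\times(Q_1\cap C)\le\mathbf{Z}(G)$. Then picking $\alpha\in\mathrm{Irr}(H)$ of degree $q$ (faithful, with $\mathbf{Z}(\alpha)=1$ and codegree $p$) and $\beta\in\mathrm{Irr}(Z)$ linear of order $p$ (available since $|\mathbf{C}_{P_1}(y)|\ge p$), part (2) of Lemma \ref{lem: direct product, kernel and codegrees} yields $\mathrm{cod}(\alpha\times\beta)=p^2\notin\mathrm{cod}(G)$, the desired contradiction.
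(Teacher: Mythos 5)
Your argument is essentially the paper's: both directions, the identification $\mathrm{cod}(G)=\{1,p,q,pq\}$ with $G/N$ a product of two elementary abelian Sylows, the reduction (your induction on $|G|$ versus the paper's minimal counterexample) to $N=G'$ of prime order $p$, the splitting of $G$ as (Frobenius group of order $pq$) $\times$ (abelian part of order divisible by $p$), and the final contradiction via a character of codegree $p^{2}$ produced with part (2) of Lemma \ref{lem: direct product, kernel and codegrees}. The only real difference is cosmetic: the paper first shows $P\in\mathrm{Syl}_p(G)$ is elementary abelian (via $\Phi(P)\le N$ and coprime triviality on the Frattini quotient) and splits off a $G$-invariant Maschke complement $P_0$ of $N$, whereas you work with $\mathbf{C}_G(N)$ of index $q$ and a Fitting-type decomposition $P_1=N\times\mathbf{C}_{P_1}(y)$.

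One step is not justified as written: you assert $P_1'=\Phi(P_1)$ and conclude that $P_1$ is elementary abelian, but your commutator computation (an automorphism trivial on $P_1/N$ with $N\le\mathbf{Z}(P_1)$ fixes $P_1'$ pointwise) only yields $P_1'=1$, i.e.\ $P_1$ abelian; a priori $\Phi(P_1)=\mho^{1}(P_1)$ could still equal $N$, and "abelian plus $N\le\mathbf{Z}(P_1)$" does not give exponent $p$. Likewise "semisimplicity since $q\mid p-1$" is the wrong reason for $P_1=N\oplus\mathbf{C}_{P_1}(y)$ if $P_1$ is not elementary abelian. Fortunately this is harmless: for a coprime action on an \emph{abelian} group one always has $P_1=[P_1,\langle y\rangle]\times\mathbf{C}_{P_1}(\langle y\rangle)$, and $[P_1,\langle y\rangle]\le G'=N$ is nontrivial, so the decomposition (and everything downstream, which needs only $Z$ abelian with $p\mid |Z|$) survives. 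Alternatively, you can genuinely prove exponent $p$ by the same trick applied to $p$-th powers ($\sigma(x)=xn_x$ with $n_x$ central of order $p$ gives $\sigma(x^{p})=x^{p}$, so $y$ fixes $\mho^{1}(P_1)\le\Phi(P_1)\le N$, forcing $\mho^{1}(P_1)=1$), or by the paper's route: if $\Phi(P_1)=N$, coprime triviality of the $q$-action on $P_1/\Phi(P_1)$ would force $[P_1,Q_1]=1$, contradicting the nontrivial action on $N$.
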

\begin{proof}
  Assume that $G=P\times Q$ where $P$ is a nontrivial elementary abelian $p$-group, $Q$ is 
  a nontrivial elementary abelian $q$-group, and $p\neq q$.
  As $\mathrm{cod}(P)=\{ 1,p \}$ and $\mathrm{cod}(Q)=\{ 1,q \}$,
  we conclude by part (3) of Lemma \ref{lem: direct product, kernel and codegrees} that $\mathrm{cod}(G)=\{ 1,p,q,pq \}$.

  Assume that $|\mathrm{cod}(G)|=4$.
  Since $G/N$ is a nilpotent group with $|\pi(G/N)|>1$,
  it follows by part (3) of Lemma \ref{lem: direct product, kernel and codegrees} that $G/N$ is a direct product of an elementary abelian $p$-group
  and an elementary abelian $q$-group where $p\neq q$.
  Therefore, $N=G'$ and $\mathrm{cod}(G)=\mathrm{cod}(G/N)=\{ 1,p,q,pq \}$.
  
  We next show that $N=1$.
  Let $G$ be a counterexample of minimal possible order such that $N>1$.
 Then $N$ is minimal normal in $G$.
  By Lemma \ref{lem: kernel, abelian, cod}, $|N|\mid \mathrm{cod}(\chi)$ for some $\chi \in \mathrm{Irr}(G)$.
  Therefore, $|N|=p$ or $q$.
  Without loss of generality,
  we may assume that $|N|=p$.
  Then $G=P  \rtimes Q$ where $P\in \mathrm{Syl}_{p}(G)$ and $Q\in \mathrm{Syl}_{q}(G)$.
  Note that $P/N$ is elementary abelian,
  and so $\Phi(P)\leq N$.
  Since $Q$ acts coprimely on $P$, we conclude that $P$ is elementary abelian.
  In fact, otherwise, $\Phi(P)=N$; as $Q$ fixes every element in $P/\Phi(P)$, $[P,Q]=1$, a contradiction.
  So, $G=(N\times P_0) \rtimes Q$ where $P_0\unlhd G$.
  Let $C=\mathbf{C}_{Q}(N)$ and $Q_0$ a complement of $C$ in $Q$.
  Then $NQ_0$ is a Frobenius group with complement $Q_0\cong \mathsf{C}_{q}$ and kernel $N\cong \mathsf{C}_{p}$.
  Also, $G= NQ_0 \times (P_0\times C)$.
  Let $\theta$ be a nonlinear character in $\mathrm{Irr}(NQ_0)$ and $\beta\in \mathrm{Irr}(P_0)^\sharp$.
  Then $\chi=\theta \times (\beta\times 1_C) \in \mathrm{Irr}(G)$.
  Since $NQ_0$ is a Frobenius group and $\ker(\theta)=1$,
  $\mathbf{Z}(\theta)=\mathbf{Z}(NQ_0)=1$.
  So, it follows by part (2) of Lemma \ref{lem: direct product, kernel and codegrees} that 
  $\mathrm{cod}(\chi)=\mathrm{cod}(\theta)\mathrm{cod}(\beta\times 1_C)=\mathrm{cod}(\theta)\mathrm{cod}(\beta)\mathrm{cod}(1_C)=p^{2}$,
  a contradiction.
\end{proof}

Finally, we are able to prove Theorem \ref{thmA}.

\begin{proof}[Proof of Theorem \ref{thmA}]
	Assume that $|\mathrm{cod}(G)|=4$.
	If $G$ is nonsolvable, then \cite[Theorem]{liu2021} yields that $G\cong \mathrm{SL}_2(2^{f})$ where $f\geq 2$, i.e. (7) holds.
	Suppose that $G$ is solvable.
	Then $h(G)\leq 3$ by Lemma \ref{lem: qz}.
	Let $N$ be the nilpotent residual of $G$.
   If $|\pi(G/N)|\geq 2$, then part (1) holds by Lemma \ref{lem: nilpotent}.
	Assume next that $G/N$ is a $p$-group.
	If $h(G)=2$, then one of (2), (3), (4) or (5) holds by Theorems \ref{thm: height=2, |cod(G/N)|=3, classification} and \ref{thm: height=2, |cod(G/N)|=2, classification} and Proposition \ref{prop: height=2, |cod(G/N)|=2, C>1}.
	If $h(G)=3$, then part (6) holds by Theorem \ref{thm: height=3}. 
	
	Conversely, assume that one of (1), (2), (3), (4), (5), (6) or (7) holds. 
	Then we are done by Lemma \ref{lem: nilpotent}, Proposition \ref{prop: height=2, |cod(G/N)|=2, C>1},
	Theorems \ref{thm: height=2, |cod(G/N)|=3, classification}, \ref{thm: height=2, |cod(G/N)|=2, classification} and \ref{thm: height=3}, and \cite[Theorem]{liu2021}.
\end{proof}

For solvable groups $G$, the Isaacs-Seitz conjecture asserts that the derived length of $G$ is bounded by
the size of the set of character degrees, i.e.
$\mathrm{dl}(G)\leq |\mathrm{cd}(G)|$. 
This conjecture is still open in general, but it was settled
when $|\mathrm{cd}(G)|\leq 4$ (see \cite[Theorem 12.15]{isaacs1994} and \cite{garrison}).
A ``dual'' question arises concerning the set of character codegrees: for each solvable group $G$,
\begin{center}
	$\mathrm{dl}(G)\leq |\mathrm{cod}(G)|$.
\end{center}
Due to \cite{alizadeh2019,du2016}, this inequality can be verified when $|\mathrm{cod}(G)|\leq 3$.
However, the case $|\mathrm{cod}(G)|=4$ has remained stubbornly difficult.
In fact, we are only able to show the following result.

\begin{cor}
	Let $G$ be a solvable group with $|\mathrm{cod}(G)|=4$.
	Then one of the following holds.
	\begin{description}
		\item[(1)] $\mathrm{dl}(G)\leq |\mathrm{cod}(G)|$.
		\item[(2)] $|\mathrm{cd}(G)|>|\mathrm{cod}(G)|$, and 
		either $G$ is a group of prime-power order or $G$ satisfies (2f) of Theorem \ref{thmA}.
	\end{description}
\end{cor}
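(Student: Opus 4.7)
The plan is to argue by contrapositive: assume $\mathrm{dl}(G) > |\mathrm{cod}(G)| = 4$ and deduce that $|\mathrm{cd}(G)| > 4$ and that $G$ is either of prime-power order or lies in case~(2f) of Theorem~\ref{thmA}. Since case~(7) of Theorem~\ref{thmA} is nonsolvable, the analysis splits naturally into ``$G$ has prime-power order'' and ``$G$ does not'', and in the latter situation Theorem~\ref{thmA} places $G$ in one of (1)--(6).

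For $G$ of non-prime-power order I would walk through each structural case (1)--(6) of Theorem~\ref{thmA} and verify that, outside of (2f), one has $\mathrm{dl}(G) \leq 3$, which contradicts the contrapositive hypothesis and so forces $G$ into (2f). Explicitly: in (1) $G$ is abelian; in (2a)--(2e), (3) and (5), $G = N \rtimes H$ has $\mathrm{dl}(N), \mathrm{dl}(H) \leq 2$ (for instance the semi-extraspecial kernel of (3) has class~$2$, and for (2e) one reads $c(N) = 2$ off from the proof of Theorem~\ref{thm: height=2, |cod(G/N)|=2, classification}(1c) rather than from the statement of Theorem~\ref{thmA}); in (4a)--(4c) the Sylow $p$-subgroup has derived length~$2$ while $N$ is elementary abelian; and in (6) the normal series $1 \lhd V \lhd K \lhd G$ has all successive quotients abelian, giving $\mathrm{dl}(G) \leq 3$. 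Hence every non-prime-power, non-(2f) counterexample is ruled out.

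The remaining cases, $G$ a $p$-group and $G$ in Theorem~\ref{thmA}(2f), are then handled uniformly via the Isaacs-Seitz inequality, which is known for $|\mathrm{cd}| \leq 4$ by \cite[Theorem 12.15]{isaacs1994} and \cite{garrison}. For $p$-groups this is immediate: $|\mathrm{cd}(G)| \leq 4$ yields $\mathrm{dl}(G) \leq 4$, so the contrapositive gives $|\mathrm{cd}(G)| > 4$. For (2f), $G = N \rtimes P$ is Frobenius with $P \cong \mathsf{C}_p$ and $N$ a nonabelian $q$-group; the Frobenius action of $P$ on $\mathrm{Irr}(N)^{\sharp}$ forces $\theta^G \in \mathrm{Irr}(G)$ of degree $p\theta(1)$ for every nontrivial $\theta \in \mathrm{Irr}(N)$, so that
\[
\mathrm{cd}(G) = \{1\} \cup \{p\,d : d \in \mathrm{cd}(N)\}, \qquad |\mathrm{cd}(G)| = 1 + |\mathrm{cd}(N)|,
\]
and $G/N$ abelian gives $\mathrm{dl}(G) \leq \mathrm{dl}(N) + 1$. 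If $|\mathrm{cd}(G)| \leq 4$ then $|\mathrm{cd}(N)| \leq 3$, and Isaacs-Seitz applied to the $q$-group $N$ yields $\mathrm{dl}(N) \leq 3$, hence $\mathrm{dl}(G) \leq 4$; the contrapositive yields $|\mathrm{cd}(G)| > 4$, completing~(2). The main obstacle is the case-by-case derived-length bookkeeping for Theorem~\ref{thmA}(1)--(6), and especially case~(2e), where the bound $c(N) = 2$ must be extracted from the proof rather than read off from the statement of the theorem.
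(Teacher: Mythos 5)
Your proof is correct and takes essentially the same route as the paper: the paper likewise combines the Isaacs--Seitz bound for $|\mathrm{cd}(G)|\le 4$ (\cite[Theorem 12.15]{isaacs1994} plus \cite{garrison}) with Theorem \ref{thmA}, whose non-prime-power cases other than (2f) all visibly have derived length at most $3$; your contrapositive phrasing and explicit case check merely spell out what the paper leaves implicit. The only superfluous step is the detour through $N$ in case (2f), since once $|\mathrm{cd}(G)|\le 4$ one may apply Isaacs--Seitz to $G$ itself (and, incidentally, in case (2e) the bound $c(N)\le 2$ already follows from the statement, because $N''\le N'\cap\bigcap_{\theta}\ker(\theta^{G})=1$, so no appeal to the proof of Theorem \ref{thm: height=2, |cod(G/N)|=2, classification} is needed).
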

 \begin{proof}
     If $|\mathrm{cd}(G)|\leq |\mathrm{cod}(G)|$, then we are done by \cite[Theorem 12.15]{isaacs1994} and \cite{garrison}.
	 So, we may assume that $|\mathrm{cd}(G)|>|\mathrm{cod}(G)|$ and that $G$ is a group of non-prime-power order 
	 which does not satisfy (2f) of Theorem \ref{thmA}.
	 Applying Theorem \ref{thmA}, we conclude that $\mathrm{dl}(G)\leq |\mathrm{cod}(G)|$.
 \end{proof}

\begin{acknowledgement}
	The authors are grateful to the referee for her/his
 valuable comments.
\end{acknowledgement}

\end{document}